\providecommand{\U}[1]{\protect\rule{.1in}{.1in}}
\newtheorem{theorem}{Theorem}
\theoremstyle{plain}
\newtheorem{corollary}{Corollary}[section]
\newtheorem{definition}{Definition}[section]
\newtheorem{lemma}{Lemma}[section]
\newtheorem{remark}{Remark}[section]
\numberwithin{equation}{section}
\numberwithin{theorem}{section}
\begin{document}
\title[Classical and Sobolev Orthogonality of the Nonclassical Jacobi Polynomials]{Classical and Sobolev Orthogonality of the Nonclassical Jacobi Polynomials
with Parameters $\alpha=\beta=-1$}
\author{Andrea Bruder}
\address{Department of Mathematics and Computer Science, Colorado College, Tutt Science
Center, 14 E. Cache la Poudre St., Colorado Springs, CO 80903}
\email{Andrea.Bruder@ColoradoCollege.edu}
\author{Lance L. Littlejohn}
\address{Department of Mathematics, Baylor University, One Bear Place \#97328, Waco, TX 76798-7328}
\email{Lance\_Littlejohn@baylor.edu}
\date{May 22, 2012 (BL2v4.tex)}
\dedicatory{We dedicate this paper to the memory of our teacher, mentor, and friend
Professor W. N. Everitt (1924-2011)}\keywords{Jacobi polynomials, orthogonality, Sobolev space, self-adjoint operator,
left-definite operator theory}

\begin{abstract}
In this paper, we consider the second-order differential expression%
\[
\ell\lbrack y](x)=(1-x^{2})(-(y^{\prime}(x))^{\prime}+k(1-x^{2})^{-1}%
y(x))\quad(x\in(-1,1)).
\]
This is the Jacobi differential expression with non-classical parameters
$\alpha=\beta=-1$ in contrast to the classical case when $\alpha,\beta>-1$.
\ For fixed $k\geq0$ and appropriate values of the spectral parameter
$\lambda,$ the equation $\ell\lbrack y]=\lambda y$ has, as in the classical
case, a sequence of (Jacobi) polynomial solutions $\{P_{n}^{(-1,-1)}%
\}_{n=0}^{\infty}.$ These Jacobi polynomial solutions of degree $\geq2$ form a
complete orthogonal set in the Hilbert space $L^{2}((-1,1);(1-x^{2})^{-1}).$
Unlike the classical situation, \textit{every} polynomial of degree one is a
solution of this eigenvalue equation. Kwon and Littlejohn showed that, by
careful selection of this first degree solution, the set of polynomial
solutions of degree $\geq0$ are orthogonal with respect to a Sobolev inner
product. Our main result in this paper is to construct a self-adjoint operator
$T,$ generated by $\ell\lbrack\cdot],$ in this Sobolev space that has these
Jacobi polynomials as a complete orthogonal set of eigenfunctions. The
classical Glazman-Krein-Naimark theory is essential in helping to construct
$T$ in this Sobolev space as is the left-definite theory developed by
Littlejohn and Wellman.

\end{abstract}
\maketitle

\section{Introduction\label{Introduction}}

For $\alpha,\beta>-1,$ the spectral properties of the classical Jacobi
differential expression%
\begin{align}
\ell_{\alpha,\beta}[y](x)  &  :=\frac{1}{\omega_{\alpha,\beta}(x)}\left[
\left(  \left(  -(1-x)^{\alpha+1}(1+x)^{\beta+1}\right)  y^{\prime}(x)\right)
^{\prime}+k(1-x)^{\alpha}(1+x)^{\beta}y(x)\right] \label{Jacobi alpha,beta}\\
&  =-(1-x^{2})y^{\prime\prime}(x)+(\alpha-\beta+(\alpha+\beta+2)x)y^{\prime
}(x)+ky(x)\nonumber
\end{align}
where $k\geq0$ is fixed, $x\in(-1,1)$ and
\begin{equation}
\omega_{\alpha,\beta}(x):=(1-x)^{\alpha}(1+x)^{\beta}, \label{Jacobi weight}%
\end{equation}
are well understood. In this case, the $n^{th}$ degree Jacobi polynomial
$y=P_{n}^{(\alpha,\beta)}(x)$ is a solution of the equation%
\begin{equation}
\ell_{\alpha,\beta}[y](x)=\left(  n(n+\alpha+\beta+1)+k\right)  y(x)\qquad
(n\in%
\mathbb{N}
_{0}); \label{Jacobi Eigenvalue Equation}%
\end{equation}
details of the properties of these polynomials can be found in the classic
texts \cite{Chihara} and \cite{Szego}. The right-definite spectral analysis
has been studied in \cite{Akhiezer-Glazman} and \cite{EKLWY}. Through the
Glazman-Krein-Naimark (GKN) theory \cite{Naimark}, it has been known that
there exists a self-adjoint operator $A^{(\alpha,\beta)},$ generated from the
Jacobi differential expression (\ref{Jacobi alpha,beta}), in the Hilbert space
$L^{2}((-1,1);w_{\alpha,\beta})$ of Lebesgue measurable functions
$f:(-1,1)\rightarrow\mathbb{C}$ satisfying%
\[
\left\Vert f\right\Vert _{\alpha,\beta}^{2}:=\int_{-1}^{1}\left\vert
f(x)\right\vert ^{2}w_{\alpha,\beta}(x)dx<\infty
\]
which has the Jacobi polynomials as a complete set of eigenfunctions.

For $\alpha,\beta\geq-1,$ let
\begin{equation}
L_{\alpha,\beta}^{2}(-1,1):=L^{2}((-1,1);w_{\alpha,\beta}) \label{L^2 spaces}%
\end{equation}
be the weighted Hilbert space with usual inner product%
\begin{equation}
(f,g)_{\alpha,\beta}=\int_{-1}^{1}f(x)\overline{g}(x)w_{\alpha,\beta}(x)dx
\label{Jacobi IP}%
\end{equation}
and related norm $\left\Vert \cdot\right\Vert _{\alpha,\beta}=(\cdot
,\cdot)_{\alpha,\beta}^{1/2}.$ In this paper, we will study the spectral
theory of the Jacobi expression (\ref{Jacobi alpha,beta}) in $L_{-1,-1}%
^{2}(-1,1)$ (that is when $\alpha=\beta=-1)$ as well as in several other
Hilbert spaces in which the associated Jacobi polynomials are orthogonal.

For $\alpha,\beta>-1$ and $n\in\mathbb{N}_{0,}$ the Jacobi polynomial
$P_{n}^{(\alpha,\beta)}(x)$ of degree $n$ is defined (see \cite{Chihara},
\cite{Rainville}, \cite[Chapter IV]{Szego}) to be any non-zero multiple of
\begin{equation}
P_{n}^{(\alpha,\beta)}(x):=\sum\limits_{j=0}^{n}\binom{n+\alpha}{j}%
\binom{n+\beta}{n-j}\left(  \dfrac{x-1}{2}\right)  ^{j}\left(  \dfrac{x+1}%
{2}\right)  ^{n-j}; \label{Jacobi polynomial alpha, beta}%
\end{equation}
it is well known, in this case, that the Jacobi polynomials $\{P_{n}%
^{(\alpha,\beta)}\}_{n=0}^{\infty}$ form a complete orthogonal set in
$L_{\alpha,\beta}^{2}(-1,1).$ From (\ref{Jacobi polynomial alpha, beta}),
notice that
\[
P_{0}^{(\alpha,\beta)}(x)=1\text{ and }P_{1}^{(\alpha,\beta)}(x)=\alpha
+1+(\alpha+\beta+2)\frac{(1-x)}{2}%
\]
so, in particular from this definition, we see that $P_{1}^{(-1-1)}%
(x)\equiv0.$ Furthermore,
\[
P_{0}^{(-1,-1)}(x)\notin L_{-1,-1}^{2}(-1,1)
\]
because of the singularities in the weight function $w_{-1,-1}(x)=(1-x^{2}%
)^{-1}.$ However, as we will see, it is the case that $\{P_{n}^{(-1,-1)}%
\}_{n=2}^{\infty}$ does form a complete orthogonal set in $L_{-1,-1}%
^{2}(-1,1).$ In this weighted Hilbert space, we will apply the
Glazman-Krein-Naimark (GKN) theory \cite[Chapter IV]{Naimark} to construct the
(unique) self-adjoint operator $A=A^{(-1,-1)},$ generated by $\ell
_{-1,-1}[\cdot],$ having $\{P_{n}^{(-1,-1)}\}_{n=2}^{\infty}$ as
eigenfunctions. As the reader will see, this operator $A$ will be key to
subsequent analysis that we develop.

When $\alpha=\beta=-1,$ every first degree polynomial is a solution of
(\ref{Jacobi Eigenvalue Equation})$.$ Kwon and Littlejohn
\cite{Kwon-Littlejohn} showed that, by careful choice of this first degree
polynomial, the entire sequence $\{P_{n}^{(-1,-1)}\}_{n=0}^{\infty}$ forms an
orthogonal set in a certain Sobolev space $W$ generated by the Sobolev inner
product
\begin{equation}
\phi\left(  f,g\right)  :=\frac{1}{2}f(-1)\overline{g}(-1)+\frac{1}%
{2}f(1)\overline{g}(1)+\int_{-1}^{1}f^{\prime}(t)\overline{g}^{\prime}(t)dt.
\label{Sobolev IP}%
\end{equation}
Moreover, in fact, these polynomials form a complete orthogonal set in $W.$ We
note that, by Favard's Theorem, the entire set $\{P_{n}^{(-1,-1)}%
\}_{n=0}^{\infty},$ for any choice of the first degree polynomial
$P_{1}^{(-1,-1)}(x),$ cannot be orthogonal on the real line with respect to a
measure, signed or otherwise.

The main part of this paper is, however, to construct a self-adjoint operator
$T$, generated by $\ell\lbrack\cdot],$ in $W$ that has the Jacobi polynomials
$\{P_{n}^{(-1,-1)}\}_{n=0}^{\infty}$ as eigenfunctions. The GKN theory, as
well as the general left-definite operator theory developed by Littlejohn and
Wellman \cite{Littlejohn-Wellman}, is of paramount importance in the
construction of this self-adjoint operator.

We note that, for $m\in\mathbb{N},$ the Jacobi polynomials $\{P_{n}%
^{(\alpha,-m)}\}_{n=0}^{\infty}$ are orthogonal with respect to inner products
of the form (\ref{Sobolev IP}) but whose integrand involves the $m^{th}$
derivative of the functions. In this respect, we refer the reader to
\cite{Alfaro et al}, \cite{Alfaro et al II}, \cite{Alvarez},
\cite{Kwon-Lee-Littlejohn}, and \cite{Kwon-Littlejohn} where general results
on the Sobolev orthogonality of the Jacobi or Gegenbauer polynomials, when one
or both parameters $\alpha$ and $\beta$ are negative integers, are obtained.
Bruder and Littlejohn \cite{BruderLittlejohn} developed the spectral theory
when $\alpha>-1$ and $\beta=-1.$ The analysis in \cite{BruderLittlejohn} is
similar in some respects to some of the results of this paper but, overall,
quite different; whenever possible, we omit proofs which are similar to those
given in \cite{BruderLittlejohn}.

The contents of this paper are as follows. In Section \ref{Preliminaries}, we
discuss several well known properties of the Jacobi polynomials that will be
useful for subsequent analysis. Section \ref{Operator Inequality} deals with
an important operator inequality \cite{Chisholm-Everitt-Littlejohn} that is
essential for much of the hard analytic results that we develop. The classical
GKN theory is used in Section \ref{Right Definite Analysis} to construct the
self-adjoint operator $A$ in $L^{2}((-1,1);(1-x^{2})^{-1});$ in this section,
we also prove several properties of functions in the domain $\mathcal{D}(A)$
of $A$ that are necessary later in the paper. A short review of the general
left-definite theory developed by Littlejohn and Wellman is given in Section
\ref{Left-definite theory}. It is remarkable that this theory is important in
developing the spectral theory of the Jacobi expression $\ell\lbrack\cdot]$ in
the Sobolev space $W,$ whose properties we develop in Section
\ref{Sobolev Spectral Theory}. In Section \ref{Left-definite Jacobi}, the
left-definite theory of $\ell\lbrack\cdot]$ is developed. Results from this
section are then used to construct the self-adjoint operator $T$ in $W$ in
Section \ref{Sobolev Spectral Theory}.

\section{Preliminaries: Classical Properties of the Jacobi
Polynomials\label{Preliminaries}}

For $\alpha,\beta>-1,$ it is convenient for later purposes to define the
Jacobi polynomials $\{P_{n}^{(\alpha,\beta)}\}_{n=0}^{\infty}$ as%
\begin{equation}
P_{n}^{(\alpha,\beta)}(x):=k_{n}^{\alpha,\beta}\sum\limits_{j=0}^{n}%
\frac{(1+\alpha)_{n}(1+\alpha+\beta)_{n+j}}{j!(n-j)!(1+\alpha)_{j}%
(1+\alpha+\beta)_{n}}\left(  \frac{1-x}{2}\right)  ^{j},
\label{Jacobi polynomial definition a,b}%
\end{equation}

\noindent where
\[
k_{n}^{\alpha,\beta}:=\frac{(n!)^{1/2}(1+\alpha+\beta+2n)^{1/2}(\Gamma
(\alpha+\beta+n+1))^{1/2}}{2^{(\alpha+\beta+1)/2}(\Gamma(\alpha+n+1))^{1/2}%
(\Gamma(\beta+n+1))^{1/2}}.
\]
With this normalization, these Jacobi polynomials form a complete
\textit{orthonormal} set in $L_{\alpha,\beta}^{2}(-1,1).$

Their derivatives satisfy the identity%
\begin{equation}
\frac{d^{j}}{dx^{j}}P_{n}^{(\alpha,\beta)}(x)=a^{(\alpha,\beta)}%
(n,j)P_{n-j}^{(\alpha+j,\beta+j)}(x)\quad(n,j\in%
\mathbb{N}
_{0}), \label{(1) Colorado}%
\end{equation}
where%
\[
a^{(\alpha,\beta)}(n,j)=\frac{(n!)^{1/2}\left(  \Gamma(\alpha+\beta
+n+1+j)\right)  ^{1/2}}{((n-j)!)^{1/2}\left(  \Gamma(\alpha+\beta+n+1)\right)
^{1/2}}\quad(j=0,1,...,n),
\]
and $a^{(\alpha,\beta)}(n,j)=0$ if $j>n$. Furthermore, for $n,r,j\in
\mathbb{N}_{0},$ we have the orthogonality relation%
\begin{equation}
\int_{-1}^{1}\frac{d^{j}\left(  P_{n}^{(\alpha,\beta)}(x)\right)  }{dx^{j}%
}\frac{d^{j}\left(  P_{r}^{(\alpha,\beta)}(x)\right)  }{dx^{j}}w_{\alpha
+j,\beta+j}(x)dx=\frac{n!\Gamma(\alpha+\beta+n+1+j)}{(n-j)!\Gamma(\alpha
+\beta+n+1)}\delta_{n,r}. \label{(4) Colorado}%
\end{equation}

For $\alpha=\beta=-1,$ the `natural' setting for an analytical study of the
Jacobi polynomials is the Hilbert space $L_{-1,-1}^{2}(-1,1).$ However,
because of the singularities in the associated orthogonalizing weight function
$w_{-1,-1}(x)=(1-x^{2})^{-1}$, only the Jacobi polynomials $P_{n}%
^{(-1,-1)}(x)$ of degree $n\geq2$ belong to this space; see Section
\ref{Sobolev Spectral Theory}.

The following result is well known and can be found, for example, in
\cite[Chapter III, Theorem 3.1.5]{Szego}.

\begin{lemma}
\label{complete jacobi -1}The sequence $\left\{  P_{n}^{(1,1)}(x)\right\}
_{n=0}^{\infty}$ forms a complete orthogonal set in the Hilbert space
$L_{1,1}^{2}(-1,1).$
\end{lemma}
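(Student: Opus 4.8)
The statement is the special case $\alpha=\beta=1$ of the general classical fact recalled earlier in the excerpt, so the plan is essentially to quote that fact and check the hypothesis. First I would note that the parameters $\alpha=\beta=1$ satisfy $\alpha,\beta>-1$, so everything in Section~\ref{Preliminaries} applies verbatim with this choice. In particular, orthogonality is immediate: taking $j=0$ in the orthogonality relation \eqref{(4) Colorado} with $\alpha=\beta=1$ gives
\[
\int_{-1}^{1}P_n^{(1,1)}(x)\,P_r^{(1,1)}(x)\,w_{1,1}(x)\,dx=\frac{n!\,\Gamma(n+3)}{n!\,\Gamma(n+3)}\,\delta_{n,r}=\delta_{n,r},
\]
so the sequence is in fact orthonormal in $L^2_{1,1}(-1,1)$ with the normalization \eqref{Jacobi polynomial definition a,b}. (If one prefers the unnormalized polynomials \eqref{Jacobi polynomial alpha, beta}, orthogonality still holds up to a nonzero constant, which is all that is needed.)

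For completeness, the cleanest route is to invoke the standard theorem on completeness of classical orthogonal polynomials in their weighted $L^2$ space: since $w_{1,1}(x)=(1-x)(1+x)=1-x^2$ is a nonnegative, integrable weight on the bounded interval $(-1,1)$ all of whose moments $\int_{-1}^1 x^m (1-x^2)\,dx$ are finite, the polynomials are dense in $L^2((-1,1);w_{1,1})$; this is exactly the content of \cite[Chapter III, Theorem 3.1.5]{Szego} cited just above the lemma. Concretely, the argument behind that theorem is: a bounded interval together with a weight having all moments finite is a determinate moment problem, the polynomials are dense in $C[-1,1]$ by Weierstrass, $C[-1,1]$ is dense in $L^2((-1,1);w_{1,1})$, and Gram--Schmidt applied to $\{1,x,x^2,\dots\}$ produces $\{P_n^{(1,1)}\}$ (up to constants); hence the orthonormal system $\{P_n^{(1,1)}\}$ is complete.

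There is really no substantive obstacle here — this lemma is purely a bookkeeping step recording the classical result for the specific parameter pair $(1,1)$, which is needed later because differentiation shifts parameters via \eqref{(1) Colorado} (e.g. $\frac{d}{dx}P_n^{(0,0)}$ is a multiple of $P_{n-1}^{(1,1)}$, linking the Sobolev inner product \eqref{Sobolev IP} to $L^2_{1,1}(-1,1)$). The only point deserving a word of care is that the weight $w_{1,1}$, unlike $w_{-1,-1}$, is bounded on $(-1,1)$, so none of the delicacy about polynomials failing to lie in the space arises; every polynomial belongs to $L^2_{1,1}(-1,1)$, and the quoted classical theorem applies without modification.
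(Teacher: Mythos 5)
Your proposal is correct and is essentially the paper's approach: the paper offers no proof of Lemma \ref{complete jacobi -1} at all, simply citing \cite[Chapter III, Theorem 3.1.5]{Szego}, and your argument (orthogonality from (\ref{(4) Colorado}) with $j=0$, completeness via Weierstrass density of polynomials in $C[-1,1]$ and the boundedness/integrability of the weight $w_{1,1}$) is just the standard proof behind that citation. One incidental remark: the lemma is actually used in the paper not through the derivative parameter shift (\ref{(1) Colorado}) you mention, but through the isometry $f\mapsto(1-x^{2})^{-1}f$ between $L_{-1,-1}^{2}(-1,1)$ and $L_{1,1}^{2}(-1,1)$ in the proof of Lemma \ref{completeness -1-1}; this does not affect the correctness of your proof.
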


From this lemma, we prove the following result.

\begin{lemma}
\label{completeness -1-1}The sequence $\left\{  P_{n}^{(-1,-1)}(x)\right\}
_{n=2}^{\infty}$ forms a complete orthogonal set in the Hilbert space
$L_{-1,-1}^{2}(-1,1).$ Equivalently, the set of all polynomials $\mathcal{P}%
_{-1}[-1,1]$ of degree $\geq2$ satisfying $p(\pm1)=0$ is dense in
$L_{-1,-1}^{2}(-1,1)$. In particular,
\begin{equation}
P_{n}^{(-1,-1)}(\pm1)=0\text{\quad}(n\geq2).\label{Value at -1 and +1}%
\end{equation}

\end{lemma}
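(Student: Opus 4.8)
The plan is to deduce the $\alpha=\beta=-1$ statement from the known $\alpha=\beta=1$ statement (Lemma~\ref{complete jacobi -1}) by exploiting the derivative identity \eqref{(1) Colorado} with $j=2$, which links $P_{n}^{(-1,-1)}$ to $P_{n-2}^{(1,1)}$. First I would record the elementary observation that, since $w_{-1,-1}(x)=(1-x^{2})^{-1}$ has non-integrable singularities at $\pm1$, a polynomial $p$ lies in $L_{-1,-1}^{2}(-1,1)$ if and only if $p(1)=p(-1)=0$; this already forces $\deg p\geq 2$ for a nonzero such $p$, and it also establishes \eqref{Value at -1 and +1} once we know $P_{n}^{(-1,-1)}\in L_{-1,-1}^{2}(-1,1)$ for $n\ge 2$. (The membership itself follows by writing $P_n^{(-1,-1)}(x) = (1-x^2)q_{n-2}(x)$ for a polynomial $q_{n-2}$ of degree $n-2$, which one reads off from \eqref{(1) Colorado}: integrating $P_{n-2}^{(1,1)}$ twice against the appropriate weights produces exactly a multiple of $(1-x^2)$ times a polynomial.)

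Next I would set up the unitary-type correspondence. The map $p\mapsto \frac{d^2}{dx^2}p$ (or, going the other way, double integration) sends the space $\mathcal{P}_{-1}[-1,1]$ of polynomials of degree $\ge 2$ vanishing at $\pm 1$ onto all polynomials, and by \eqref{(4) Colorado} with $j=2$ it intertwines the inner product $(\cdot,\cdot)_{-1,-1}$ with a fixed positive multiple of $(\cdot,\cdot)_{1,1}$ on the images — more precisely $(P_n^{(-1,-1)}, P_r^{(-1,-1)})_{-1,-1}$ is, up to the constant $a^{(-1,-1)}(n,2)^{-2}$ absorbed into normalization, the same as $(P_{n-2}^{(1,1)},P_{r-2}^{(1,1)})_{1,1}$. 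This simultaneously gives the orthogonality of $\{P_n^{(-1,-1)}\}_{n=2}^\infty$ in $L_{-1,-1}^2(-1,1)$ (from the orthogonality of $\{P_m^{(1,1)}\}_{m=0}^\infty$ in $L_{1,1}^2(-1,1)$) and reduces the completeness claim to a density statement.

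For completeness, I would argue as follows. Suppose $f\in L_{-1,-1}^2(-1,1)$ is orthogonal to every $P_n^{(-1,-1)}$ with $n\ge 2$; equivalently $f$ is orthogonal in $L_{-1,-1}^2$ to every polynomial of degree $\ge 2$ vanishing at $\pm 1$, i.e. to $(1-x^2)\,r(x)$ for every polynomial $r$. Then $\int_{-1}^1 f(x)\overline{r(x)}\,dx = \int_{-1}^1 f(x)(1-x^2)\overline{r(x)}\,(1-x^2)^{-1}dx = 0$ for all polynomials $r$. Since $f\in L_{-1,-1}^2(-1,1)$ implies $f\in L^1(-1,1)$ (indeed $|f| = |f|(1-x^2)^{-1/2}\cdot (1-x^2)^{1/2}$ and Cauchy--Schwarz with $(1-x^2)^{1/2}\in L^2(-1,1)$), and actually $f\cdot(1-x^2)^{-1/2}\in L^2(-1,1)$ so $f\in L^2_{w}$ for a weight large enough that polynomials are dense, the vanishing of all these moments forces $f=0$ a.e. Concretely, I would note $g:=f\cdot(1-x^2)^{-1}\in L_{1,1}^2(-1,1)$? — no; rather the cleanest route is: $f$ orthogonal to all $(1-x^2)r$ in $L^2_{-1,-1}$ means $\langle f, r\rangle_{L^2(dx)}=0$ for all polynomials $r$, and $f\in L^1(-1,1)$, so by the completeness of polynomials in $L^1$ (or by the fact that $\int f \overline r\,dx = 0$ for all $r$ together with $f$ integrable on a bounded interval implies $f=0$ via the Stone--Weierstrass/Hahn--Banach argument), $f=0$. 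Hence $\{P_n^{(-1,-1)}\}_{n=2}^\infty$ is complete.

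The main obstacle is the density step: one must be careful that a function in $L^2_{-1,-1}(-1,1)$ is integrable enough on $(-1,1)$ for ``all polynomial moments vanish'' to imply ``the function is zero.'' The weight $(1-x^2)^{-1}$ blows up, so $L^2_{-1,-1}$ sits \emph{inside} $L^1(-1,1)$ (this needs the short Cauchy--Schwarz estimate above), and on a bounded interval vanishing of all moments of an $L^1$ function does force it to vanish a.e.; alternatively, transport the problem through the double-integration isometry to $L_{1,1}^2(-1,1)$ and invoke Lemma~\ref{complete jacobi -1} directly, which is the slicker packaging and the one I would ultimately write up. Everything else — membership, the explicit factor $(1-x^2)$, orthogonality, and \eqref{Value at -1 and +1} — is a direct consequence of \eqref{(1) Colorado} and \eqref{(4) Colorado}.
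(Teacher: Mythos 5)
Your argument is correct in substance, but your completeness step runs along a genuinely different track from the paper's. The paper uses no orthogonal-complement/moment argument at all: it observes that $f\mapsto(1-x^{2})^{-1}f$ is an isometry of $L_{-1,-1}^{2}(-1,1)$ onto $L_{1,1}^{2}(-1,1)$ (since $\left\Vert f\right\Vert _{-1,-1}=\left\Vert (1-x^{2})^{-1}f\right\Vert _{1,1}$), approximates $(1-x^{2})^{-1}f$ by a polynomial $q$ in $L_{1,1}^{2}$ via Lemma~\ref{complete jacobi -1}, and then notes that $p=(1-x^{2})q\in\mathcal{P}_{-1}[-1,1]$ approximates $f$ in $L_{-1,-1}^{2}$; this proves density of $\mathcal{P}_{-1}[-1,1]$ directly. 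You instead show the orthogonal complement of $\mathrm{span}\{P_{n}^{(-1,-1)}\}_{n\geq2}$ is trivial: orthogonality to all $(1-x^{2})r$ in $L_{-1,-1}^{2}$ gives $\int_{-1}^{1}f\overline{r}\,dx=0$ for all polynomials $r$, and since $L_{-1,-1}^{2}(-1,1)\subset L^{1}(-1,1)$ (your Cauchy--Schwarz step), Weierstrass approximation together with $L^{1}$--$C[-1,1]$ duality forces $f=0$. That is a valid, somewhat more elementary route which bypasses Lemma~\ref{complete jacobi -1} entirely; what the paper's multiplication trick buys is brevity and the immediate equivalence with density of $\mathcal{P}_{-1}[-1,1]$, with no appeal to $L^{1}$ duality. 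Note also that the ``double-integration isometry'' you mention as the slicker write-up should really be this multiplication map: double integration is not an isometry between these two weighted spaces.

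One genuine soft spot: your membership and orthogonality claims, and (\ref{Value at -1 and +1}) itself, all hinge on the factorization $P_{n}^{(-1,-1)}(x)=c_{n}(1-x^{2})P_{n-2}^{(1,1)}(x)$ with $c_{n}\neq0$, which you propose to read off from (\ref{(1) Colorado}) with $j=2$ by integrating twice. As stated, that identity determines $P_{n}^{(-1,-1)}$ only up to an additive linear polynomial, so it cannot by itself yield the factorization (and the identities (\ref{(1) Colorado}), (\ref{(4) Colorado}) are stated in Section \ref{Preliminaries} for $\alpha,\beta>-1$, so using them at $\alpha=\beta=-1$ needs justification). The clean fix is a direct computation from the explicit representation (\ref{Jacobi polynomial alpha, beta}), which is exactly how the paper obtains the factorization (citing Bruder's thesis); once that is in place, the rest of your argument goes through, and the $(\cdot,\cdot)_{-1,-1}$-orthogonality follows from the factorization alone without invoking (\ref{(4) Colorado}).
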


\begin{proof}
Note that%
\[
\int_{-1}^{1}\left\vert f(x)\right\vert ^{2}(1-x^{2})^{-1}dx=\int_{-1}%
^{1}\left\vert (1-x^{2})^{-1}f(x)\right\vert ^{2}(1-x^{2})dx,
\]
i.e. $f\in L^{2}\left(  (-1,1);(1-x^{2})^{-1}\right)  \iff(1-x^{2})^{-1}f\in
L^{2}\left(  (-1,1);(1-x^{2})\right)  ,$ and in this case,%
\begin{equation}
\left\Vert f\right\Vert _{-1,-1}=\left\Vert (1-x^{2})^{-1}f\right\Vert
_{1,1}.\label{Completeness -1-1 1}%
\end{equation}
Let $f\in L^{2}\left(  (-1,1);(1-x^{2})^{-1}\right)  ,$ and let $\epsilon>0.$
Hence%
\[
(1-x^{2})^{-1}f\in L^{2}\left(  (-1,1);(1-x^{2})\right)  ,
\]
so by Lemma \ref{complete jacobi -1}, there exists a polynomial $q(x)\ $with%
\[
\left\Vert (1-x^{2})^{-1}f-q\right\Vert _{1,1}<\epsilon.
\]
Let $p(x)=(1-x^{2})q(x)$ so $q(x)=(1-x^{2})^{-1}p(x).$ Then $p$ is a
polynomial of degree $\geq2,$ with $p(\pm1)=0,$ and%
\begin{align*}
\epsilon &  >\left\Vert (1-x^{2})^{-1}f-(1-x^{2})^{-1}p\right\Vert _{1,1}\\
&  =\left\Vert (1-x^{2})^{-1}(f-p)\right\Vert _{1,1}\\
&  =\left\Vert f-p\right\Vert _{-1,-1}\text{ by (\ref{Completeness -1-1 1}).}%
\end{align*}
Lastly, to establish $($\ref{Value at -1 and +1}$),$ we note that it is
straightforward to see, from the definition in $($%
\ref{Jacobi polynomial alpha, beta}$),$ that $P_{n}^{(-1,-1)}(x)$ is a
non-zero multiple of $(1-x^{2})P_{n-2}^{(1,1)}(x);$ details can be found in
\cite[Lemma 5.4]{Bruder}. We note that there is a similar situation with the
Laguerre polynomials $\{L_{n}^{\alpha}\}$ at $x=0$ when $\alpha$ is a negative
integer; see \cite[p. 102]{Szego}.
\end{proof}

\section{An Operator Inequality\label{Operator Inequality}}

The following result, due to Chisholm and Everitt \cite{Chisholm-Everitt}, is
important in establishing the main analytic results in this paper$;$ it has
been a remarkably useful tool in obtaining general properties of functions in
certain operator domains. Theorem \ref{CHEL} was extended to the general case
of conjugate indices $p$ and $q$ $(p,q>1)$ in
\cite{Chisholm-Everitt-Littlejohn} in 1999. Several years after publication,
the authors of \cite{Chisholm-Everitt-Littlejohn} learned that this result was
first established by Talenti \cite{Talenti} and Tomaselli \cite{Tomaselli},
both in 1969, and later by Muckenhoupt \cite{Muckenhoupt} in 1972.

\begin{theorem}
\label{CHEL}Suppose $I=(a,b)$ is an open interval of the real line, where
$-\infty\leq a<b\leq\infty.$ Suppose $w$ is a positive Lebesgue measurable
function on $(a,b)$ and $\varphi,\psi$ are functions satisfying the three conditions:

\begin{enumerate}
\item[(i)] $\varphi\in L_{\mathrm{loc}}^{2}(\,(a,b);w)$ and $\psi\in
L_{\mathrm{loc}}^{2}(\,(a,b);w);$

\item[(ii)] for some $c\in(a,b),$ $\varphi\in L^{2}(\,(a,c];w)$ and $\psi\in
L^{2}(\,[c,b);w);$

\item[(iii)] for all $[\alpha,\beta]\subset(a,b),$%
\[
\int_{a}^{\alpha}\left\vert \varphi(t)\right\vert ^{2}w(t)dt\,>0\text{ and
}\int_{\beta}^{b}\left\vert \psi(t)\right\vert ^{2}w(t)dt\,>0.
\]

\end{enumerate}

\noindent Define the linear operators $A$ and $B$ on $L^{2}(\,(a,b);w)$ and
$L^{2}(\,(a,b);w)$, respectively, by%
\[
(Ag)(x):=\varphi(x)\int_{x}^{b}\psi(t)g(t)w(t)dx\quad(\,x\in(a,b)\text{ and
}g\in L^{2}(\,(a,b);w)\,)
\]%
\[
(Bg)(x):=\psi(x)\int_{a}^{x}\varphi(t)g(t)w(t)dx\quad(\,x\in(a,b)\text{ and
}g\in L^{2}(\,(a,b);w)\,);
\]
then%
\begin{align*}
A  &  :L^{2}(\,(a,b);w)\rightarrow L_{l\mathrm{oc}}^{2}(\,(a,b):w)\\
B  &  :L^{2}(\,(a,b);w)\rightarrow L_{\mathrm{loc}}^{2}(\,(a,b);w).
\end{align*}

\noindent Define $K(\cdot):$ $(a,b)\rightarrow(0,\infty)$ by%
\[
K(x):=\left\{  \int_{a}^{x}\left\vert \varphi(t)\right\vert ^{2}%
w(t)dx\right\}  ^{1/2}\left\{  \int_{x}^{b}\left\vert \psi(t)\right\vert
^{2}w(t)dt\right\}  ^{1/2}\quad(\,x\in(a,b)\,)
\]
and the number $K\in(0,\infty]$%
\[
K:=\sup\{\,K(x)\mid x\in(a,b)\,\}.
\]

\noindent Then a necessary and sufficient condition that $A$ and $B$ are
bounded linear operators on $L^{2}(\,(a,b);w)$ is that the number $K$ is
finite, \textit{i.e.}%
\[
K\in(0,\infty).
\]
Furthermore, the following operator inequalities are valid:
\[
\left\Vert Af\right\Vert _{2}\leq2K\left\Vert f\right\Vert _{2}\quad(\,f\in
L^{2}(\,(a,b);w)\,)
\]%
\[
\left\Vert Bg\right\Vert _{2}\leq2K\left\Vert g\right\Vert _{2}\quad(\,g\in
L^{2}(\,(a,b);w)\,);
\]
the number $2K$ given in the above inequalities is best possible for these
inequalities to hold.
\end{theorem}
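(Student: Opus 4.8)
The plan is to establish the three assertions — sufficiency of $K<\infty$ for boundedness (with the constant $2K$), necessity, and sharpness of $2K$ — separately, after recording a few preliminary facts. Set
\[
\Phi(x):=\int_a^x|\varphi(t)|^2w(t)\,dt,\qquad \Psi(x):=\int_x^b|\psi(t)|^2w(t)\,dt\qquad(x\in(a,b)).
\]
Conditions (i)--(iii) make $\Phi$ and $\Psi$ finite, strictly positive and absolutely continuous on $(a,b)$, with $\Phi(x)\to0$ as $x\to a^+$, $\Psi(x)\to0$ as $x\to b^-$, $\Phi'=|\varphi|^2w$ and $\Psi'=-|\psi|^2w$ a.e.; moreover $K(x)^2=\Phi(x)\Psi(x)$, so ``$K<\infty$'' is precisely the pointwise bound $\Phi(x)\Psi(x)\le K^2$. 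That $A$ and $B$ are well defined and map $L^2((a,b);w)$ into $L^2_{\mathrm{loc}}((a,b);w)$ follows from condition (ii) together with the Cauchy--Schwarz inequality.

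For sufficiency I would treat $B$ first; the bound for $A$ then follows either from the reflection $x\mapsto a+b-x$ (which swaps the roles of $\varphi$ and $\psi$ and leaves the number $K$ unchanged) or by rerunning the same argument with $\Psi$ in place of $\Phi$. Pointwise $|(Bg)(x)|\le|\psi(x)|\int_a^x|\varphi||g|\,w$; writing the integrand as $\bigl(|\varphi|\Phi^{-1/4}\bigr)\bigl(|g|\Phi^{1/4}\bigr)$, applying Cauchy--Schwarz, and using $\int_a^x|\varphi|^2\Phi^{-1/2}w=\int_a^x\Phi'\Phi^{-1/2}=2\Phi(x)^{1/2}$ gives
\[
|(Bg)(x)|^2\le 2|\psi(x)|^2\Phi(x)^{1/2}\int_a^x|g(t)|^2\Phi(t)^{1/2}w(t)\,dt.
\]
Integrating against $w$, exchanging the order of integration by Tonelli's theorem, and then invoking $\Phi(x)^{1/2}\le K\Psi(x)^{-1/2}$ followed by $\int_t^b|\psi|^2\Psi^{-1/2}w=\int_t^b(-\Psi')\Psi^{-1/2}=2\Psi(t)^{1/2}$ yields
\begin{align*}
\|Bg\|_2^2 &\le 4K\int_a^b|g(t)|^2\Phi(t)^{1/2}\Psi(t)^{1/2}w(t)\,dt\\
&=4K\int_a^b|g(t)|^2K(t)\,w(t)\,dt\le 4K^2\|g\|_2^2,
\end{align*}
i.e.\ $\|Bg\|_2\le 2K\|g\|_2$, and symmetrically $\|Af\|_2\le2K\|f\|_2$.

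For necessity, suppose $\|Bg\|_2\le M\|g\|_2$ for all $g\in L^2((a,b);w)$, fix $x_0\in(a,b)$ and $\xi\in(a,x_0)$, and test with $g=\overline{\varphi}\,\mathbf{1}_{(a,\xi)}$, which lies in $L^2((a,b);w)$ with $\|g\|_2^2=\Phi(\xi)\in(0,\infty)$ by (ii) and (iii). Since $(Bg)(x)=\psi(x)\Phi(\xi)$ for $x>\xi$, one gets $\|Bg\|_2^2\ge\Phi(\xi)^2\Psi(x_0)$, hence $\Phi(\xi)\Psi(x_0)\le M^2$; letting $\xi\to x_0^-$ and using the continuity of $\Phi$ gives $K(x_0)^2\le M^2$, so $K\le M<\infty$. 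The mirror-image test function $\overline{\psi}\,\mathbf{1}_{(\xi,b)}$ handles $A$, and this argument in fact pins the operator norms of $A$ and $B$ between $K$ and $2K$.

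The main obstacle is the sharpness of the constant $2K$. The two Cauchy--Schwarz steps in the sufficiency argument are simultaneously saturated only by the formal choice $g\propto\overline{\varphi}\,\Phi^{-1/2}$ together with $K(x)\equiv K$ constant, and such a $g$ just fails to be in $L^2((a,b);w)$ because $\int|\varphi|^2\Phi^{-1}w=\int\Phi'/\Phi$ diverges logarithmically at the endpoints. The plan is therefore to pass to the truncations $g_\delta:=\overline{\varphi}\,\Phi^{-1/2}\mathbf{1}_{(a_\delta,b_\delta)}$ with $a_\delta\downarrow a$ and $b_\delta\uparrow b$ chosen so that $\|g_\delta\|_2^2=\log\bigl(\Phi(b_\delta)/\Phi(a_\delta)\bigr)\to\infty$, to work with a balanced configuration $(w,\varphi,\psi)$ realising (or approximating) $K(x)\equiv K$, and to show $\|Bg_\delta\|_2/\|g_\delta\|_2\to 2K$; the delicate part is bounding the ``defect'' in each Cauchy--Schwarz step over the truncated range and verifying that the boundary contributions from $a_\delta$ and $b_\delta$ are $o(\|g_\delta\|_2^2)$. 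This is exactly the computation carried out in \cite{Chisholm-Everitt-Littlejohn} (and earlier by Talenti, Tomaselli and Muckenhoupt), and I would follow that route.
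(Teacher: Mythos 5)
The paper itself offers no proof of Theorem \ref{CHEL}: it is quoted from Chisholm--Everitt and Chisholm--Everitt--Littlejohn (with the earlier proofs of Talenti, Tomaselli and Muckenhoupt), so there is no in-paper argument to compare yours against; judged on its own terms, your proposal follows the standard Muckenhoupt-type route and its two substantive directions are correct and essentially complete. For sufficiency, the factorization $|\varphi||g|=(|\varphi|\Phi^{-1/4})(|g|\Phi^{1/4})$, the identities $\int_a^x\Phi'\Phi^{-1/2}=2\Phi(x)^{1/2}$ and $\int_t^b(-\Psi')\Psi^{-1/2}=2\Psi(t)^{1/2}$ (valid since $\Phi,\Psi>0$ on $(a,b)$ by (iii) and $\Phi(a^+)=\Psi(b^-)=0$ by (ii)), Tonelli, and the pointwise bound $\Phi^{1/2}\Psi^{1/2}=K(\cdot)\le K$ do give $\Vert Bg\Vert_2\le 2K\Vert g\Vert_2$, and the same scheme (or a reflection, though $x\mapsto a+b-x$ only makes literal sense for bounded $(a,b)$, so rerunning the argument is the cleaner option you already allow for) gives the bound for $A$. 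Your necessity argument with the test functions $\overline{\varphi}\,\mathbf{1}_{(a,\xi)}$ and $\overline{\psi}\,\mathbf{1}_{(\xi,b)}$ is correct and yields $K\le\Vert B\Vert,\Vert A\Vert$; you could streamline it by taking $\xi=x_0$ at once, which gives $\Phi(x_0)\Psi(x_0)\le M^2$ without the limiting step. The only place you stop short of a proof is the sharpness clause. Note first that it must be read as a statement about the constant $2$ being unimprovable uniformly over admissible $(w,\varphi,\psi)$ (for a fixed configuration the norm can lie strictly between $K$ and $2K$, e.g.\ the Volterra operator on $(0,1)$ with $w=\varphi=\psi=1$, where $K=1/2$ but the norm is $2/\pi$). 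With that reading, your plan is the right one, and in fact it does not need to be outsourced: in the Hardy configuration $w\equiv1$, $\varphi\equiv1$, $\psi(x)=1/x$ on $(0,\infty)$ one has $K(x)\equiv1$, and your truncations $g=\Phi^{-1/2}\mathbf{1}_{(\delta,R)}=x^{-1/2}\mathbf{1}_{(\delta,R)}$ give $\Vert g\Vert_2^2=\log(R/\delta)$ while a two-line computation gives $\Vert Bg\Vert_2^2\ge 4\log(R/\delta)+O(1)$, so the ratio tends to $(2K)^2$. Writing out that short computation would turn your sketch of the ``best possible'' assertion into a complete proof; as it stands, that part is deferred to the cited sources, exactly as the paper itself defers the whole theorem.
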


\section{Right-Definite Spectral Analysis of the Jacobi Expression when
$\alpha=\beta=-1$\label{Right Definite Analysis}}

In the special case when $\alpha=\beta=-1,$ the Jacobi differential expression
(\ref{Jacobi alpha,beta}) simplifies to be%
\begin{align}
\ell\lbrack y](x):=\ell_{-1,-1}[y](x)  &  =(1-x^{2})\left(  \left(
-(y^{\prime}(x)^{\prime}+k(1-x^{2})^{-1}y(x\right)  \right)  \label{Jacobi DE}%
\\
&  =-(1-x^{2})y^{\prime\prime}(x)+ky(x)\quad(x\in(-1,1));\nonumber
\end{align}
here $k$ is a fixed, non-negative constant. The maximal domain associated with
$\ell\lbrack\cdot]$ in $L_{-1,-1}^{2}(-1,1)$ is%
\[
\Delta:=\left\{  f:(-1,1)\rightarrow%
\mathbb{C}
\mid f,f^{\prime}\in AC_{\mathrm{loc}}(-1,1);f,\ell\lbrack f]\in L_{-1,-1}%
^{2}(-1,1)\right\}  .
\]
Observe that if $f\in\Delta,$ then $(1-x^{2})f^{\prime\prime}\in$
$L_{-1,-1}^{2}(-1,1)$ or, equivalently%
\begin{equation}
(1-x^{2})^{1/2}f^{\prime\prime}\in L^{2}(-1,1)\quad(f\in\Delta);
\label{Maximal Domain 1}%
\end{equation}
moreover, $f\in L_{-1,-1}^{2}(-1,1)$ is equivalent to
\begin{equation}
(1-x^{2})^{-1/2}f\in L^{2}(-1,1)\quad(f\in\Delta). \label{Maximal Domain 2}%
\end{equation}
For $f,g\in\Delta$ and $[a,b]\subset(-1,1)$, we have \textit{Dirichlet's
formula}%
\begin{equation}
\int_{a}^{b}\ell\lbrack f](x)\overline{g}(x)(1-x^{2})^{-1}dx=-f^{\prime
}(x)\overline{g}(x)\mid_{a}^{b}+\int_{a}^{b}\left[  f^{\prime}(x)\overline
{g}^{\prime}(x)+k(1-x^{2})^{-1}f(x)\overline{g}(x)\right]  dx
\label{Dirichlet's formula}%
\end{equation}
and \textit{Green's formula}%
\begin{equation}
\int_{a}^{b}\ell\lbrack f](x)\overline{g}(x)(1-x^{2})^{-1}dx=\left[
f(x)\overline{g}^{\prime}(x)-f^{\prime}(x)\overline{g}(x)\right]  \mid_{a}%
^{b}+\int_{a}^{b}f(x)\overline{\ell\lbrack g]}(x)(1-x^{2})^{-1}dx\text{.}
\label{Green's formula}%
\end{equation}

\begin{theorem}
\label{SLP and D}The Jacobi differential expression $($\ref{Jacobi DE}$)$\ is
strong limit-point $($SLP$)$ and Dirichlet at $x=\pm1$. That is to say, for
$f,g\in\Delta$
\end{theorem}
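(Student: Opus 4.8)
The theorem as displayed is cut off; the full statement of "strong limit-point and Dirichlet at $x=\pm1$" should read: for all $f,g\in\Delta$,
\[
\lim_{x\to\pm1}f'(x)\overline{g}(x)=0
\]
(the SLP condition) and
\[
f'\overline{g}',\ (1-x^2)^{-1}f\overline{g}\in L^1(-1,1)
\]
(the Dirichlet condition). So the plan below addresses proving these two conclusions.

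\begin{proof}[Proof proposal]
The plan is to exploit the structure of $\Delta$ recorded in \eqref{Maximal Domain 1} and \eqref{Maximal Domain 2}, namely that $f\in\Delta$ forces $(1-x^2)^{1/2}f''\in L^2(-1,1)$ and $(1-x^2)^{-1/2}f\in L^2(-1,1)$, together with the Hardy-type operator inequality of Theorem \ref{CHEL}. First I would establish the \emph{Dirichlet property}. The term $(1-x^2)^{-1/2}f\in L^2(-1,1)$ already gives, via Cauchy--Schwarz, that $(1-x^2)^{-1}f\overline{g}=\big((1-x^2)^{-1/2}f\big)\overline{\big((1-x^2)^{-1/2}g\big)}\in L^1(-1,1)$ for $f,g\in\Delta$. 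For $f'\overline{g}'\in L^1$ it suffices to show $f'\in L^2(-1,1)$ whenever $f\in\Delta$; then another Cauchy--Schwarz finishes it. To get $f'\in L^2$, I would write $f'$ as an integral of $f''$ from a fixed interior point and apply Theorem \ref{CHEL} on each half-interval: near $x=1$, with weight $w\equiv1$, take $\varphi=1$ and $\psi=(1-x^2)^{-1/2}$ (or the reverse on $(-1,c]$), so that the relevant $K(x)$ is controlled by $\big(\int^x (1-t^2)^{-1}dt\big)^{1/2}(\text{const})$... this does not converge, so instead I would use the refined bookkeeping: since $(1-x^2)^{1/2}f''\in L^2$, write $f'(x)-f'(c)=\int_c^x f''$ and estimate $|f'(x)|\le |f'(c)|+\big(\int_c^x (1-t^2)^{-1}dt\big)^{1/2}\|(1-t^2)^{1/2}f''\|_{2}$. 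Near $x=1$ this bound behaves like $O\big((-\log(1-x))^{1/2}\big)$, which is indeed in $L^2$ near $1$; hence $f'\in L^2(-1,1)$ after checking the (harmless) behavior at the other endpoint and on the interior. This yields the Dirichlet property.

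Next I would prove the \emph{strong limit-point property}, i.e. $\lim_{x\to\pm1}f'(x)\overline{g}(x)=0$ for all $f,g\in\Delta$. By symmetry treat $x\to 1^-$. The idea is: from $f\in\Delta$ we know $\ell[f]=-(1-x^2)f''+kf\in L_{-1,-1}^2(-1,1)$, equivalently $(1-x^2)^{1/2}f''\in L^2$ and $(1-x^2)^{-1/2}f\in L^2$; and from the Dirichlet identity \eqref{Dirichlet's formula}, the boundary term $f'(x)\overline{g}(x)\big|_a^b$ equals $-\int_a^b\ell[f]\overline{g}(1-x^2)^{-1}+\int_a^b\big(f'\overline{g}'+k(1-x^2)^{-1}f\overline{g}\big)$, and each integral on the right converges absolutely as $a\to -1,\ b\to 1$ (by the Dirichlet property just proved and Cauchy--Schwarz with the $L_{-1,-1}^2$ membership of $\ell[f]$). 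Therefore $\lim_{x\to 1^-}f'(x)\overline{g}(x)$ \emph{exists} (finitely); call it $L$. To see $L=0$, I would argue that if $L\neq0$ then $f'(x)\overline{g}(x)$ is bounded away from $0$ near $1$; but $g\in L_{-1,-1}^2$ forces $(1-x^2)^{-1/2}g\in L^2$ near $1$, so $g$ cannot be bounded below near $1$ unless... the cleaner route is: take $f=g$, so $\lim_{x\to1^-}f'(x)\overline{f}(x)=L$; if $L\neq 0$ then $|f(x)|\sim c/|f'(x)|$, and combined with $f'\in L^2$ near $1$ (from the Dirichlet step) and $(1-x^2)^{-1/2}f\in L^2$ near $1$, one derives a contradiction by an integration argument (essentially: $f'\in L^2$ near $1$ implies $f$ extends continuously to $1$ with a finite limit $f(1)$; if $f(1)\neq0$ then $(1-x^2)^{-1/2}f\notin L^2$ near $1$, contradiction; so $f(1)=0$, hence $f'(x)\overline{f}(x)\to 0$). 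The general (polarized) case $f\neq g$ then follows from the diagonal case by the standard polarization identity applied to the sesquilinear boundary form.

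The main obstacle I anticipate is the strong limit-point step --- specifically, pinning down that the (now known to exist) limit of $f'(x)\overline{g}(x)$ is actually $0$ rather than merely finite. The Dirichlet identity gives existence of the limit almost for free, but ruling out a nonzero value requires genuinely using the weight singularity: one must show that $f\in\Delta$ forces $f$ to extend continuously to $\pm1$ with $f(\pm1)=0$, and this is exactly where the interplay between $(1-x^2)^{-1/2}f\in L^2$ (from $f\in L_{-1,-1}^2$) and $f'\in L^2$ near the endpoints (from the Dirichlet computation, via Theorem \ref{CHEL}) must be combined carefully. A subsidiary technical point is verifying the hypotheses (i)--(iii) of Theorem \ref{CHEL} for the particular $\varphi,\psi,w$ chosen, and tracking the logarithmic blow-up of $\int^x(1-t^2)^{-1}dt$ to confirm it is still square-integrable near the endpoints; these are routine but must be done with care. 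Throughout, I would reduce work near $x=-1$ to the case near $x=1$ by the reflection $x\mapsto -x$, under which $\ell[\cdot]$ and $\Delta$ are invariant.
\end{proof}
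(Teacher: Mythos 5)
Your overall route is essentially the paper's: prove the Dirichlet property first ($f'\in L^{2}(-1,1)$ for $f\in\Delta$, plus Cauchy--Schwarz for the weighted term), then obtain existence of the boundary limit of $f'\overline{g}$ by integrating by parts (your use of Dirichlet's formula (\ref{Dirichlet's formula}) is the same computation the paper does with $\int f g''$), then show $f(\pm1)=0$ from the singularity of the weight (the paper's Lemma \ref{f(plusminus 1)=0}), and finally rule out a nonzero limit. Your Dirichlet step is correct: the pointwise bound $|f'(x)|\le|f'(c)|+\bigl(\int_{c}^{x}(1-t^{2})^{-1}dt\bigr)^{1/2}\|(1-t^{2})^{1/2}f''\|_{2}$ with the square-integrable logarithm is an acceptable elementary substitute for the paper's use of Theorem \ref{CHEL}; note, though, that your remark that the Theorem \ref{CHEL} route ``does not converge'' comes from pairing $\varphi$ and $\psi$ the wrong way -- with the reciprocal square root integrated against and the constant outside, $K(x)^{2}=\tfrac12(1-x)\ln\frac{1+x}{1-x}$ is bounded on $[0,1)$, which is exactly how the paper applies it. Your reduction of SLP to the diagonal case $f=g$ by polarization is legitimate (the paper instead runs the contradiction argument directly for a pair $f,g\in\Delta$), since $\Delta$ is a vector space and $x\mapsto f'(x)\overline{g}(x)$ is sesquilinear.

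The one genuine gap is the last step of your diagonal argument: ``so $f(1)=0$, hence $f'(x)\overline{f}(x)\to 0$.'' This is a non sequitur. Knowing that $f(1)=0$ and that $\lim_{x\to1^{-}}f'(x)\overline{f}(x)=L$ exists does not force $L=0$, because $f'$ may blow up at exactly the reciprocal rate at which $f$ vanishes (for instance $f(x)=\sqrt{1-x}$ has $f(1)=0$ and $f(x)f'(x)\equiv-\tfrac12$; such an $f$ is of course not in $\Delta$, but it shows the implication you invoke needs more input). Eliminating $L\neq0$ is precisely the content of the paper's Strong Limit-Point lemma and requires a quantitative argument exploiting $f'\in L^{2}$ near $1$: if $L\neq0$ then near $1$ one has $|f'(x)|\ge\frac{|L|}{2|f(x)|}$, hence $|f'(x)|^{2}\ge\frac{|L|}{2}\frac{|f'(x)|}{|f(x)|}$, and $\int_{x_{0}}^{x}\frac{|f'(t)|}{|f(t)|}dt\ge\bigl|\ln|f(x)|\bigr|-C\to\infty$ precisely because $f(1)=0$, contradicting $f'\in L^{2}$ (this is the paper's $\ln f$ blow-up computation; equivalently, $|f(x)|\le(1-x)^{1/2}\|f'\|_{L^{2}(x,1)}=o\bigl((1-x)^{1/2}\bigr)$ forces $|f'(x)|^{2}\ge c/(1-x)$ near $1$, which is not integrable). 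Your sketch names the right ingredients ($|f|\sim c/|f'|$, $f'\in L^{2}$, the weight singularity) but the parenthetical chain as written stops at $f(\pm1)=0$, which by itself does not close the argument; with the logarithmic-divergence step inserted, the proposal matches the paper's proof.
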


\begin{enumerate}
\item[(i)] (Dirichlet) $\int_{0}^{1}\left\vert f^{\prime}(t)\right\vert
^{2}dt<\infty$ and $\int_{-1}^{0}\left\vert f^{\prime}(t)\right\vert
^{2}dt<\infty$,

\item[(ii)] (SLP) $\lim\limits_{x\rightarrow\pm1}f^{\prime}(x)\overline
{g}(x)=0.$
\end{enumerate}

The proof of this theorem will follow immediately after the following three
lemmas are established.

\begin{lemma}
[Dirichlet Condtion]\label{Dirichlet -1,-1}For $f\in\Delta,f^{\prime}\in
L^{2}(-1,1).$ In particular, we may assume that $f\in AC[-1,1]$ for all
$f\in\Delta.$
\end{lemma}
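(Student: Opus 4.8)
The plan is to prove this lemma by exploiting the operator inequality of Theorem~\ref{CHEL} applied to the interval $(a,b)=(-1,1)$ with weight $w(x)\equiv 1$. The starting point is the structural information recorded in $(\ref{Maximal Domain 1})$ and $(\ref{Maximal Domain 2})$: for $f\in\Delta$ we have $(1-x^2)^{1/2}f''\in L^2(-1,1)$ and $(1-x^2)^{-1/2}f\in L^2(-1,1)$. What we want is the intermediate statement $f'\in L^2(-1,1)$. I would obtain this by representing $f'$ through two integral operators of the type appearing in Theorem~\ref{CHEL}, one handling the endpoint $x=1$ and one handling $x=-1$, and then invoking the boundedness conclusion of that theorem.

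Concretely, first I would choose a base point, say $x=0$, and write for $x\in(0,1)$
\[
f'(x)=f'(0)+\int_0^x f''(t)\,dt.
\]
The constant term $f'(0)$ is harmless on any compact piece, so the only issue is integrability of $f'$ near $x=1$. Writing $f''(t)=(1-t^2)^{-1/2}\cdot(1-t^2)^{1/2}f''(t)$ and setting $g(t):=(1-t^2)^{1/2}f''(t)\in L^2(-1,1)$, the remainder is $\int_0^x (1-t^2)^{-1/2} g(t)\,dt$. This is exactly an operator of the form $(Bg)(x)=\psi(x)\int_0^x \varphi(t)g(t)\,dt$ with $\psi\equiv 1$ and $\varphi(t)=(1-t^2)^{-1/2}$ (after splitting the $(-1,1)$-factor as $(1-t)^{-1/2}(1+t)^{-1/2}$, the singularity near $t=1$ is the only one that matters for this half, and near $t=-1$ the factor is bounded). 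One then checks the hypotheses (i)--(iii) of Theorem~\ref{CHEL}: local square-integrability of $\varphi$ and $\psi$ is clear, the endpoint condition (ii) holds because $\int_0^1 (1-t)^{-1}\,dt$ diverges but $\int_0^c(1-t)^{-1/2\cdot 2}\,dt$—wait, one must be slightly careful: with $\varphi(t)=(1-t^2)^{-1/2}$ we have $|\varphi|^2=(1-t^2)^{-1}$, which is \emph{not} integrable at $t=1$, so the relevant finiteness is of the Muckenhoupt constant $K$, not of $\int|\varphi|^2$. So I would instead set this up as the operator $A$ acting from the right endpoint, or more cleanly compute $K(x)$ directly: with $\varphi(t)=(1-t^2)^{-1/2}$ on $(0,1)$ and $\psi\equiv 1$, we get $\int_0^x|\varphi|^2=\int_0^x(1-t^2)^{-1}dt=\tfrac12\log\frac{1+x}{1-x}$ and $\int_x^1|\psi|^2=1-x$, whose product tends to $0$ as $x\to 1^-$ and is bounded on $[0,1)$, so $K<\infty$. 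By Theorem~\ref{CHEL}, the map $g\mapsto \varphi(x)\int_x^1 g(t)\,dt$ is bounded on $L^2(0,1)$; applying it to the $L^2$ function $(1-t^2)^{1/2}f''$ and rearranging gives $f'\in L^2(0,1)$. The endpoint $x=-1$ is handled symmetrically by reflection $x\mapsto -x$.

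Once $f'\in L^2(-1,1)$ is established, the second assertion—$f\in AC[-1,1]$—follows routinely: $f(x)=f(0)+\int_0^x f'(t)\,dt$, and since $f'\in L^1(-1,1)$ (as $L^2\subset L^1$ on a finite interval), this integral representation extends continuously to the closed interval $[-1,1]$, so $f$ agrees a.e.\ with an absolutely continuous function on $[-1,1]$, which we may take to be $f$ itself.

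The main obstacle, and the only delicate point, is verifying that the Muckenhoupt/Chisholm--Everitt constant $K=\sup_x K(x)$ is finite with the correct splitting of weights, i.e.\ choosing $\varphi$ and $\psi$ so that the logarithmic growth of $\int(1-t^2)^{-1}dt$ near the endpoint is exactly killed by the vanishing of the conjugate factor. This is why one sets $\varphi(t)=(1-t^2)^{-1/2}$ against $\psi\equiv 1$ rather than the reverse, and why one works on each half-interval $(0,1)$ and $(-1,0)$ separately rather than on all of $(-1,1)$ at once. Everything else is bookkeeping: applying Dirichlet's formula is not even needed here, only the two $L^2$-membership facts $(\ref{Maximal Domain 1})$ and $(\ref{Maximal Domain 2})$ together with the integral-operator bound.
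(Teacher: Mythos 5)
Your proposal is correct and follows essentially the same route as the paper: write $f'(x)=f'(0)+\int_0^x (1-t^2)^{-1/2}\bigl[(1-t^2)^{1/2}f''(t)\bigr]\,dt$ on each half-interval, verify that the Chisholm--Everitt constant is finite via the bounded function $\tfrac12(1-x)\ln\frac{1+x}{1-x}$, and conclude $f'\in L^2$, hence $f\in AC[-1,1]$. The only blemish is the final sentence's reference to the operator $g\mapsto\varphi(x)\int_x^1 g(t)\,dt$ (the $A$-form) when the remainder term is the $B$-form $\int_0^x\varphi(t)g(t)\,dt$ you had correctly identified earlier; since Theorem~\ref{CHEL} bounds both operators once $K<\infty$, this is harmless.
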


\begin{proof}
We prove that $f^{\prime}\in L^{2}[0,1);$ a similar proof establishes
$f^{\prime}\in L^{2}(-1,0].$ Since $f^{\prime}\in AC_{\mathrm{loc}}[0,1),$ we
see that%
\begin{equation}
f^{\prime}(x)=f(0)+\int_{0}^{x}\frac{f^{\prime\prime}(t)\sqrt{1-t^{2}}}%
{\sqrt{1-t^{2}}}dt\text{ \ \ \ \ \ \ }\left(  x\in\lbrack0,1)\right)  .
\label{Dirichlet -1 -1 1}%
\end{equation}
We now apply Theorem \ref{CHEL} with $\varphi(x)=1$ and $\psi(x)=1/\sqrt
{1-x^{2}}.$ Since%
\[
\int_{0}^{x}\psi^{2}(t)dt\cdot\int_{x}^{1}\varphi^{2}(t)dt=\frac{1}{2}%
(1-x)\ln(\frac{1+x}{1-x})
\]
is bounded on $[0,1)$, we see that $\int\limits_{0}^{x}\frac{f^{\prime\prime
}(t)\sqrt{1-t^{2}}}{\sqrt{1-t^{2}}}dt\in L^{2}[0,1).$ Hence, from
(\ref{Dirichlet -1 -1 1}), $f^{\prime}\in L^{2}[0,1).$
\end{proof}

\begin{lemma}
\label{f(plusminus 1)=0}For all $f\in\Delta,$ $f(\pm1)=0.$
\end{lemma}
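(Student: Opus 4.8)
The plan is to exploit the previous lemma, which already gives us $f \in AC[-1,1]$ for every $f \in \Delta$, so that the boundary values $f(1)$ and $f(-1)$ at least exist as finite numbers; the task is to show they vanish. I will argue at $x=1$ and note that $x=-1$ is symmetric. Suppose, for contradiction, that $f(1) = c \neq 0$. Since $f$ is continuous on $[-1,1]$, there is a subinterval $[\delta,1)$ on which $|f(x)| \geq |c|/2 > 0$. But membership of $f$ in $L^2_{-1,-1}(-1,1)$ is, by \eqref{Maximal Domain 2}, equivalent to $(1-x^2)^{-1/2} f \in L^2(-1,1)$, and on $[\delta,1)$ we would have
\[
\int_\delta^1 \frac{|f(x)|^2}{1-x^2}\,dx \geq \frac{|c|^2}{4}\int_\delta^1 \frac{dx}{1-x^2} = +\infty,
\]
since $\int_\delta^1 (1-x^2)^{-1}\,dx$ diverges. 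This contradicts $f \in L^2_{-1,-1}(-1,1)$, forcing $c = 0$. The identical estimate near $x=-1$ gives $f(-1)=0$, completing the argument.

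The one point that must be handled with care is the justification that the boundary limit $f(1)$ genuinely exists as a finite value, rather than the integral being finite merely because $f$ oscillates or decays; this is exactly what \lemref{Dirichlet -1,-1} supplies, since $f' \in L^2(-1,1) \subset L^1(-1,1)$ (on the finite interval) implies $f$ extends to an absolutely continuous function on the closed interval $[-1,1]$, so $f(\pm1) = \lim_{x\to\pm1} f(x)$ exists finitely. Thus the contradiction argument is legitimate: if that finite limit were nonzero, continuity would give the uniform lower bound $|f(x)| \geq |c|/2$ on a one-sided neighborhood of the endpoint, and the weight $(1-x^2)^{-1}$ is non-integrable there.

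I do not expect any serious obstacle here; the only mild subtlety is organizational — making sure to invoke \lemref{Dirichlet -1,-1} first so that pointwise boundary values are meaningful, and then using the non-integrability of $(1-x^2)^{-1}$ at the endpoints together with characterization \eqref{Maximal Domain 2} of membership in $L^2_{-1,-1}(-1,1)$. One could alternatively phrase the whole thing directly in terms of $(1-x^2)^{-1/2}f \in L^2(-1,1)$ and the continuity of $f$, which is perhaps the cleanest presentation.
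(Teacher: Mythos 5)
Your proposal is correct and follows essentially the same route as the paper: invoke Lemma \ref{Dirichlet -1,-1} to get $f\in AC[-1,1]$ so that $f(\pm1)$ exist, then assume $f(1)\neq0$, use continuity to bound $|f|$ below near the endpoint, and contradict $f\in L^{2}_{-1,-1}(-1,1)$ via the non-integrability of $(1-x^{2})^{-1}$ there. No gaps; the argument at $x=-1$ is symmetric, exactly as in the paper.
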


\begin{proof}
Note that from the previous lemma, $f\in AC[-1,1]$ and thus the limits
\[
f(\pm1):=\lim_{x\rightarrow\pm1}f(x)
\]
exist and are finite. Suppose that $f(1)\neq0;$ we can, without loss of
generality, assume that $f(1)>0.$ By continuity, there exists $x^{\ast}%
\in(0,1)$ such that%
\[
f(x)>\frac{f(1)}{2}\text{ \ \ \ \ \ \ for }x\in\lbrack x^{\ast},1).
\]
Then
\[
\infty>\int_{0}^{1}\left\vert f(t)\right\vert ^{2}(1-t^{2})^{-1}dt\geq
\int_{x^{\ast}}^{1}\left\vert f(t)\right\vert ^{2}(1-t^{2})^{-1}dt\geq
\frac{\left(  f(1)\right)  ^{2}}{4}\int_{x^{\ast}}^{1}(1-t^{2})^{-1}%
dt=\infty,
\]
a contradiction. A similar argument shows that $f(-1)=0.$
\end{proof}

\begin{lemma}
[Strong Limit-Point Condition]For all $f,g\in\Delta,\lim\limits_{x\rightarrow
\pm1^{\mp}}f(x)g^{\prime}(x)=0.$
\end{lemma}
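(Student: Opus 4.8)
The plan is to use the Dirichlet formula (\ref{Dirichlet's formula}) together with the preceding two lemmas to force the boundary term $f'(x)\overline{g}(x)$ to have a limit at $x=\pm1$, and then use Lemma \ref{f(plusminus 1)=0} to identify that limit as $0$. First I would work at the endpoint $x=1$; the endpoint $x=-1$ is entirely symmetric. Fix $f,g\in\Delta$ and fix $a\in(-1,1)$. Applying (\ref{Dirichlet's formula}) on $[a,b]$ with $b<1$, we have
\begin{equation}
f'(b)\overline{g}(b)=f'(a)\overline{g}(a)+\int_{a}^{b}\left[f'(t)\overline{g}'(t)+k(1-t^{2})^{-1}f(t)\overline{g}(t)\right]dt-\int_{a}^{b}\ell[f](t)\overline{g}(t)(1-t^{2})^{-1}dt.\nonumber
\end{equation}
I claim each term on the right has a finite limit as $b\to1^{-}$. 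By Lemma \ref{Dirichlet -1,-1}, $f',g'\in L^{2}(-1,1)$, so $f'\overline{g}'\in L^{1}(-1,1)$ by Cauchy--Schwarz and the first integral converges. For the second integral, note $k(1-t^{2})^{-1}f\overline{g}=k\bigl((1-t^{2})^{-1/2}f\bigr)\bigl(\overline{(1-t^{2})^{-1/2}g}\bigr)$, and by (\ref{Maximal Domain 2}) both factors lie in $L^{2}(-1,1)$, so this is in $L^{1}(-1,1)$ as well. For the third integral, $\ell[f]\in L_{-1,-1}^{2}(-1,1)$ and $g\in L_{-1,-1}^{2}(-1,1)$, so $\ell[f]\overline{g}(1-t^{2})^{-1}\in L^{1}(-1,1)$. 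Hence $\lim_{b\to1^{-}}f'(b)\overline{g}(b)$ exists and is finite; call it $L_{1}$.

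Next I would argue $L_{1}=0$. Since $f'\in L^{2}[0,1)$ and $g\in AC[-1,1]$ with $g(1)=0$ by Lemmas \ref{Dirichlet -1,-1} and \ref{f(plusminus 1)=0}, one expects $f'\overline{g}$ to be small near $1$ on average. Concretely, the product $f'(x)\overline{g}(x)$ having a nonzero limit $L_{1}$ would force $|f'(x)\overline{g}(x)|\ge |L_{1}|/2$ near $1$, hence $|f'(x)|\ge |L_{1}|/(2|g(x)|)$; but $|g(x)|\to 0$, so $|f'(x)|$ would blow up at least like $1/|g(x)|$ near $x=1$. To make this quantitative, since $g\in AC[-1,1]$ and $g(1)=0$, write $g(x)=-\int_{x}^{1}g'(t)\,dt$, so by Cauchy--Schwarz $|g(x)|\le (1-x)^{1/2}\|g'\|_{L^{2}[x,1]}=o\bigl((1-x)^{1/2}\bigr)$ as $x\to1$ (the $L^2$ tail vanishes). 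If $L_{1}\ne0$ then $|f'(x)|\gg (1-x)^{-1/2}\bigl(o((1-x)^{1/2})\bigr)^{-1}$, which is not merely non-$L^2$ but forces $\int_{x}^{1}|f'|^{2}$ to diverge, contradicting $f'\in L^{2}[0,1)$. I would write this out carefully: assuming $L_1\neq0$, pick $x^{*}$ with $|f'(x)g(x)|\geq|L_1|/2$ on $[x^{*},1)$, so $|f'(x)|^2\geq |L_1|^2/(4|g(x)|^2)$, and then $\int_{x^{*}}^{1}|f'(t)|^2\,dt\geq \frac{|L_1|^2}{4}\int_{x^{*}}^{1}|g(t)|^{-2}\,dt$; combined with $|g(t)|^{2}\leq (1-t)\int_{t}^{1}|g'|^2$, the lower integral diverges, giving the contradiction. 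Therefore $L_{1}=0$, i.e. $\lim_{x\to1^{-}}f'(x)\overline{g}(x)=0$; the same argument at $-1$ gives $\lim_{x\to-1^{+}}f'(x)\overline{g}(x)=0$. Replacing the roles of $f$ and $g$ (both are in $\Delta$) and taking complex conjugates yields $\lim_{x\to\pm1}f(x)g'(x)=0$, which is the stated strong limit-point condition.

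The main obstacle is the passage from ``the boundary term has a limit'' to ``that limit is zero.'' Establishing existence of the limit is essentially bookkeeping with the Dirichlet formula and the integrability facts (\ref{Maximal Domain 1})--(\ref{Maximal Domain 2}); the delicate point is the quantitative decay estimate $|g(x)|=o((1-x)^{1/2})$ coming from $g(1)=0$ and $g'\in L^{2}$, and using it to contradict $f'\in L^{2}$ near the endpoint. An alternative, perhaps cleaner route to kill the limit: if $\lim_{x\to1^-}f'(x)\overline g(x)=L_1\neq0$, then since $g(x)\to0$ we would need $f'(x)\to\infty$, so $f'\notin L^{2}$ near $1$ unless $L_1=0$ — but making ``$f'\to\infty$'' rigorous still requires the rate at which $g\to0$, so the $L^2$-tail estimate seems unavoidable. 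Either way, the SLP condition follows once this one limit is pinned down, and Theorem \ref{SLP and D} is then immediate: part (i) is Lemma \ref{Dirichlet -1,-1}, and part (ii) is exactly the content just proved.
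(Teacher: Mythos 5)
Your proposal is correct, and its overall skeleton matches the paper's: first show the boundary term has a finite limit at the endpoint, then rule out a nonzero limit by contradiction using the vanishing of a $\Delta$-function at $\pm1$ (Lemma \ref{f(plusminus 1)=0}). The differences are in the details of both steps, and your version is arguably cleaner. For existence of the limit, the paper integrates $\int_0^x f g''$ by parts (using $fg''\in L^1$ from (\ref{Maximal Domain 1})--(\ref{Maximal Domain 2}) and $f'g'\in L^1$ from Lemma \ref{Dirichlet -1,-1}) and works directly with $f g'$; you instead read the limit of $f'\overline g$ off Dirichlet's formula (\ref{Dirichlet's formula}), which is the same integration by parts packaged differently, and then recover $fg'$ by swapping $f$ and $g$ and conjugating --- for that last step you should note (or assume, as the paper does, that $f,g$ are WLOG real-valued) that $\Delta$ is closed under conjugation since $\ell[\cdot]$ has real coefficients. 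For the contradiction step the mechanisms genuinely differ: the paper bounds $|f'g'|\ge c|f'|/(2f)$ and derives a logarithmic divergence $\tfrac{c}{2}|\ln f(x)|\to\infty$ contradicting $f'g'\in L^1$, whereas you use the quantitative Cauchy--Schwarz decay $|g(x)|^2\le(1-x)\int_x^1|g'|^2$ (from $g(1)=0$, $g'\in L^2$) to force $\int_{x^*}^1|f'|^2\gtrsim\int_{x^*}^1\frac{dx}{1-x}=\infty$, contradicting $f'\in L^2$. Your route avoids the paper's implicit positivity/monotonicity bookkeeping around $\ln f$ and contradicts a cleaner hypothesis; the only small point to add is that $\int_{x^*}^1|g'|^2>0$ (automatic, since under the contradiction hypothesis $g$ is nonvanishing on $[x^*,1)$), so the lower bound on $1/|g(t)|^2$ is legitimate. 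With those two small remarks supplied, your argument is complete.
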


\begin{proof}
Let $f,g\in\Delta.$ It suffices to prove that
\[
\lim_{x\rightarrow1^{-}}f(x)g^{\prime}(x)=0;
\]
a similar argument establishes the other limit. We assume that $f$ and $g$ are
both real-valued. Note, by H\"{o}lder's inequality, (\ref{Maximal Domain 1}),
and (\ref{Maximal Domain 2}) that $fg^{\prime\prime}\in L^{1}(-1,1)$ so that
$\lim\limits_{x\rightarrow1^{-}}\int_{0}^{x}f(t)g^{\prime\prime}(t)dt$ exists
and is finite. Now, by integration by parts,
\[
\int_{0}^{x}f(t)g^{\prime\prime}(t)dt=f(t)g^{\prime}(t)\mid_{0}^{x}-\int%
_{0}^{x}f^{\prime}(t)g^{\prime}(t)dt.
\]
By Lemma \ref{Dirichlet -1,-1}, $\lim\limits_{x\rightarrow1^{-}}\int_{0}%
^{x}f^{\prime}(t)g^{\prime}(t)dt$ exists and is finite. Hence, we see that
$\lim\limits_{x\rightarrow1^{-}}f(x)g^{\prime}(x)$ exists and is finite.
Suppose that $\lim\limits_{x\rightarrow1^{-}}f(x)g^{\prime}(x)=c\neq0;$ we may
assume that $c>0.$ For $x$ close to $1,$ we may also assume that
\[
f(x)>0\text{ and }g^{\prime}(x)>0.
\]
Hence, there exists $x^{\ast}\in\lbrack0,1)$ such that $g^{\prime}%
(x)\geq\dfrac{c}{2f(x)}$ for $x\in\lbrack x^{\ast},1).$ Therefore,%
\[
\left\vert f^{\prime}(x)g^{\prime}(x)\right\vert \geq\frac{c\left\vert
f^{\prime}(x)\right\vert }{2f(x)}\text{ \ \ \ \ \ \ }\left(  x\in\lbrack
x^{\ast},1)\right)  .
\]
Integrate to obtain%
\[
\int_{x^{\ast}}^{x}\left\vert f^{\prime}(t)g^{\prime}(t)\right\vert
dt\geq\dfrac{c}{2}\int_{x^{\ast}}^{x}\frac{\left\vert f^{\prime}(t)\right\vert
}{f(t)}dt\geq\dfrac{c}{2}\left\vert \int_{x^{\ast}}^{x}\frac{f^{\prime}%
(t)}{f(t)}dt\right\vert =\dfrac{c}{2}\left\vert \ln f(x)\right\vert +\gamma,
\]
where $\gamma$ is some constant of integration. Now let $x\rightarrow1^{-};$
we see from Lemma \ref{f(plusminus 1)=0} that
\[
\infty>\int_{x^{\ast}}^{1}\left\vert f^{\prime}(t)g^{\prime}(t)\right\vert
dt\geq\dfrac{c}{2}\lim\limits_{x\rightarrow1^{-}}\left\vert \ln
f(x)\right\vert +k=\infty.
\]
This contradiction shows that $c=0$ and this establishes the lemma.
\end{proof}

We now define the operator%
\[
A:\mathcal{D}(A)\subset L_{-1,-1}^{2}(-1,1)\rightarrow L_{-1,-1}^{2}(-1,1)
\]
by%
\begin{equation}%
\begin{array}
[c]{c}%
Af=\ell\lbrack f]\\
f\in\mathcal{D}(A):=\Delta.
\end{array}
\label{Jacobi operator}%
\end{equation}
Since $x=\pm1$ are SLP, we see from the Glazman-Krein-Naimark theory
\cite[Chapter V]{Naimark} that $A$ is an unbounded, self-adjoint operator in
$L_{-1,-1}^{2}(-1,1)$ with spectrum%
\[
\sigma(A)=\{n(n-1)+k\mid n=2,3,\ldots\}
\]
Moreover, from (\ref{Dirichlet's formula}), (\ref{Green's formula}), and
Theorem \ref{SLP and D}, we have the classic Green's formula,%
\[
\left(  Af,g\right)  _{-1,-1}=\int_{-1}^{1}\ell\lbrack f](x)\overline
{g}(x)(1-x^{2})^{-1}dx=\left(  f,Ag\right)  _{L_{-1,-1}^{2}(-1,1)},
\]
and Dirichlet's formula,%
\[
\left(  Af,g\right)  _{-1,-1}=\int_{-1}^{1}\left[  f^{\prime}(x)\overline
{g}^{\prime}(x)+k(1-x^{2})^{-1}f(x)\overline{g}(x)\right]  dx.
\]
In particular, observe that%
\begin{align}
\left(  Af,f\right)  _{-1,-1}  &  =\int_{-1}^{1}\left[  \left\vert f^{\prime
}(x)\right\vert ^{2}+k(1-x^{2})^{-1}\left\vert f(x)\right\vert ^{2}\right]
dx\label{Jacobi operator bounded below}\\
&  \geq k\left(  f,f\right)  _{-1,-1};\nonumber
\end{align}
in other words, the self-adjoint operator $A$ is bounded below in
$L_{-1,-1}^{2}(-1,1)$ by $kI$ where $I$ is the identity operator. This
observation will be important from the viewpoint of left-definite theory,
which we now briefly discuss.

\section{General Left-Definite Theory\label{Left-definite theory}}

In \cite{Littlejohn-Wellman}, Littlejohn and Wellman developed a general
abstract left-definite theory for a self-adjoint operator $T$ that is bounded
below in a Hilbert space $(H,(\cdot,\cdot))$. The results contained here are
important for our construction of the Jacobi self-adjoint operator $T$ in the
Sobolev space $W,$ see Section \ref{Sobolev Spectral Theory}, having the
Jacobi polynomials $\{P_{m}^{(-1,-1)}\}_{m=0}^{\infty}$ as a complete set of eigenfunctions.

There is a strong connection between left-definite theory and the theory of
Hilbert scales; indeed, our left-definite spaces are Hilbert scales. We refer
the reader to the recent texts of Albeverio and Kurasov \cite[Chapter
1.2.2]{Albeverio and Kurasov} and Simon \cite[Chapter 12]{Simon} as well as
the classic texts of Berezanski\u{\i} \cite{Berezanski} and Maz'ya
\cite{Maz'ya}.

Let $V$ be a vector space over $%
\mathbb{C}
$ with inner product $(\cdot,\cdot)$ such that $H=(V,(\cdot,\cdot))$ is a
Hilbert space. Let $s>0$ and suppose that $V_{s}$ is a vector subspace of $V$
with inner product $(\cdot,\cdot)_{s};$ let $H_{s}=(V_{s},(\cdot,\cdot)_{s})$
denote this inner product space.

Throughout this section, we assume that $T:\mathcal{D}(T)\subset H\rightarrow
H$ is a self-adjoint operator that is bounded below by $rI$ for some $r>0$,
where $I$ is the identity operator on $H;$ that is to say%
\[
(Tx,x)\geq r(x,x)\text{\quad}(x\in\mathcal{D}(T)).
\]
\noindent It is well known that, for $s>0$, the operator $T^{s}$ is
self-adjoint and bounded below in $H$ by $r^{s}I$.

\begin{definition}
\label{left-definite space definition}We say that $H_{s}=(V_{s},(\cdot
,\cdot)_{s})$ is an $s^{th}$ left-definite space associated with the pair
$(H,A)$ if
\end{definition}

\begin{enumerate}
\item[(i)] $H_{s}$ is a Hilbert space;

\item[(ii)] $\mathcal{D}(T^{s}),$ the domain of $T^{s},$ is a vector subspace
of $V_{s};$

\item[(iii)] $\mathcal{D}(T^{s})$ is dense in $H_{s};$

\item[(iv)] $(x,x)_{s}\geq r^{s}(x,x)$ for all $x\in V_{s};$

\item[(v)] $(x,y)_{s}=(T^{s}x,y)$ for all $x\in\mathcal{D}(T^{s}),y\in
V_{s}.\medskip$
\end{enumerate}

\noindent Littlejohn and Wellman in \cite[Theorem 3.1]{Littlejohn-Wellman}
prove the following existence/uniqueness result.

\begin{theorem}
Let $T:\mathcal{D}(T)\subset H\rightarrow H$ be a self-adjoint operator that
is bounded below by $rI$ for some $r>0$. Let $s>0$ and define $H_{s}%
=(V_{s},(\cdot,\cdot)_{s})$ by
\[
V_{s}=\mathcal{D}(T^{s/2}),
\]
and%
\[
(x,y)_{s}=(T^{s/2}x,T^{s/2}y)\text{ \ \ \ \ \ \ \ \ \ \ }(x,y\in V_{s}).
\]
Then $H_{s}$ is the unique left-definite space associated with the pair
$(H,T)$.
\end{theorem}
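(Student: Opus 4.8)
The plan is to verify, one by one, the five defining conditions of Definition~\ref{left-definite space definition} for the candidate space $H_s = (V_s,(\cdot,\cdot)_s)$ with $V_s = \mathcal D(T^{s/2})$ and $(x,y)_s = (T^{s/2}x,T^{s/2}y)$, and then to argue uniqueness separately. The essential input is the functional calculus for the self-adjoint operator $T$: since $T$ is self-adjoint and bounded below by $rI$ with $r>0$, its spectrum lies in $[r,\infty)$, and for each $t>0$ the operator $T^t$ is defined via the spectral measure, is self-adjoint, positive, injective, has dense range, and is bounded below by $r^t I$. I would state this at the outset as the standard fact cited just before the theorem, and also record the composition law $T^{s/2}\circ T^{s/2} = T^s$ (with the correct domain $\mathcal D(T^s) \subseteq \mathcal D(T^{s/2})$), which follows from the functional calculus.

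First I would check (i): $H_s$ is a Hilbert space. The form $(x,y)_s = (T^{s/2}x, T^{s/2}y)$ is clearly a positive semidefinite inner product, and it is positive definite because $T^{s/2}$ is injective. Completeness follows because $T^{s/2}$ is a closed operator: if $(x_n)$ is Cauchy in $\|\cdot\|_s$, then $(T^{s/2}x_n)$ is Cauchy in $H$, hence converges to some $y\in H$; also $(x_n)$ is Cauchy in $\|\cdot\|$ (by condition (iv), proved next), hence $x_n \to x$ in $H$; closedness of $T^{s/2}$ then gives $x\in\mathcal D(T^{s/2})$ and $T^{s/2}x = y$, so $x_n \to x$ in $\|\cdot\|_s$. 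Next, condition (iv): for $x\in V_s = \mathcal D(T^{s/2})$, the spectral theorem gives $(x,x)_s = \|T^{s/2}x\|^2 = \int_{[r,\infty)} \mu^s\, d\langle E_\mu x, x\rangle \ge r^s \int d\langle E_\mu x,x\rangle = r^s(x,x)$. For (ii), $\mathcal D(T^s)\subseteq \mathcal D(T^{s/2}) = V_s$ is immediate from the functional calculus. Condition (v): for $x\in\mathcal D(T^s)$ and $y\in V_s$, using $T^s = (T^{s/2})^2$ and self-adjointness of $T^{s/2}$, $(T^sx,y) = (T^{s/2}(T^{s/2}x),y) = (T^{s/2}x, T^{s/2}y) = (x,y)_s$.

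The step I expect to require the most care is (iii): $\mathcal D(T^s)$ is dense in $H_s$, i.e., dense in $\mathcal D(T^{s/2})$ in the $\|\cdot\|_s$-norm. The natural argument is spectral truncation: given $x\in\mathcal D(T^{s/2})$, set $x_n = E([r,n])x = \chi_{[r,n]}(T)x$, which lies in $\mathcal D(T^m)$ for every $m$, in particular in $\mathcal D(T^s)$; then $\|x - x_n\|_s^2 = \|T^{s/2}(x-x_n)\|^2 = \int_{(n,\infty)} \mu^s\, d\langle E_\mu x,x\rangle \to 0$ by dominated convergence, since $x\in\mathcal D(T^{s/2})$ guarantees $\int_{[r,\infty)}\mu^s\,d\langle E_\mu x,x\rangle < \infty$. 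This shows $\mathcal D(T^s)$ — indeed $\bigcap_m \mathcal D(T^m)$ — is $\|\cdot\|_s$-dense in $V_s$.

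Finally, for uniqueness, suppose $\widetilde H_s = (\widetilde V_s, (\cdot,\cdot)^\sim_s)$ is another $s$th left-definite space for $(H,T)$. One shows first that $(\cdot,\cdot)^\sim_s$ agrees with $(\cdot,\cdot)_s$ on $\mathcal D(T^s)$: for $x,y\in\mathcal D(T^s)$, property (v) of $\widetilde H_s$ gives $(x,y)^\sim_s = (T^sx,y) = (x,y)_s$ (the last equality being property (v) of $H_s$, already verified). Since $\mathcal D(T^s)$ is dense in both spaces (property (iii)) and the norms coincide there, the two completions are isometrically identified; it remains to check that the underlying sets $\widetilde V_s$ and $V_s$ both coincide with this common completion realized inside... this is the one genuinely delicate point, and here I would invoke property (iv) for each space, which forces the inclusion maps $\widetilde V_s \hookrightarrow H$ and $V_s \hookrightarrow H$ to be continuous, so that both $\widetilde V_s$ and $V_s$ embed as the same subspace of $H$, namely the $\|\cdot\|_s$-completion of $\mathcal D(T^s)$ sitting inside $H$ via the continuous embedding; this pins down $\widetilde V_s = V_s$ with identical inner products. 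Since this is precisely \cite[Theorem 3.1]{Littlejohn-Wellman}, I would either reproduce the short spectral argument above or simply cite it, as the paper has already done.
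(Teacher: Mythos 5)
Your proposal is correct, but note that the paper itself offers no proof of this statement: it is quoted verbatim as an existence/uniqueness result from \cite[Theorem 3.1]{Littlejohn-Wellman}, so there is no internal argument to compare against. Your verification of conditions (i)--(v) of Definition \ref{left-definite space definition} via the spectral theorem — positive definiteness and completeness of $(\cdot,\cdot)_s$ from injectivity and closedness of $T^{s/2}$, the lower bound $(x,x)_s\geq r^{s}(x,x)$ from the spectral integral over $[r,\infty)$, the inclusion $\mathcal{D}(T^{s})\subseteq\mathcal{D}(T^{s/2})$, the identity $(T^{s}x,y)=(T^{s/2}x,T^{s/2}y)$, and density of $\mathcal{D}(T^{s})$ by spectral truncation $x_n=E([r,n])x$ — is sound and is essentially the argument given in the cited reference. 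The only step you flag as delicate, uniqueness, is also correct in outline; to make it fully explicit one takes $x\in\widetilde{V}_s$, chooses $p_n\in\mathcal{D}(T^{s})$ with $p_n\rightarrow x$ in $\widetilde{H}_s$ (possible by (iii)), observes that $(p_n)$ is Cauchy in the common norm (the inner products agree on $\mathcal{D}(T^{s})$ by (v)), hence converges in $H_s$ to some $x'\in V_s$, and then uses (iv) for both spaces to conclude $p_n\rightarrow x$ and $p_n\rightarrow x'$ in $H$, so $x=x'\in V_s$; symmetry and continuity of the inner products finish the identification. With that sentence added, your proof is complete.
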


\begin{definition}
For $s>0,$ let $H_{s}=(V_{s},(\cdot,\cdot)_{s})$ be the $s^{th}$ left-definite
space associated with $(H,T)$. If there exists a self-adjoint operator
$T_{s}:\mathcal{D}(T_{s})\subset H_{s}\rightarrow H_{s}$ satisfying%
\[
T_{s}x=Tx\text{ \ \ \ \ \ \ \ \ \ \ \ }(x\in\mathcal{D}(T_{s})\subset
\mathcal{D}(T)),
\]
then we call such an operator an $s^{th}$ left-definite operator associated
with the pair $(H,T)$.
\end{definition}

In \cite[Theorem 3.2]{Littlejohn-Wellman}, the authors establish the following
existence/uniqueness result.

\begin{theorem}
For any $s>0$, let $H_{s}=(V_{s},(\cdot,\cdot)_{s})$ denote the $s^{th}$
left-definite space associated with $(H,T)$. Then there exists a unique
left-definite operator $T_{s}$ in $H_{s}$ associated with $(H,T)$.
Furthermore, $T_{s}x=Tx$ for all $s>0$ and $x\in\mathcal{D}(T_{s}),$ where
\[
\mathcal{D}(T_{s})=V_{s+2}\subset\mathcal{D}(T).
\]

\end{theorem}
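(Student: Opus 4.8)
The plan is to realize the desired operator as a unitary conjugate of $T$, with the spectral theorem doing the essential work. Since $T$ is self-adjoint and bounded below by $rI$ with $r>0$, the spectral theorem gives a resolution of the identity $\{E(\lambda)\}_{\lambda\ge r}$ with $T=\int_{r}^{\infty}\lambda\,dE(\lambda)$; for each $\gamma\ge0$ the operator $T^{\gamma}=\int_{r}^{\infty}\lambda^{\gamma}\,dE(\lambda)$ is self-adjoint, bounded below by $r^{\gamma}I$, boundedly invertible with $T^{-\gamma}=\int_{r}^{\infty}\lambda^{-\gamma}\,dE(\lambda)$, and has domain $\mathcal{D}(T^{\gamma})=\{x\in H\mid\int_{r}^{\infty}\lambda^{2\gamma}\,d\Vert E(\lambda)x\Vert^{2}<\infty\}$. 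Two elementary consequences of this calculus will be used throughout: (a) $\mathcal{D}(T^{a})\subseteq\mathcal{D}(T^{b})$ whenever $a\ge b\ge0$, since $\lambda^{2b}\le r^{-2(a-b)}\lambda^{2a}$ on $[r,\infty)$; and (b) for $a,b\ge0$, $\mathcal{D}(T^{a+b})=\{x\in\mathcal{D}(T^{b})\mid T^{b}x\in\mathcal{D}(T^{a})\}$ with $T^{a+b}x=T^{a}T^{b}x$ there, which follows from $\Vert E(\lambda)T^{b}x\Vert^{2}=\int_{[r,\lambda]}\mu^{2b}\,d\Vert E(\mu)x\Vert^{2}$ and a change of variables in the spectral integral.

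By the description of $H_{s}=(V_{s},(\cdot,\cdot)_{s})$ recalled above, $(x,y)_{s}=(T^{s/2}x,T^{s/2}y)$ for $x,y\in V_{s}=\mathcal{D}(T^{s/2})$, so $U:=T^{s/2}$ is an isometry of $H_{s}$ into $H$; it is surjective because $T^{-s/2}z\in\mathcal{D}(T^{s/2})$ and $U(T^{-s/2}z)=z$ for every $z\in H$, hence $U\colon H_{s}\to H$ is unitary. I then set
\[
T_{s}:=U^{-1}TU,\qquad\mathcal{D}(T_{s}):=\{x\in V_{s}\mid Ux\in\mathcal{D}(T)\}.
\]
As the conjugate of the self-adjoint operator $T$ by a unitary map, $T_{s}$ is self-adjoint (in particular densely defined) in $H_{s}$. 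Applying fact (b) with $a=1$, $b=s/2$ shows that the defining condition for $\mathcal{D}(T_{s})$ is precisely $x\in\mathcal{D}(T^{1+s/2})=\mathcal{D}(T^{(s+2)/2})=V_{s+2}$, so $\mathcal{D}(T_{s})=V_{s+2}$; and fact (a) with $a=1+s/2\ge1=b$ gives $V_{s+2}\subseteq\mathcal{D}(T)$. Finally, for $x\in\mathcal{D}(T_{s})$ the operators $T$ and $T^{\pm s/2}$ commute (all being functions of $T$) and $Tx\in\mathcal{D}(T^{s/2})$ by (b), so $T_{s}x=T^{-s/2}TT^{s/2}x=T^{-s/2}T^{s/2}Tx=Tx$. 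Thus $T_{s}x=Tx$ on $\mathcal{D}(T_{s})\subseteq\mathcal{D}(T)$, so $T_{s}$ is an $s^{\mathrm{th}}$ left-definite operator with the stated domain; in particular such an operator exists.

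For uniqueness, suppose $S$ is any self-adjoint operator in $H_{s}$ with $\mathcal{D}(S)\subseteq\mathcal{D}(T)$ and $Sx=Tx$ for all $x\in\mathcal{D}(S)$. If $x\in\mathcal{D}(S)$ then $x\in V_{s}=\mathcal{D}(T^{s/2})$ and $x\in\mathcal{D}(T)$, while $Tx=Sx\in H_{s}=\mathcal{D}(T^{s/2})$; by (b) this forces $x\in\mathcal{D}(T^{1+s/2})=\mathcal{D}(T_{s})$ with $Sx=Tx=T_{s}x$. Hence $S\subseteq T_{s}$, and since both are self-adjoint, $T_{s}=T_{s}^{*}\subseteq S^{*}=S$, so $S=T_{s}$.

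I expect the only genuinely technical point to be fact (b), the identity $\mathcal{D}(T^{a+b})=\{x\in\mathcal{D}(T^{b})\mid T^{b}x\in\mathcal{D}(T^{a})\}$, which is exactly where the positivity $r>0$ and the spectral theorem are needed; the remainder is bookkeeping with the functional calculus together with the standard fact that a self-adjoint operator has no proper symmetric extension. One could instead define $T_{s}$ directly as the operator $\int_{r}^{\infty}\lambda\,dE(\lambda)$ read in $H_{s}$ and verify self-adjointness and the domain identity by hand, but the unitary-conjugation route makes self-adjointness automatic and is cleanest.
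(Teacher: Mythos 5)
Your proposal is correct, but note that the paper itself offers no proof of this statement: it is quoted verbatim from Littlejohn--Wellman and justified only by the citation to \cite[Theorem 3.2]{Littlejohn-Wellman}, so there is no in-paper argument to compare against. Your reconstruction is a sound, self-contained proof and is close in spirit to the original: you use the preceding characterization $V_{s}=\mathcal{D}(T^{s/2})$, $(x,y)_{s}=(T^{s/2}x,T^{s/2}y)$ to see that $U=T^{s/2}$ is a unitary map of $H_{s}$ onto $H$ (surjectivity via the bounded inverse $T^{-s/2}$, which exists precisely because $T\geq rI$ with $r>0$), define $T_{s}=U^{-1}TU$ so that self-adjointness in $H_{s}$ is automatic, and then identify $\mathcal{D}(T_{s})=\{x\in\mathcal{D}(T^{s/2})\mid T^{s/2}x\in\mathcal{D}(T)\}$ with $\mathcal{D}(T^{1+s/2})=V_{s+2}\subset\mathcal{D}(T)$ by the standard composition rule for the functional calculus, $\mathcal{D}(T^{a+b})=\{x\in\mathcal{D}(T^{b})\mid T^{b}x\in\mathcal{D}(T^{a})\}$ with $T^{a+b}=T^{a}T^{b}$ there (valid here since $\mathcal{D}(T^{a+b})\subseteq\mathcal{D}(T^{b})$, again using $r>0$); the commutation $T^{-s/2}TT^{s/2}x=Tx$ on $V_{s+2}$ then gives the stated action. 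The uniqueness step is also correct and is the right mechanism: any competing left-definite operator $S$ satisfies $S\subseteq T_{s}$ by exactly the same composition rule (since $Sx=Tx$ must land in $H_{s}=\mathcal{D}(T^{s/2})$), and a self-adjoint operator admits no proper self-adjoint extension, so $S=T_{s}$. The only hypotheses you lean on beyond the spectral theorem are the two facts (a) and (b), both standard and correctly sketched, and the prior existence/uniqueness theorem for $H_{s}$, which the paper does state immediately before this result; so your argument fills in a proof the paper delegates to the literature rather than diverging from one it gives.
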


The last theorem that we list in this section shows that there is a
non-trivial left-definite theory only in the unbounded case; see \cite[Section
8]{Littlejohn-Wellman}.

\begin{theorem}
\label{Bounded-Unbounded} Let $H=(V,(\cdot,\cdot))$ be a Hilbert space.
Suppose $T:\mathcal{D}(T)\subset H\rightarrow H$ is a self-adjoint operator
that is bounded below by $rI$ for some $r>0.$ For each $s>0,$ let
$H_{s}=(V_{s},(\cdot,\cdot)_{s})$ and $A_{s}$ denote the $s^{th}$
left-definite space and $s^{th}$ left-definite operator, respectively,
associated with $(H,T).$
\end{theorem}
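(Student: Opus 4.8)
The conclusion I expect to be stated here is the characterization that $T$ is bounded on $H$ if and only if, for some (equivalently, for every) $s>0$, one has $V_s=V$ as sets, and that in this case $\Vert\cdot\Vert_s$ and $\Vert\cdot\Vert$ are equivalent norms on $H$ (so $H_s=H$ topologically) and $A_s=T$ for every $s>0$; in other words, the left-definite scale collapses to $(H,T)$ precisely when $T$ fails to be unbounded. The plan is to prove this by going around these implications, using nothing but the explicit description of the left-definite objects furnished by the Littlejohn--Wellman theorems quoted above together with the spectral theorem. Throughout I would use that $V_s=\mathcal D(T^{s/2})$ with $\Vert x\Vert_s=\Vert T^{s/2}x\Vert$ for $x\in V_s$, that $\mathcal D(A_s)=V_{s+2}$ with $A_s$ a restriction of $T$, and that $T^{s/2}$ is a positive self-adjoint operator with $T^{s/2}\ge r^{s/2}I$ coming from the Borel functional calculus for $T$.

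For the forward implication, suppose $T$ is bounded, so $\sigma(T)\subseteq[r,\Vert T\Vert]$. Then for each $s>0$ the map $\lambda\mapsto\lambda^{s/2}$ is bounded on $\sigma(T)$, hence $T^{s/2}$ is a bounded operator defined on all of $H$ and $V_s=\mathcal D(T^{s/2})=V$. The functional calculus also yields $r^{s/2}\Vert x\Vert\le\Vert T^{s/2}x\Vert\le\Vert T\Vert^{s/2}\Vert x\Vert$, i.e. $r^{s/2}\Vert x\Vert\le\Vert x\Vert_s\le\Vert T\Vert^{s/2}\Vert x\Vert$, so $H_s$ and $H$ agree as sets with equivalent norms. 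Finally $\mathcal D(A_s)=V_{s+2}=V=\mathcal D(T)$ and $A_s\subseteq T$, whence $A_s=T$; and all of this holds for every $s>0$.

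For the converse, suppose $V_s=V$ for some $s>0$, that is, $\mathcal D(T^{s/2})=H$. Since $T^{s/2}$ is self-adjoint it is closed, and a closed operator defined on all of a Hilbert space is bounded by the closed graph theorem; hence $\sigma(T^{s/2})$ is a bounded subset of $[r^{s/2},\infty)$. By the spectral mapping theorem applied to the homeomorphism $\lambda\mapsto\lambda^{s/2}$ of $[r,\infty)$ onto $[r^{s/2},\infty)$ (equivalently, by writing $T=(T^{s/2})^{2/s}$ through the functional calculus), $\sigma(T)$ is bounded as well, so $T$ is bounded. Putting the two directions together closes the loop among all the listed conditions and in particular makes the ``for some $s$'' and ``for all $s$'' formulations equivalent.

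The only step that is more than bookkeeping is the converse, where the closed graph theorem is invoked to upgrade ``everywhere defined'' to ``bounded'' for $T^{s/2}$; the point requiring a little care is simply that $T^{s/2}$ is closed, which is guaranteed because it is self-adjoint, being a real power of the self-adjoint operator $T$ in the sense of the Borel functional calculus. Beyond that, the proof is a direct application of the spectral theorem for (bounded and unbounded) self-adjoint operators, and I do not expect any genuine analytic obstacle: the theorem is a soft statement to the effect that the entire left-definite hierarchy is non-trivial exactly in the unbounded case.
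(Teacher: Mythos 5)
First, note that the paper does not actually prove this theorem: it is quoted verbatim from Littlejohn--Wellman \cite[Section 8]{Littlejohn-Wellman}, so there is no in-paper proof to measure your argument against; what can be assessed is whether your reconstruction proves the statement as it appears in the paper. It does not, because you have guessed only a fragment of the conclusion. The theorem's unbounded case asserts considerably more than ``$V_s$ is a proper subspace of $V$'': it also claims that $V_s\subsetneq V_t$ whenever $0<t<s$, that the inner products $(\cdot,\cdot)_s$ and $(\cdot,\cdot)_t$ are inequivalent for all $s\neq t$ (not merely inequivalent to $(\cdot,\cdot)$), that $\mathcal{D}(T_s)\subsetneq\mathcal{D}(T)$ and $\mathcal{D}(T_s)\subsetneq\mathcal{D}(T_t)$ for $0<t<s$, and that $\sigma_p(T_s)=\sigma_p(T)$ and $\sigma_c(T_s)=\sigma_c(T)$ for every $s>0$. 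None of these follow from the bounded/unbounded dichotomy you establish. For instance, the strict inclusion $V_s\subsetneq V_t$ requires an explicit construction from the spectral theorem: since $\sigma(T)\subset[r,\infty)$ is unbounded, one chooses unit vectors $x_n$ with spectral support in $[\lambda_n,\lambda_n+1]$, $\lambda_n\rightarrow\infty$, and sets $x=\sum c_nx_n$ with, say, $c_n^2=\lambda_n^{-s}$, so that $x\in\mathcal{D}(T^{t/2})$ but $x\notin\mathcal{D}(T^{s/2})$; the spectral identities for $T_s$ require a separate argument relating the resolvent of $T_s$ in $H_s$ to that of $T$ in $H$, which is the real content of the Littlejohn--Wellman theorem and is absent from your outline.

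What you did prove is correct as far as it goes: if $T$ is bounded, the functional calculus gives $r^{s/2}\Vert x\Vert\leq\Vert T^{s/2}x\Vert\leq\Vert T\Vert^{s/2}\Vert x\Vert$, hence $V_s=V$ with equivalent norms, and $\mathcal{D}(T_s)=V_{s+2}=V=\mathcal{D}(T)$ together with $T_s\subseteq T$ forces $T_s=T$; conversely, $\mathcal{D}(T^{s/2})=H$ plus self-adjointness (hence closedness) of $T^{s/2}$ and the closed graph theorem gives boundedness of $T^{s/2}$ and therefore of $T$. This correctly yields parts (1)(i)--(iii) and part (2)(i) of the theorem, but the remaining assertions (2)(ii)--(viii) are genuinely missing, so the proposal as written does not establish the quoted result.
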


\begin{enumerate}
\item[(1)] \textit{Suppose }$T$\textit{ is bounded. Then, for each} $s>0,$

\begin{enumerate}
\item[(i)] $V=V_{s};$

\item[(ii)] \textit{the inner products} $(\cdot,\cdot)$ and $(\cdot,\cdot
)_{s}$ \textit{are equivalent};

\item[(iii)] $T=T_{s}.$
\end{enumerate}

\item[(2)] \textit{Suppose }$T$\textit{ is unbounded}. \textit{Then}

\begin{enumerate}
\item[(i)] $V_{s}$\textit{ is a proper subspace of }$V;$

\item[(ii)] $V_{s}$\textit{ is a proper subspace of }$V_{t}$\textit{ whenever
}$0<t<s;$

\item[(iii)] \textit{the inner products }$(\cdot,\cdot)$\textit{ and }%
$(\cdot,\cdot)_{s}$\textit{ are not equivalent for any }$s>0;$

\item[(iv)] \textit{the inner products }$(\cdot,\cdot)_{s}$\textit{ and
}$(\cdot,\cdot)_{t}$\textit{ are not equivalent for any }$s,t>0,$ $s\neq t;$

\item[(v)] $\mathcal{D}(T_{s})$\textit{ is a proper subspace of }%
$\mathcal{D}(T)$\textit{ for each }$s>0;$

\item[(vi)] $\mathcal{D}(T_{s})$\textit{ is a proper subspace of }%
$\mathcal{D}(T_{t})$\textit{ whenever }$0<t<s;$

\item[(vii)] \textit{the point spectrum of each }$T_{s}$ and $T$ are equal;
that is, $\sigma_{p}(T_{s})=\sigma_{p}(T)$ for each $s>0;$

\item[(viii)] \textit{the continuous spectrum of each }$T_{s}$ and $T$ are
equal; that is, $\sigma_{c}(T_{s})=\sigma_{c}(T)$ for each $s>0.$
\end{enumerate}
\end{enumerate}

From (\ref{Jacobi operator bounded below}), we see that there is a non-trivial
left-definite theory associated with the Jacobi self-adjoint operator $A$
defined in (\ref{Jacobi operator}). It is natural to ask: what are the
left-definite spaces $\{H_{s}\}_{s>0}$ and left-definite operators
$\{T_{s}\}_{s>0}$ associated with $(L_{-1,-1}^{2}(-1,1),A)?$ To answer this,
we observe from Definition \ref{left-definite space definition}(v) that the
$s^{th}$ left-definite inner product $(\cdot,\cdot)_{s}$ is generated by the
$s^{th}$ power of $A,$ or in our case, the $s^{th}$ composite power of the
Lagrangian symmetric form of $\ell\lbrack\cdot]$. Practically speaking, we can
only effectively determine these powers when $s$ is a positive integer.

\section{Left-Definite Theory of the Jacobi Differential Expression when
$\alpha=\beta=-1$\label{Left-definite Jacobi}}

In \cite{EKLWY}, the authors show that, for each $n\in\mathbb{N},$ the
$n^{th}$ composite power of the Jacobi differential expression
(\ref{Jacobi DE}) is given by%
\[
\ell^{n}[y](x)=\sum_{j=0}^{n}(-1)^{j}c_{j}(n,k)\left(  (1-x^{2})^{j}%
y^{(j)}(x)\right)  ^{(j)},
\]
where the coefficients $c_{j}(n,k)$ $(j=0,1,\ldots n)$ are non-negative and
defined as
\begin{equation}
c_{0}(n,k):=\left\{
\begin{array}
[c]{ll}%
0 & \text{if }k=0\\
k^{n} & \text{if }k>0
\end{array}
\right.  \text{ and }c_{j}(n,k):=\left\{
\begin{array}
[c]{ll}%
\genfrac{\{}{\}}{0pt}{}{n}{j}%
_{0} & \text{if }k=0\\
\sum_{r=0}^{n-j}\binom{n}{r}%
\genfrac{\{}{\}}{0pt}{}{n-r}{j}%
_{0}k^{r} & \text{if }k>0.
\end{array}
\right.  \label{c_j(n,k)}%
\end{equation}
In (\ref{c_j(n,k)}), the numbers $%
\genfrac{\{}{\}}{0pt}{}{n}{j}%
_{0}$ are called \textit{Jacobi-Stirling numbers}, defined by%
\[%
\genfrac{\{}{\}}{0pt}{}{n}{j}%
_{0}:=\delta_{n,j}\quad(j=0,1;\text{ }n\in\mathbb{N}_{0}),
\]
and, when $j\geq2$ and $n\in\mathbb{N}_{0},$ by%
\begin{equation}%
\genfrac{\{}{\}}{0pt}{}{n}{j}%
_{0}:=\sum_{r=2}^{j}(-1)^{r+j}\frac{(2r-1)(r-2)!\left[  r(r-1)\right]  ^{n}%
}{r!(j-r)!(j+r-1)!}. \label{Jacobi-Stirling New Notation}%
\end{equation}
The coefficients $c_{j}(n,k)$ were originally defined in \cite{EKLWY} through
the identity
\begin{equation}
\sum\limits_{j=0}^{n}c_{j}(n,k)\frac{m!(m+j-2)!}{(m-j)!(m-2)!}=(m(m-1)+k)^{n}.
\label{Defining Identity}%
\end{equation}
For a discussion of the combinatorial properties of $%
\genfrac{\{}{\}}{0pt}{}{n}{j}%
_{0},$ and the more general Jacobi-Stirling numbers $%
\genfrac{\{}{\}}{0pt}{}{n}{j}%
_{\gamma}$, see the recent papers \cite{Andrews-Gawronski-Littlejohn},
\cite{Andrews-Egge-Gawronski-Littlejohn}, and \cite{Gelineau-Zeng}. The
following table lists some of these Jacobi-Stirling numbers $%
\genfrac{\{}{\}}{0pt}{}{n}{j}%
_{0}$ for small values of $n$ and $j$.%

\[%
\begin{tabular}
[c]{|l|l|l|l|l|l|l|l|l|l|}\hline
$j/n$ & $n=0$ & $n=1$ & $n=2$ & $n=3$ & $n=4$ & $n=5$ & $n=6$ & $n=7$ &
$n=8$\\\hline
$j=0$ & $1$ & $0$ & $0$ & $0$ & $0$ & $0$ & $0$ & $0$ & $0$\\\hline
$j=1$ & $0$ & $1$ & $0$ & $0$ & $0$ & $0$ & $0$ & $0$ & $0$\\\hline
$j=2$ & $0$ & $0$ & $1$ & $2$ & $4$ & $8$ & $16$ & $32$ & $64$\\\hline
$j=3$ & $0$ & $0$ & $0$ & $1$ & $8$ & $52$ & $320$ & $1936$ & $11648$\\\hline
$j=4$ & $0$ & $0$ & $0$ & $0$ & $1$ & $20$ & $292$ & $3824$ & $47824$\\\hline
$j=5$ & $0$ & $0$ & $0$ & $0$ & $0$ & $1$ & $40$ & $1092$ & $25664$\\\hline
$j=6$ & $0$ & $0$ & $0$ & $0$ & $0$ & $0$ & $1$ & $70$ & $3192$\\\hline
$j=7$ & $0$ & $0$ & $0$ & $0$ & $0$ & $0$ & $0$ & $1$ & $112$\\\hline
$j=8$ & $0$ & $0$ & $0$ & $0$ & $0$ & $0$ & $0$ & $0$ & $1$\\\hline
\end{tabular}
\ \ \ \ \ \
\]

\begin{center}
\textbf{Jacobi-Stirling numbers }(e.g. $%
\genfrac{\{}{\}}{0pt}{}{7}{5}%
_{0}=1092$)
\end{center}

For example, if $k=0,$ we see from this table that%
\begin{align*}
\ell^{5}[y](x)  &  =-\left(  \mathbf{1}(1-x^{2})^{5}y^{(5)}(x)\right)
^{(5)}+\left(  \mathbf{20}(1-x^{2})^{4}y^{(4)}(x)\right)  ^{(4)}\\
&  -\left(  \mathbf{52}(1-x^{2})^{3}y^{\prime\prime\prime}(x)\right)
^{\prime\prime\prime}+\left(  \mathbf{8}(1-x^{2})^{2}y^{\prime\prime
}(x)\right)  ^{\prime\prime}.
\end{align*}
It is interesting to note that $%
\genfrac{\{}{\}}{0pt}{}{n}{j}%
_{0}=%
\genfrac{\{}{\}}{0pt}{}{n-1}{j-1}%
_{1};$ the numbers $%
\genfrac{\{}{\}}{0pt}{}{n}{j}%
_{1}$ are called the \textit{Legendre-Stirling numbers} which are the subject
of several recent papers (see, for example, \cite{Andrews-Littlejohn},
\cite{Andrews-Gawronski-Littlejohn}, \cite{Andrews-Egge-Gawronski-Littlejohn},
and \cite{Egge}).

Let $k>0.$ For each $n\in%
\mathbb{N}
,$ define the inner product space
\begin{equation}
H_{n}:=\left(  V_{n};(\cdot,\cdot)_{n}\right)  , \label{H_n}%
\end{equation}
where%
\begin{equation}
V_{n}:=\{f:(-1,1)\rightarrow%
\mathbb{C}
\mid f\in AC_{\mathrm{loc}}^{(n-1)}(-1,1);f^{(j)}\in L^{2}\left(
(-1,1);(1-x^{2}\right)  ^{j-1}),j=0,1,..,n\}
\label{left-definite space definition -1,-1}%
\end{equation}
and%
\begin{equation}
(f,g)_{n}:=%
{\displaystyle\sum\limits_{j=0}^{n}}
c_{j}(n,k)\int_{-1}^{1}f^{(j)}(x)\overline{g}^{(j)}(x)(1-x^{2})^{j-1}dx.
\label{LD IP}%
\end{equation}
For later purposes, we note that
\begin{equation}
(f,g)_{1}=\int_{-1}^{1}(f^{\prime}(x)\overline{g}^{\prime}(x)+kf(x)\overline
{g}(x)(1-x^{2})^{-1})dx. \label{LD IP 1}%
\end{equation}

We shall show that $H_{n}$ is the $n^{th}$ left-definite space associated with
the pair $\left(  L_{-1,-1}^{2}(-1,1);A\right)  ,$ where $A$ is the
self-adjoint Jacobi operator defined in (\ref{Jacobi operator}). Mimicking the
results from \cite{EKLWY}\ \textit{mutatis mutandis}, Theorem
\ref{(2) Hermite}\ follows; for a specific proof see the thesis \cite{Bruder}
of Bruder.

\begin{theorem}
\label{(2) Hermite}Let $k>0$. For each $n\in%
\mathbb{N}
,$ $H_{n}$ is a Hilbert space.
\end{theorem}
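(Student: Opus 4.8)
The plan is to verify the three Hilbert-space axioms for $H_n = (V_n, (\cdot,\cdot)_n)$ directly: that $(\cdot,\cdot)_n$ is a genuine inner product on $V_n$ (positive-definiteness being the nontrivial point), and that $V_n$ is complete in the induced norm. Well-definedness of the integrals in \eqref{LD IP} is immediate from the membership condition $f^{(j)} \in L^2((-1,1);(1-x^2)^{j-1})$ built into the definition of $V_n$, together with the Cauchy--Schwarz inequality applied term by term. Sesquilinearity and conjugate symmetry are formal. For positivity, note each coefficient $c_j(n,k)$ is non-negative (as recorded after \eqref{c_j(n,k)}), and when $k>0$ we have $c_0(n,k) = k^n > 0$ and $c_1(n,k) = \genfrac\{\}{0pt}{}{n}{1}_0 + (\text{positive terms}) \geq \genfrac\{\}{0pt}{}{n-1}{0}_0 \, k^{\,?}$ — in any case $c_1(n,k) > 0$, so that $(f,f)_n \geq c_1(n,k)\int_{-1}^1 |f'(x)|^2\,dx + c_0(n,k)\int_{-1}^1 |f(x)|^2 (1-x^2)^{-1}\,dx$, and this forces $f \equiv 0$ whenever $(f,f)_n = 0$ (the second term alone already does it). Thus $(\cdot,\cdot)_n$ is an inner product.

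The substantive part is completeness. I would take a Cauchy sequence $\{f_m\}$ in $H_n$ and extract limits. Because $c_j(n,k) > 0$ for $j = 0, 1, \ldots, n$ when $k > 0$ (this is where the hypothesis $k>0$ is used — it guarantees \emph{every} coefficient is strictly positive via the $k=0$ Jacobi--Stirling entry plus the positive $k^r$ corrections, together with $c_0 = k^n > 0$), each of the $n+1$ sequences $\{f_m^{(j)}\}$ is Cauchy in $L^2((-1,1);(1-x^2)^{j-1})$ and hence converges to some $g_j$ in that weighted $L^2$ space. The task is to show $g_0 \in V_n$ with $g_0^{(j)} = g_j$ for each $j$, i.e.\ that the weighted-$L^2$ limits are compatible with differentiation. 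I would establish this by the standard argument: on any compact subinterval $[a,b] \subset (-1,1)$ the weights $(1-x^2)^{j-1}$ are bounded above and below by positive constants, so convergence in $L^2((-1,1);(1-x^2)^{j-1})$ implies convergence in $L^2[a,b]$, hence (after passing to a subsequence) $f_m^{(j)} \to g_j$ in $L^1[a,b]$; then writing $f_m^{(j-1)}(x) = f_m^{(j-1)}(c) + \int_c^x f_m^{(j)}(t)\,dt$ for a fixed $c \in (a,b)$ and passing to the limit shows $g_{j-1}$ agrees a.e.\ with an absolutely continuous function whose derivative is $g_j$. Iterating downward from $j=n$ gives $g_0 \in AC^{(n-1)}_{\mathrm{loc}}(-1,1)$ with $g_0^{(j)} = g_j \in L^2((-1,1);(1-x^2)^{j-1})$, so $g_0 \in V_n$, and then $\|f_m - g_0\|_n^2 = \sum_{j=0}^n c_j(n,k)\|f_m^{(j)} - g_j\|^2_{L^2((-1,1);(1-x^2)^{j-1})} \to 0$.

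The main obstacle is the compatibility step just described — making sure the weighted-$L^2$ limits of the derivatives really are the derivatives of the limit of the functions, uniformly in $j$, despite the weights degenerating at $x = \pm 1$. The degeneracy is harmless because it is handled entirely on compact subintervals where the weights are comparable to Lebesgue measure; the global weighted-$L^2$ control is only needed at the very end to conclude norm convergence. I would present this carefully but note that it parallels the Hermite/Laguerre arguments in \cite{EKLWY} and the treatment in \cite{Bruder}, so the exposition can be kept brief. One should also remark that the strict positivity of all $c_j(n,k)$, which is exactly what fails when $k = 0$ (there $c_0 = 0$), is precisely the reason the hypothesis $k > 0$ appears in the statement; the $k = 0$ case requires the separate left-definite machinery and is not claimed here.
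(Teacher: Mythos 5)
The paper offers no proof of this theorem at all---it defers to \cite{EKLWY} and to Bruder's thesis \cite{Bruder}---and your argument (verification of the inner-product axioms, then completeness by extracting the weighted-$L^{2}$ limit $g_{j}$ of each $\{f_{m}^{(j)}\}$ and checking on compact subintervals, where the weights $(1-x^{2})^{j-1}$ are comparable to Lebesgue measure, that these limits are compatible with differentiation, so that $g_{0}\in V_{n}$ and $\Vert f_{m}-g_{0}\Vert_{n}\rightarrow0$) is precisely the standard argument carried out in those references, and it is correct. One small repair to your justification that every $c_{j}(n,k)>0$ when $k>0$ (which your completeness step genuinely needs, since you must control each derivative separately): this is not because the $k=0$ Jacobi--Stirling entry is positive---$\genfrac{\{}{\}}{0pt}{}{n}{j}_{0}$ vanishes, for instance, when $j=1<n$---but because the $r=n-j$ term of the sum in (\ref{c_j(n,k)}) equals $\binom{n}{n-j}\genfrac{\{}{\}}{0pt}{}{j}{j}_{0}k^{n-j}=\binom{n}{j}k^{n-j}>0$.
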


It is clear, from (\ref{c_j(n,k)}) and the non-negativity of each
$c_{j}(n,k),$ that
\begin{equation}
\left(  f,f\right)  _{n}=\sum\limits_{j=0}^{n}c_{j}(n,k)\left\Vert
f^{(j)}\right\Vert _{j-1,j-1}^{2}\geq c_{0}(n,k)\left\Vert f\right\Vert
_{-1,-1}^{2}=k^{n}\left(  f,f\right)  _{-1,-1}\quad\left(  f\in H_{n}\right)
. \label{LD Inequality}%
\end{equation}
\ 

Since the proofs of the next two results are similar to, respectively, the
proofs given in Theorem 7 and Lemma 8 in \cite{BruderLittlejohn}, we omit them.

\begin{theorem}
\label{(3) Hermite}The Jacobi polynomials $\left\{  P_{m}^{(-1,-1)}\right\}
_{m=2}^{\infty}$ form a complete orthogonal set in $H_{n}$ for each
$n\in\mathbb{N}.$ Equivalently, the vector space $\mathcal{P}_{-1}[-1,1],$
defined in Lemma \ref{completeness -1-1}, is dense in $H_{n}.$
\end{theorem}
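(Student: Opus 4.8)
The plan is to establish two things: first, that each $P_m^{(-1,-1)}$ with $m\geq 2$ lies in $V_n$ and that distinct such polynomials are $(\cdot,\cdot)_n$-orthogonal; second, that their span is dense in $H_n$. For the membership and orthogonality, I would recall from Lemma \ref{completeness -1-1} that $P_m^{(-1,-1)}(x)$ is a nonzero multiple of $(1-x^2)P_{m-2}^{(1,1)}(x)$, so each is a polynomial vanishing at $\pm 1$; hence all derivatives are bounded on $[-1,1]$ and the integrability condition $f^{(j)}\in L^2((-1,1);(1-x^2)^{j-1})$ is automatic for $j\geq 1$, while for $j=0$ the vanishing at $\pm 1$ together with a factor of $(1-x^2)$ handles the weight $(1-x^2)^{-1}$. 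Thus $\mathcal{P}_{-1}[-1,1]\subset V_n$. Orthogonality then follows because $(\cdot,\cdot)_n$ is, up to the constants $c_j(n,k)$, built from the forms appearing in Dirichlet's formula iterated: using that $P_m^{(-1,-1)}\in\mathcal{D}(A)$, the SLP and Dirichlet conditions of Theorem \ref{SLP and D} (and their iterates from the composite-power formula for $\ell^n$) let one integrate by parts with no boundary terms, so that
\[
(P_m^{(-1,-1)},P_r^{(-1,-1)})_n = (\ell^n[P_m^{(-1,-1)}],P_r^{(-1,-1)})_{-1,-1} = (m(m-1)+k)^n\,(P_m^{(-1,-1)},P_r^{(-1,-1)})_{-1,-1},
\]
which vanishes for $m\neq r$ by Lemma \ref{completeness -1-1}.

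For completeness (equivalently, density of $\mathcal{P}_{-1}[-1,1]$ in $H_n$), I would argue that $H_n$ is the $n^{\mathrm{th}}$ left-definite space associated with $(L^2_{-1,-1}(-1,1),A)$ — this is the content of the identification $V_n=\mathcal{D}(A^{n/2})$ with matching inner product, obtained by mimicking \cite{EKLWY} as stated just before Theorem \ref{(2) Hermite}. Granting that, density follows from Definition \ref{left-definite space definition}(iii): $\mathcal{D}(A^n)$ is dense in $H_n$, and since $A$ has the $P_m^{(-1,-1)}$ as a complete orthogonal set of eigenfunctions in $L^2_{-1,-1}(-1,1)$, the polynomials $\mathcal{P}_{-1}[-1,1]$ are dense in $\mathcal{D}(A^n)$ in the graph norm (hence in the $H_n$ norm). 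Alternatively, and more directly, one can run the Parseval argument: given $f\in H_n$ with $(f,P_m^{(-1,-1)})_n=0$ for all $m\geq 2$, the relation displayed above shows $(f,P_m^{(-1,-1)})_{-1,-1}=0$ for all $m$ (here one needs $f$ in a domain where the integration by parts is justified; this is exactly where the left-definite identification $V_n\subset\mathcal{D}(A^{n/2})$ is used, via a density/continuity extension of the bilinear form), and then completeness in $L^2_{-1,-1}(-1,1)$ from Lemma \ref{completeness -1-1} forces $f=0$.

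The main obstacle is the completeness half, specifically justifying the integration-by-parts identity $(f,P_m^{(-1,-1)})_n=(m(m-1)+k)^n(f,P_m^{(-1,-1)})_{-1,-1}$ for \emph{arbitrary} $f\in H_n$, not merely for polynomials: a general element of $V_n$ need not lie in $\mathcal{D}(A^n)$, and one must control the boundary terms arising from the iterated Dirichlet formula using only the defining conditions $f^{(j)}\in L^2((-1,1);(1-x^2)^{j-1})$. I expect this to require the operator inequality of Theorem \ref{CHEL} (with $\varphi,\psi$ chosen as powers of $(1-x^2)$) to show that the relevant boundary expressions vanish at $x=\pm 1$ — exactly the mechanism used in Section \ref{Right Definite Analysis} for the $n=1$ case — applied inductively in $n$. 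Since the paper explicitly says the proof parallels Theorem 7 and Lemma 8 of \cite{BruderLittlejohn}, in the write-up I would invoke those analogues rather than reproduce the boundary-term analysis, noting only the points where the parameters $\alpha=\beta=-1$ (rather than $\alpha>-1,\beta=-1$) change the weights and the relevant case of Theorem \ref{CHEL}.
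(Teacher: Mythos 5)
The paper itself contains no proof of this theorem: it is omitted with a citation to Theorem 7 of \cite{BruderLittlejohn}, so your ultimate fallback (invoking that analogue) is exactly what the paper does, and your treatment of membership in $V_n$ and of orthogonality (via the identity $(P_m^{(-1,-1)},P_r^{(-1,-1)})_n=(m(m-1)+k)^n(P_m^{(-1,-1)},P_r^{(-1,-1)})_{-1,-1}$, i.e.\ Lemma \ref{Colorado}, whose polynomial-only proof is independent of the present theorem) is fine.

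The genuine problem is your first route to completeness. Asserting that $H_n$ is the $n^{\mathrm{th}}$ left-definite space associated with $(L_{-1,-1}^{2}(-1,1),A)$ and then quoting Definition \ref{left-definite space definition}(iii) is circular in this paper's logical order: that identification is Theorem \ref{Left-Definite Jacobi Theorem}, and its proof of property (iii) consists precisely of citing the present theorem (the remark before Theorem \ref{(2) Hermite} only covers the Hilbert-space property, not the identification $V_n=\mathcal{D}(A^{n/2})$, which is essentially as hard as the density statement itself; likewise your claim that graph-norm density of $\mathcal{P}_{-1}[-1,1]$ in $\mathcal{D}(A^n)$ implies $H_n$-density already presupposes property (v) beyond polynomials). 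Your second, Parseval-type route is sound in outline, but be aware that the identity $(f,P_m^{(-1,-1)})_n=(m(m-1)+k)^n(f,P_m^{(-1,-1)})_{-1,-1}$ for \emph{all} $f\in V_n$ is genuinely stronger than Lemma \ref{Colorado}: one must first extract pointwise endpoint behaviour of the lower derivatives of $f$ (for instance $f(\pm1)=0$ and $(1-x^2)f'(x)\rightarrow 0$, etc.) from the square-integrability conditions defining $V_n$, by Cauchy--Schwarz or Theorem \ref{CHEL}, before the iterated integrations by parts lose their boundary terms; this is where all the work lies. A cleaner route, and the one standard in this literature (compare \cite{EKLWY} and the analogous Legendre/Laguerre left-definite papers), avoids extending the operator identity beyond polynomials altogether: given $f\in V_n$, approximate $f^{(n)}$ by a polynomial in $L^{2}((-1,1);(1-x^{2})^{n-1})$, integrate up $n$ times with the constants chosen so that the resulting polynomial lies in $\mathcal{P}_{-1}[-1,1]$, and use Theorem \ref{CHEL} to control the lower-order terms of the $(\cdot,\cdot)_n$-norm by $\Vert f^{(n)}-q\Vert$. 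If, as the paper does, you simply defer to \cite{BruderLittlejohn}, then drop the left-definite-identification argument from the write-up, since as stated it begs the question.
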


\begin{lemma}
\label{Colorado}For $n\geq2$ and $p,q\in\mathcal{P}_{-1}[-1,1],$%
\[
(p,q)_{n}=\left(  A^{n}p,q\right)  _{-1,-1}.
\]

\end{lemma}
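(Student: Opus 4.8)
The plan is to proceed by induction on $n\ge 2$, the base cases $n=2$ and (if needed) $n=3$ being handled directly, and the inductive step reducing $(p,q)_n$ to $(A^{n}p,q)_{-1,-1}$ by peeling off one power of $A$. The underlying identities are: first, that the $n^{th}$ left-definite inner product $(\cdot,\cdot)_n$ defined in (\ref{LD IP}) is, for sufficiently smooth functions vanishing appropriately at $\pm 1$, equal to $(\ell^{n}[p],q)_{-1,-1}$; and second, that for $p\in\mathcal{P}_{-1}[-1,1]$ we have $\ell^{n}[p]=A^{n}p$, where the right-hand side means the operator $A$ from (\ref{Jacobi operator}) applied $n$ times. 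The second point requires knowing that $A$ maps $\mathcal{P}_{-1}[-1,1]$ into itself: indeed $\ell[\cdot]$ preserves polynomial degree (it sends a polynomial of degree $m$ to one of degree $m$ by (\ref{Jacobi DE})), and by Lemma \ref{completeness -1-1} together with the eigenvalue equation $\ell[P_m^{(-1,-1)}]=(m(m-1)+k)P_m^{(-1,-1)}$, the expression $\ell[\cdot]$ maps the span of $\{P_m^{(-1,-1)}\}_{m\ge 2}$ (i.e.\ $\mathcal{P}_{-1}[-1,1]$) into itself. Hence $A^{n}p$ is a well-defined element of $\mathcal{P}_{-1}[-1,1]$ and lies in $\mathcal{D}(A)=\Delta$ along with all its intermediate iterates.

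The mechanism for the induction is Dirichlet's formula (\ref{Dirichlet's formula}): for $f,g\in\Delta$,
\[
(\ell[f],g)_{-1,-1}=-f'(x)\overline{g}(x)\Big|_{-1}^{1}+\int_{-1}^{1}\big[f'(x)\overline{g}'(x)+k(1-x^{2})^{-1}f(x)\overline{g}(x)\big]\,dx,
\]
and by Theorem \ref{SLP and D} the boundary term vanishes for $f,g\in\Delta$. Comparing with (\ref{LD IP 1}) this says precisely that $(\ell[f],g)_{-1,-1}=(f,g)_1$ for $f,g\in\Delta$, and in particular $(Ap,q)_{-1,-1}=(p,q)_1$ for $p,q\in\mathcal{P}_{-1}[-1,1]$; this is the $n=1$ version of the claimed identity (though the lemma is only asserted for $n\ge 2$, this relation is the engine). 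For general $n$, I would write $(p,q)_n$ using the explicit form (\ref{LD IP}) of the $n^{th}$ inner product, integrate by parts $j$ times in each summand to move all derivatives onto $p$, and recognize the result as $(\ell^{n}[p],q)_{-1,-1}$ using the expansion $\ell^{n}[y]=\sum_{j=0}^{n}(-1)^{j}c_j(n,k)\big((1-x^{2})^{j}y^{(j)}\big)^{(j)}$ from Section \ref{Left-definite Jacobi}; all boundary terms produced in these integrations by parts vanish because $p,q$ and their relevant derivatives are controlled by the $\Delta$-membership of the iterates $A^{i}p$ and the strong limit-point / Dirichlet conditions of Theorem \ref{SLP and D} (here one uses that $p,\,Ap,\,\dots,\,A^{n-1}p\in\Delta$, not merely $p\in\Delta$, which is why higher $n$ needs care).

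The main obstacle is the careful bookkeeping of boundary terms in the repeated integration by parts: each application of $\big((1-x^{2})^{j}y^{(j)}\big)^{(j)}$ against $\overline{q}$ generates $j$ boundary contributions of the form $\big[(1-x^{2})^{j}p^{(j)}\big]^{(i)}\overline{q}^{(j-1-i)}\big|_{-1}^{1}$, and one must verify each of these limits is zero. The factor $(1-x^{2})^{j}$ with $j\ge 1$ kills many of them outright, and the remaining ones follow from the Dirichlet condition ($p',q'\in L^2(-1,1)$, so $p,q\in AC[-1,1]$ with $p(\pm1)=q(\pm1)=0$ by Lemmas \ref{Dirichlet -1,-1}–\ref{f(plusminus 1)=0}) applied to the appropriate iterated expressions; the cleanest route is to do the case $n=2$ explicitly to expose the pattern and then run the induction $(p,q)_{n}=(p,q)_{n-1}\text{-type reduction}$ by factoring $\ell^{n}[p]=\ell[\ell^{n-1}[p]]$ and invoking the $n=1$ identity $(\ell[f],g)_{-1,-1}=(f,g)_1$ with $f=\ell^{n-1}[p]=A^{n-1}p\in\Delta$, combined with the already-established structural identity $(p,q)_n=(\ell^{n-1}[p],q)_1$-style relation among the left-definite inner products inherited from the composite-power formula. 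Once the boundary terms are dispatched, the identity $(p,q)_n=(A^{n}p,q)_{-1,-1}$ falls out; alternatively, and perhaps most economically, one observes that both sides are continuous sesquilinear forms on $\mathcal{P}_{-1}[-1,1]$ that agree on the orthogonal basis $\{P_m^{(-1,-1)}\}_{m\ge2}$ — since $(P_m^{(-1,-1)},P_r^{(-1,-1)})_n=(m(m-1)+k)^{n}\|P_m^{(-1,-1)}\|_{-1,-1}^{2}\delta_{m,r}$ by Theorem \ref{(3) Hermite} and the eigenvalue equation, which equals $(A^{n}P_m^{(-1,-1)},P_r^{(-1,-1)})_{-1,-1}$ exactly — and hence agree everywhere on $\mathcal{P}_{-1}[-1,1]$ by bilinearity.
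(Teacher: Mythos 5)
Your primary route --- expand $(A^{n}p,q)_{-1,-1}$ via the symmetric composite-power expansion of $\ell^{n}[\cdot]$ and integrate by parts $j$ times in the $j^{th}$ summand --- is sound and is essentially the argument the paper has in mind (the paper omits the proof, deferring to Lemma 8 of \cite{BruderLittlejohn}). Two bookkeeping points, however. First, the expansion must be used in its Lagrangian symmetric form relative to the weight $(1-x^{2})^{-1}$: what sits under the integral in $(A^{n}p,q)_{-1,-1}$ is $w_{-1,-1}\,\ell^{n}[p]\,\overline{q}=\sum_{j=0}^{n}(-1)^{j}c_{j}(n,k)\bigl((1-x^{2})^{j-1}p^{(j)}\bigr)^{(j)}\overline{q}$, and after $j$ integrations by parts each summand reproduces exactly the weight $(1-x^{2})^{j-1}$ appearing in (\ref{LD IP}); if you integrate the formula as you quote it (with $(1-x^{2})^{j}$) against $\overline{q}\,(1-x^{2})^{-1}$, the derivatives land on $\overline{q}\,(1-x^{2})^{-1}$ and you do not arrive at $(p,q)_{n}$ without further manipulation. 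Second, for $p,q\in\mathcal{P}_{-1}[-1,1]$ none of the SLP/Dirichlet machinery of Theorem \ref{SLP and D}, nor $\Delta$-membership of the iterates $A^{i}p$, is needed to kill the boundary terms: every term $\bigl[(1-x^{2})^{j-1}p^{(j)}\bigr]^{(i)}\overline{q}^{\,(j-1-i)}\Big|_{-1}^{1}$ with $i\leq j-2$ retains a factor of $(1-x^{2})$, and the term with $i=j-1$ vanishes because $q(\pm1)=0$; everything in sight is a polynomial on $[-1,1]$. Your observation that $\ell[\cdot]$ maps $\mathcal{P}_{-1}[-1,1]$ into itself, so that $A^{n}p=\ell^{n}[p]\in\mathcal{P}_{-1}[-1,1]$, is correct and is indeed needed.

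The alternative you call ``most economical'' is circular as stated. Theorem \ref{(3) Hermite} gives only completeness and orthogonality of $\{P_{m}^{(-1,-1)}\}_{m\geq2}$ in $H_{n}$; it does not supply the diagonal values $(P_{m}^{(-1,-1)},P_{m}^{(-1,-1)})_{n}$, and the eigenvalue equation evaluates only the right-hand side $(A^{n}P_{m}^{(-1,-1)},P_{m}^{(-1,-1)})_{-1,-1}=(m(m-1)+k)^{n}\|P_{m}^{(-1,-1)}\|_{-1,-1}^{2}$. The equality of these two diagonal quantities is precisely Lemma \ref{Colorado} with $p=q=P_{m}^{(-1,-1)}$, so it cannot be cited in its own proof. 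To make the basis argument honest you must compute $(P_{m}^{(-1,-1)},P_{r}^{(-1,-1)})_{n}$ independently, which is exactly what the derivative identity (\ref{(1) Colorado}), the orthogonality relation (\ref{(4) Colorado}), and the defining identity (\ref{Defining Identity}) are set up to do (this is how the analogous computation runs in \cite{EKLWY} and \cite{BruderLittlejohn}); with that in hand, sesquilinearity on $\mathcal{P}_{-1}[-1,1]=\mathrm{span}\{P_{m}^{(-1,-1)}\}_{m\geq2}$ does finish the proof. Likewise, the induction-via-Dirichlet variant rests on an unproved relation of the type $(p,q)_{n}=(\ell^{n-1}[p],q)_{1}$, which is of the same difficulty as the lemma itself. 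So keep the integration-by-parts route, with the corrected weight bookkeeping, and carry it out explicitly.
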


We are now in position to prove the main result of this section.

\begin{theorem}
\label{Left-Definite Jacobi Theorem}For $k>0,$ let%
\[
A:\mathcal{D}\left(  A\right)  \subset L^{2}\left(  (-1,1);(1-x^{2}%
)^{-1}\right)  \rightarrow L^{2}\left(  (-1,1);(1-x^{2})^{-1}\right)
\]
be the Jacobi self-adjoint operator defined in $($\ref{Jacobi operator}$)$
having the Jacobi polynomials $\left\{  P_{m}^{(-1,-1)}\right\}
_{m=2}^{\infty}$ as eigenfunctions. For each $n\in%
\mathbb{N}
$, let $H_{n},$ $V_{n},$ and $(\cdot,\cdot)_{n}$ be as given in $($%
\ref{H_n}$),$ $($\ref{left-definite space definition -1,-1}$),$ and
$($\ref{LD IP}$),$ respectively. Then $H_{n}$ is the $n^{th}$-left-definite
space associated with $\left(  L^{2}\left(  (-1,1);(1-x^{2}\right)
^{-1}),A\right)  $. Moreover, the Jacobi polynomials $\left\{  P_{m}%
^{(-1,-1)}\right\}  _{m=2}^{\infty}$ form a complete orthogonal set in each
$H_{n}$; specifically, they satisfy the orthogonality relation%
\[
\left(  P_{m}^{(-1,-1)},P_{l}^{(-1,-1)}\right)  _{n}=(m(m-1)+k)^{n}%
\delta_{m,l.}%
\]
Furthermore, define%
\[
B_{n}:=\mathcal{D}\left(  B_{n}\right)  \subset H_{n}\rightarrow H_{n}%
\]
by%
\[
B_{n}f:=\ell\left[  f\right]  \text{ \ \ \ \ }\left(  f\in\mathcal{D}\left(
B_{n}\right)  :=V_{n+2}\right)  .
\]
Then $B_{n}$\ is the $n^{th}$ left-definite $($self-adjoint$)$ operator
associated with the pair \noindent$\left(  L_{-1,-1}^{2}(-1,1),A\right)  $.
Lastly, the spectrum of $B_{n}$\ is given by%
\[
\sigma\left(  B_{n}\right)  =\left\{  m(m-1)+k\mid m=2,3,4,\ldots\right\}
=\sigma(A).
\]

\end{theorem}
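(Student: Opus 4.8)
The plan is to verify the five defining conditions of Definition~\ref{left-definite space definition} for the pair $(L_{-1,-1}^2(-1,1),A)$ with $r=k^n$ (recall from~(\ref{Jacobi operator bounded below}) that $A\geq kI$, so $A^n\geq k^nI$), and then invoke the uniqueness half of the Littlejohn--Wellman theorems to identify $H_n$ with the $n^{th}$ left-definite space and $B_n$ with the $n^{th}$ left-definite operator. Condition (i) (that $H_n$ is a Hilbert space) is exactly Theorem~\ref{(2) Hermite}. Condition (iv) (the inequality $(f,f)_n\geq k^n(f,f)_{-1,-1}$ for $f\in V_n$) is exactly~(\ref{LD Inequality}). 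So the real content is in conditions (ii), (iii), and (v), together with the orthogonality computation and the spectral statement.

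\textbf{Key steps.} First I would establish (v): $(f,g)_n=(A^nf,g)_{-1,-1}$ for all $f\in\mathcal D(A^n)$ and $g\in V_n$. Lemma~\ref{Colorado} gives this when $f,g\in\mathcal P_{-1}[-1,1]$; to pass to general $f\in\mathcal D(A^n)$ one integrates by parts in the self-adjoint form $\ell^n[y]=\sum_j(-1)^jc_j(n,k)\big((1-x^2)^jy^{(j)}\big)^{(j)}$, using the strong limit-point/Dirichlet boundary behaviour (Theorem~\ref{SLP and D}) and Lemma~\ref{f(plusminus 1)=0} to kill all boundary terms at $x=\pm1$, so that $\int_{-1}^1\ell^n[f]\overline g\,(1-x^2)^{-1}\,dx=\sum_j c_j(n,k)\int_{-1}^1 f^{(j)}\overline g^{(j)}(1-x^2)^{j-1}\,dx=(f,g)_n$; one also needs to know $\mathcal D(A^n)\subset V_n$, which follows from the same integration-by-parts bookkeeping showing each $f^{(j)}\in L^2((-1,1);(1-x^2)^{j-1})$. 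Next, (ii) $\mathcal D(A^n)\subset V_n$ is then in hand. For (iii), density of $\mathcal D(A^n)$ in $H_n$: since $\mathcal P_{-1}[-1,1]\subset\mathcal D(A^n)$ (the $P_m^{(-1,-1)}$, $m\geq2$, are eigenfunctions of $A$, hence of $A^n$) and $\mathcal P_{-1}[-1,1]$ is already dense in $H_n$ by Theorem~\ref{(3) Hermite}, density of the larger set is immediate. This gives all five conditions, so by the uniqueness theorem $H_n$ is \emph{the} $n^{th}$ left-definite space.

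\textbf{Orthogonality and spectrum.} The orthogonality relation follows by combining Theorem~\ref{(3) Hermite} (the $P_m^{(-1,-1)}$ form a complete orthogonal set in $H_n$) with (v) applied to $f=g=P_m^{(-1,-1)}$: since $A^nP_m^{(-1,-1)}=(m(m-1)+k)^nP_m^{(-1,-1)}$ and $\|P_m^{(-1,-1)}\|_{-1,-1}^2=1$ (by the normalization of Lemma~\ref{completeness -1-1} / Section~\ref{Preliminaries}), we get $(P_m^{(-1,-1)},P_l^{(-1,-1)})_n=(m(m-1)+k)^n(P_m^{(-1,-1)},P_l^{(-1,-1)})_{-1,-1}=(m(m-1)+k)^n\delta_{m,l}$. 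For $B_n$: the Littlejohn--Wellman existence/uniqueness theorem for left-definite operators gives a unique such operator with domain $V_{n+2}$ and action agreeing with $A$; since $\ell[\cdot]$ is precisely the differential expression generating $A$, the operator $f\mapsto\ell[f]$ on $V_{n+2}$ is that operator. Finally, $\sigma(B_n)=\sigma(A)=\{m(m-1)+k\mid m\geq2\}$ is immediate from Theorem~\ref{Bounded-Unbounded}(2)(vii)--(viii) (point and continuous spectra of every left-definite operator coincide with those of $A$, and $\sigma(A)$ is purely discrete as recorded in Section~\ref{Right Definite Analysis}).

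\textbf{Main obstacle.} The delicate step is the integration-by-parts argument underpinning (v) and (ii): one must show that \emph{all} boundary terms arising from $n$ successive integrations by parts of $\big((1-x^2)^jf^{(j)}\big)^{(j)}$ against $\overline g$ vanish at $x=\pm1$ for arbitrary $f\in\mathcal D(A^n)$, $g\in V_n$ — not just for polynomials. This requires careful control of the growth of $f^{(i)}$ and $g^{(i)}$ near the endpoints; the weights $(1-x^2)^j$ help, but the intermediate terms $\big((1-x^2)^jf^{(j)}\big)^{(i)}$ for $i<j$ are not individually weighted and one must track that the product $\big((1-x^2)^jf^{(j)}\big)^{(i)}\cdot\overline{g}^{(j-1-i)}$ still tends to $0$. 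The operator inequality of Theorem~\ref{CHEL}, applied iteratively as in the proof of Lemma~\ref{Dirichlet -1,-1}, together with the SLP/Dirichlet conditions of Theorem~\ref{SLP and D}, is the tool for this; since the excerpt signals that such computations were carried out in~\cite{EKLWY} and~\cite{BruderLittlejohn} \emph{mutatis mutandis}, I would cite those and present only the structure of the endpoint estimate here.
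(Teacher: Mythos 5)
Your overall skeleton (verify the five conditions of Definition \ref{left-definite space definition} with $r=k$, so $r^n=k^n$, then invoke the Littlejohn--Wellman uniqueness theorems; get the orthogonality from the polynomial identity and the spectrum from Theorem \ref{Bounded-Unbounded}) matches the paper, and your treatment of (i), (iii), (iv), the orthogonality relation, and $\sigma(B_n)$ is essentially the paper's. The gap is in how you propose to get (ii) and (v). You want to prove $(f,g)_n=(A^nf,g)_{-1,-1}$ for \emph{arbitrary} $f\in\mathcal D(A^n)$, $g\in V_n$ by $n$-fold integration by parts of the Lagrangian symmetric form of $\ell^n[\cdot]$, killing the boundary terms with Theorem \ref{SLP and D}, and you assert that $\mathcal D(A^n)\subset V_n$ "follows from the same bookkeeping." But the SLP/Dirichlet results of Section \ref{Right Definite Analysis} are first-order statements about the maximal domain $\Delta$ of $\ell[\cdot]$; they say nothing about the behaviour at $\pm1$ of $f'',f''',\dots$ or of the unweighted intermediate terms $\bigl((1-x^2)^jf^{(j)}\bigr)^{(i)}\overline g^{\,(j-1-i)}$ for $f\in\mathcal D(A^n)$, nor do they give $f^{(j)}\in L^2((-1,1);(1-x^2)^{j-1})$. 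Establishing those higher-order Dirichlet/limit-point facts is precisely the hard analysis your plan defers, and the references you lean on do not supply it in that form: \cite{EKLWY} and \cite{BruderLittlejohn} prove the corresponding step by the same soft argument this paper uses, not by direct endpoint estimates for general domain functions.

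The paper's route avoids all of this. For (ii) it takes $f\in\mathcal D(A^n)$, expands it in the orthonormal Jacobi polynomials, and notes that the partial sums $p_j$ satisfy $p_j\to f$ and $A^np_j\to A^nf$ in $L^2_{-1,-1}(-1,1)$ (the Fourier coefficients of $A^nf$ are $(m(m-1)+k)^nc_m$ by self-adjointness); then Lemma \ref{Colorado} gives $\|p_j-p_r\|_n^2=\bigl(A^n[p_j-p_r],p_j-p_r\bigr)_{-1,-1}\to0$, so $\{p_j\}$ is Cauchy in the \emph{complete} space $H_n$ (Theorem \ref{(2) Hermite}); its $H_n$-limit $g$ coincides with $f$ because, by (\ref{LD Inequality}), $H_n$-convergence implies $L^2_{-1,-1}$-convergence. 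Condition (v) is then obtained for general $f\in\mathcal D(A^n)$, $g\in V_n$ by approximating both by polynomials ($q_j\to g$ in $H_n$ via Theorem \ref{(3) Hermite}) and passing to the limit in Lemma \ref{Colorado}. So the only place where integration by parts with boundary terms is needed is the polynomial identity of Lemma \ref{Colorado}, where everything vanishes trivially because $p(\pm1)=0$ and polynomials are smooth. To repair your proposal, replace the direct integration-by-parts argument for (ii) and (v) by this approximation argument; as written, the key analytic step of your proof is missing rather than merely outsourced.
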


\begin{proof}
Fix $n\in\mathbb{N}$. We need to show that $H_{n}$ satisfies the five
properties given in Definition \ref{left-definite space definition}.\smallskip

\noindent\textbf{(i)} $H_{n}$ is a Hilbert space; this is the statement given
in Theorem \ref{(2) Hermite}.\medskip\ \newline\noindent\textbf{(ii)} We need
to show that $\mathcal{D}\left(  A^{n}\right)  \subset V_{n}.$ Let
$f\in\mathcal{D}\left(  A^{n}\right)  $. Since the Jacobi polynomials
$\left\{  P_{m}^{(-1,-1)}\right\}  _{m=2}^{\infty}$ form a complete
orthonormal set in $L_{-1,-1}^{2}(-1,1)$, we see that%
\begin{equation}
p_{j}\rightarrow f\text{ \ \ \ \ in }L_{-1,-1}^{2}(-1,1)\text{ as
}j\rightarrow\infty, \label{star Beethoven}%
\end{equation}
where%
\[
p_{j}(t)=\sum\limits_{m=2}^{j}c_{m}P_{m}^{(-1,-1)}(t)\text{ \ \ \ \ }\left(
t\in(-1,1)\right)  ,
\]
and%
\[
c_{m}:=\left(  f,P_{m}^{(-1,-1)}\right)  _{-1,-1}=\int_{-1}^{1}f(t)P_{m}%
^{(-1,-1)}(t)\left(  1-t^{2}\right)  ^{-1}dt\text{ \ \ \ \ }(m\geq2).
\]
Since $A^{n}f\in L_{-1,-1}^{2}(-1,1)$, we see that, as $j\rightarrow\infty,$%
\[
\sum\limits_{m=2}^{j}\widetilde{c}_{m}P_{m}^{(-1,-1)}\rightarrow A^{n}f\text{
\ \ \ \ in }L_{-1,-1}^{2}(-1,1)
\]
However, from the self-adjointness of $A^{n},$ we find that%
\begin{align*}
\widetilde{c}_{m}  &  :=\left(  A^{n}f,P_{m}^{(-1,-1)}\right)  _{-1,-1}%
=\left(  f,A^{n}P_{m}^{(-1,-1)}\right)  _{-1,-1}\\
&  =(m(m-1)+k)^{n}\left(  f,P_{m}^{(-1,-1)}\right)  _{-1,-1}\\
&  =(m(m-1)+k)^{n}c_{m};
\end{align*}
Consequently,%
\[
A^{n}p_{j}\rightarrow A^{n}f\text{ \ \ \ \ in }L_{-1,-1}^{2}(-1,1)\text{ as
}j\rightarrow\infty.
\]
Moreover, by Lemma \ref{Colorado},%
\begin{align*}
\left(  \left\Vert p_{j}-p_{r}\right\Vert _{n}\right)  ^{2}  &  =\left(
A^{n}\left[  p_{j}-p_{r}\right]  ,p_{j}-p_{r}\right)  _{-1,-1}\\
&  \rightarrow0\text{ \ \ \ \ as }j,r\rightarrow\infty
\end{align*}
i.e. $\left\{  p_{j}\right\}  _{j=0}^{\infty}$ is Cauchy in $H_{n}$. Since
$H_{n}$ is a Hilbert space, there exists $g\in H_{n}\subset L_{-1,-1}%
^{2}(-1,1)$ such that%
\[
p_{j}\rightarrow g\text{ \ \ \ \ in }H_{n}\text{ as }j\rightarrow\infty.
\]
Furthermore, from (\ref{LD Inequality}), we see that
\[
\left\Vert p_{j}-g\right\Vert _{-1,-1}\leq k^{-n/2}\left\Vert p_{j}%
-g\right\Vert _{n},
\]
and hence,%
\begin{equation}
p_{j}\rightarrow g\text{ \ \ \ \ in }L_{-1,-1}^{2}(-1,1).
\label{double star Beethoven}%
\end{equation}
Comparing (\ref{star Beethoven}) and (\ref{double star Beethoven}),
\[
f=g\in H_{n}.
\]
\medskip\newline\noindent\textbf{(iii)} By Theorem \ref{(3) Hermite},
$\mathcal{P}_{-1}[-1,1]$ is dense in $H_{n}.$ Since $\mathcal{P}_{-1}[-1,1]$
$\subset\mathcal{D}\left(  A^{n}\right)  ,$ we see that $\mathcal{D}\left(
A^{n}\right)  $ is dense in $H_{n}$.\medskip\ \newline\noindent\textbf{(iv)}
We already showed, in (\ref{LD Inequality}), that $\left(  f,f\right)
_{n}\geq k^{n}\left(  f,f\right)  _{-1,-1}$ for all $f\in V_{n}.\medskip$
\newline\noindent\textbf{(v) }We need to show that $\left(  f,g\right)
_{n}=\left(  A^{n}f,g\right)  _{-1,-1}$ for $f\in\mathcal{D}\left(
A^{n}\right)  $ and $g\in V_{n}.$ This is true for any $f,g\in\mathcal{P}%
_{-1}[-1,1]$ by Lemma \ref{Colorado}. Let $f\in\mathcal{D}\left(
A^{n}\right)  \subset H_{n}$, $g\in H_{n}.$ Since $\mathcal{P}_{-1}[-1,1]$ is
dense in both $H_{n}$ and $L_{-1,-1}^{2}(-1,1)$ (see Lemma
\ref{completeness -1-1} and Theorem \ref{(3) Hermite}), and (by (\textbf{iv}%
)), convergence in $H_{n}$ implies convergence in $L_{-1,-1}^{2}(-1,1)$, there
exist sequences $\left\{  p_{j}\right\}  _{j=0}^{\infty},\left\{
q_{j}\right\}  _{j=0}^{\infty}$ $\subset\mathcal{P}_{-1}[-1,1]$ such that
\begin{align*}
p_{j}  &  \rightarrow f\text{ \ \ \ \ in }H_{n}\text{ as }j\rightarrow\infty\\
A^{n}p_{j}  &  \rightarrow A^{n}f\text{ \ \ in }L_{-1,-1}^{2}(-1,1)\text{ as
}j\rightarrow\infty\quad(\text{from the proof of part }(ii))
\end{align*}
and%
\[
q_{j}\rightarrow g\text{ \ \ \ \ in }H_{n}\text{ and }L_{-1,-1}^{2}%
(-1,1)\text{ as }j\rightarrow\infty.
\]
Hence,
\begin{align*}
\left(  A^{n}f,g\right)  _{-1,-1}  &  =\lim_{j\rightarrow\infty}\left(
A^{n}p_{j},q_{j}\right)  _{-1,-1}\\
&  =\lim_{j\rightarrow\infty}\left(  p_{j},q_{j}\right)  _{n}\text{ by Lemma
\ref{Colorado}}\\
&  =\left(  f,f\right)  _{n}.
\end{align*}
The results listed in the theorem on the left-definite operator $B_{n}$ and
the spectrum of $B_{n}$ follow immediately from the general left-definite
theory discussed in Section \ref{Left-definite theory}.
\end{proof}

\section{Sobolev Orthogonality and Spectral Theory of the Jacobi
Expression\label{Sobolev Spectral Theory}}

As discussed in Section \ref{Introduction}, any polynomial $p(x)$ of degree
one is a solution of the Jacobi differential equation%
\[
\ell\lbrack y](x):=-(1-x^{2})y^{\prime\prime}(x)+ky(x)=(n(n-1)+k)y(x);
\]
moreover, it is important to note that the Jacobi polynomial $P_{1}%
^{(-1,-1)}(x)$ as defined, say, in \cite{Szego}, is identically zero. If we define%

\begin{equation}
\widetilde{P}_{0}^{(-1,-1)}(x):=1,\quad\widetilde{P}_{1}^{(-1,-1)}%
(x):=x/\sqrt{3}\label{Jacobi polynomials redefined -1,-1}%
\end{equation}
and renormalize the Jacobi polynomials (\ref{Jacobi polynomial definition a,b}%
), for $\alpha=\beta=-1,$ of degree $n\geq2,$ by%
\begin{equation}
\widetilde{P}_{n}^{(-1,-1)}(x):=\frac{\sqrt{4n-2}}{\left(  n-1\right)  }%
{\displaystyle\sum\limits_{j=0}^{n}}
\binom{n-1}{n-j}\binom{n-1}{j}\left(  \frac{x-1}{2}\right)  ^{j}\left(
\frac{x+1}{2}\right)  ^{n-j},\label{Renormalized Jacobi polynomials}%
\end{equation}
then Kwon and Littlejohn prove the following theorem in \cite{Kwon-Littlejohn}.

\begin{theorem}
\label{orthogonal Sobolev}The Jacobi polynomials $\left\{  \widetilde{P}%
_{n}^{(-1,-1)}(x)\right\}  _{n=0}^{\infty}$ , as given in $($%
\ref{Jacobi polynomials redefined -1,-1}$)$ and $($%
\ref{Renormalized Jacobi polynomials}$)$, are orthonormal with respect to the
Sobolev inner product%
\begin{equation}
\phi\left(  f,g\right)  :=\frac{1}{2}f(-1)\overline{g}(-1)+\frac{1}%
{2}f(1)\overline{g}(1)+\int_{-1}^{1}f^{\prime}(x)\overline{g}^{\prime}(x)dx.
\label{phi inner product}%
\end{equation}

\end{theorem}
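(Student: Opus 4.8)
The plan is to verify the orthonormality relation $\phi(\widetilde{P}_{n}^{(-1,-1)},\widetilde{P}_{m}^{(-1,-1)})=\delta_{n,m}$ directly, separating the low-degree interactions from the generic ones and exploiting the vanishing $\widetilde{P}_{n}^{(-1,-1)}(\pm1)=0$ for $n\geq2$. First I would dispose of the degenerate cases $n,m\leq1$ by substitution into $(\ref{phi inner product})$: with $\widetilde{P}_{0}^{(-1,-1)}=1$ and $\widetilde{P}_{1}^{(-1,-1)}=x/\sqrt{3}$ one gets $\phi(\widetilde{P}_{0}^{(-1,-1)},\widetilde{P}_{0}^{(-1,-1)})=\tfrac12+\tfrac12=1$, $\phi(\widetilde{P}_{1}^{(-1,-1)},\widetilde{P}_{1}^{(-1,-1)})=\tfrac16+\tfrac16+\int_{-1}^{1}\tfrac13\,dx=1$, and $\phi(\widetilde{P}_{0}^{(-1,-1)},\widetilde{P}_{1}^{(-1,-1)})=-\tfrac{1}{2\sqrt3}+\tfrac{1}{2\sqrt3}+0=0$. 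The structural fact needed for everything else, which I would record as a short lemma right after Lemma~\ref{completeness -1-1}, is that for $n\geq2$ the polynomial $\widetilde{P}_{n}^{(-1,-1)}$ is a nonzero constant multiple of $(1-x^{2})P_{n-2}^{(1,1)}(x)$; this is read off from $(\ref{Renormalized Jacobi polynomials})$ by factoring $\tfrac{x-1}{2}\cdot\tfrac{x+1}{2}$ out of the sum and recognizing what remains as $P_{n-2}^{(1,1)}(x)$ via $(\ref{Jacobi polynomial alpha, beta})$. In particular $\widetilde{P}_{n}^{(-1,-1)}(\pm1)=0$ for $n\geq2$, so as soon as at least one of $n,m$ is $\geq2$ the two boundary terms of $(\ref{phi inner product})$ drop out. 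For $n\in\{0,1\}$ and $m\geq2$ this already settles matters: $\phi$ collapses to $\int_{-1}^{1}(\widetilde{P}_{n}^{(-1,-1)})'(\widetilde{P}_{m}^{(-1,-1)})'\,dx$, which is $0$ when $n=0$ and equals $\tfrac{1}{\sqrt3}\,(\widetilde{P}_{m}^{(-1,-1)}(1)-\widetilde{P}_{m}^{(-1,-1)}(-1))=0$ when $n=1$.

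The heart of the argument is the case $n,m\geq2$. Here the boundary terms of $(\ref{phi inner product})$ vanish, so $\phi(\widetilde{P}_{n}^{(-1,-1)},\widetilde{P}_{m}^{(-1,-1)})=\int_{-1}^{1}(\widetilde{P}_{n}^{(-1,-1)})'(\widetilde{P}_{m}^{(-1,-1)})'\,dx$. To evaluate this integral I would apply Dirichlet's formula $(\ref{Dirichlet's formula})$ on $[a,b]\subset(-1,1)$ with $f=\widetilde{P}_{n}^{(-1,-1)}$, $g=\widetilde{P}_{m}^{(-1,-1)}$, and then let $a\to-1^{+}$, $b\to1^{-}$: the endpoint term $-f'\overline{g}\,\big|_{a}^{b}$ tends to $0$ because $\widetilde{P}_{m}^{(-1,-1)}(\pm1)=0$ and $f'$ stays bounded, and all the integrals converge since their integrands extend to polynomials (each of $f,g$ carries a factor $1-x^{2}$). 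Using $\ell[\widetilde{P}_{n}^{(-1,-1)}]=(n(n-1)+k)\widetilde{P}_{n}^{(-1,-1)}$ this yields
\[
(n(n-1)+k)(\widetilde{P}_{n}^{(-1,-1)},\widetilde{P}_{m}^{(-1,-1)})_{-1,-1}=\int_{-1}^{1}(\widetilde{P}_{n}^{(-1,-1)})'(\widetilde{P}_{m}^{(-1,-1)})'\,dx+k(\widetilde{P}_{n}^{(-1,-1)},\widetilde{P}_{m}^{(-1,-1)})_{-1,-1},
\]
hence $\phi(\widetilde{P}_{n}^{(-1,-1)},\widetilde{P}_{m}^{(-1,-1)})=n(n-1)\,(\widetilde{P}_{n}^{(-1,-1)},\widetilde{P}_{m}^{(-1,-1)})_{-1,-1}$. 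By Lemma~\ref{completeness -1-1} the family $\{\widetilde{P}_{n}^{(-1,-1)}\}_{n\geq2}$ is orthogonal in $L_{-1,-1}^{2}(-1,1)$, so this vanishes whenever $n\neq m$.

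It then remains only to pin down the normalization on the diagonal, i.e. to show $\phi(\widetilde{P}_{n}^{(-1,-1)},\widetilde{P}_{n}^{(-1,-1)})=1$ for $n\geq2$; by the identity just derived this is equivalent to $\|\widetilde{P}_{n}^{(-1,-1)}\|_{-1,-1}^{2}=1/(n(n-1))$. Writing $\widetilde{P}_{n}^{(-1,-1)}=\lambda_{n}(1-x^{2})P_{n-2}^{(1,1)}$, with $\lambda_{n}$ the explicit constant produced by the prefactor $\sqrt{4n-2}/(n-1)$ in $(\ref{Renormalized Jacobi polynomials})$, one has $\|\widetilde{P}_{n}^{(-1,-1)}\|_{-1,-1}^{2}=\lambda_{n}^{2}\int_{-1}^{1}(P_{n-2}^{(1,1)}(x))^{2}w_{1,1}(x)\,dx$, and the integral here is the classical $L^{2}$-normalization constant of $P_{n-2}^{(1,1)}$ (from Szeg\H{o}'s tables, or from $(\ref{(4) Colorado})$ at $j=0$ after reconciling the normalization $(\ref{Jacobi polynomial alpha, beta})$ with $(\ref{Jacobi polynomial definition a,b})$). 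A short computation shows the product is exactly $1/(n(n-1))$; as a sanity check, $\widetilde{P}_{2}^{(-1,-1)}=\tfrac{\sqrt6}{4}(x^{2}-1)$ gives $\|\widetilde{P}_{2}^{(-1,-1)}\|_{-1,-1}^{2}=\tfrac{6}{16}\int_{-1}^{1}(1-x^{2})\,dx=\tfrac12=\tfrac{1}{2\cdot1}$, and $\widetilde{P}_{3}^{(-1,-1)}=\tfrac{\sqrt{10}}{4}(x^{3}-x)$ gives $\|\widetilde{P}_{3}^{(-1,-1)}\|_{-1,-1}^{2}=\tfrac{10}{16}\int_{-1}^{1}x^{2}(1-x^{2})\,dx=\tfrac16=\tfrac{1}{3\cdot2}$.

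The conceptual step — converting the Sobolev inner product into a multiple of the weighted $L_{-1,-1}^{2}$ inner product through Dirichlet's formula and the eigenvalue equation, which works precisely because the Jacobi polynomials of degree $\geq2$ vanish at $\pm1$ — is short and robust, and the orthogonality for $n\neq m$ then falls out of Lemma~\ref{completeness -1-1}. The part where I expect to spend the most care is the constant-bookkeeping in the last paragraph: the paper carries three different scalings of the Jacobi polynomials (the binomial form $(\ref{Jacobi polynomial alpha, beta})$, the $L^{2}$-orthonormal form $(\ref{Jacobi polynomial definition a,b})$, and the Sobolev-orthonormal form $(\ref{Renormalized Jacobi polynomials})$), so one must track the constant $\lambda_{n}$ linking $\widetilde{P}_{n}^{(-1,-1)}$ to $(1-x^{2})P_{n-2}^{(1,1)}$ carefully enough to land on $1/(n(n-1))$ exactly. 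Everything else — the limiting form of Dirichlet's formula $(\ref{Dirichlet's formula})$, the endpoint vanishing, and the appeal to Lemma~\ref{completeness -1-1} — is routine.
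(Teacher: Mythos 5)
Your proposal is correct, but note that the paper itself contains no proof of this theorem: it is quoted from Kwon and Littlejohn \cite{Kwon-Littlejohn}, with only the remark that the key ingredient is $\widetilde{P}_{n}^{(-1,-1)}(\pm1)=0$ for $n\geq2$. Your argument is a self-contained verification built around exactly that ingredient, and it goes through. The reduction, for $n,m\geq2$, of $\phi(\widetilde{P}_{n}^{(-1,-1)},\widetilde{P}_{m}^{(-1,-1)})$ to $n(n-1)\,(\widetilde{P}_{n}^{(-1,-1)},\widetilde{P}_{m}^{(-1,-1)})_{-1,-1}$ via Dirichlet's formula and the eigenvalue equation is sound (for polynomials Dirichlet's formula is plain integration by parts, and the limits $a\to-1^{+}$, $b\to1^{-}$ are harmless since every integrand extends continuously to $[-1,1]$), and off-diagonal vanishing then follows from Lemma \ref{completeness -1-1}; the low-degree and mixed cases are handled correctly. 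The one step you leave as ``a short computation''---that $\|\widetilde{P}_{n}^{(-1,-1)}\|_{-1,-1}^{2}=1/(n(n-1))$---does close: comparing leading coefficients in $(\ref{Jacobi polynomial alpha, beta})$ gives $P_{n}^{(-1,-1)}(x)=\tfrac14(x^{2}-1)P_{n-2}^{(1,1)}(x)$ with both polynomials in the binomial normalization, so
\[
\left\Vert \widetilde{P}_{n}^{(-1,-1)}\right\Vert _{-1,-1}^{2}
=\frac{4n-2}{16(n-1)^{2}}\int_{-1}^{1}\bigl(P_{n-2}^{(1,1)}(x)\bigr)^{2}(1-x^{2})\,dx
=\frac{4n-2}{16(n-1)^{2}}\cdot\frac{8}{2n-1}\cdot\frac{n-1}{n}
=\frac{1}{n(n-1)},
\]
using the classical norm $\frac{2^{3}}{2m+3}\,\frac{\Gamma(m+2)^{2}}{m!\,\Gamma(m+3)}$ of $P_{m}^{(1,1)}$ with $m=n-2$; this is consistent with your checks at $n=2,3$. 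Compared with the paper's citation-only treatment, your route has the advantage of being elementary and explicit using only tools already present in Sections \ref{Preliminaries} and \ref{Right Definite Analysis}, and it makes transparent why the boundary terms in $\phi$ are exactly what compensate for the failure of $\widetilde{P}_{0}^{(-1,-1)},\widetilde{P}_{1}^{(-1,-1)}$ to lie in $L_{-1,-1}^{2}(-1,1)$.
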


A key step in establishing this orthogonality is the fact that $\widetilde{P}%
_{n}^{(-1,-1)}(\pm1)=0$ for $n\geq2$; see (\ref{Value at -1 and +1}).

\begin{definition}
Define%
\[
W:=\left\{  f:\left[  -1,1\right]  \rightarrow%
\mathbb{C}
\mid f\in AC\left[  -1,1\right]  ;f^{\prime}\in L^{2}(-1,1)\right\}
\]
and, with $\phi(\cdot,\cdot)$ being the inner product defined in
$($\ref{phi inner product}$),$ let $\left\Vert f\right\Vert _{\phi}%
:=\phi(f,f)^{1/2}$ $(f\in W)$ be the associated norm.
\end{definition}

\begin{theorem}
$\left(  W,\phi(\cdot,\cdot)\right)  $ is a Hilbert space.
\end{theorem}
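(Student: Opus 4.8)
The plan is to verify the Hilbert space axioms directly, the only substantive one being completeness. First I would note that $\phi(\cdot,\cdot)$ is clearly a sesquilinear form, and that it is positive definite: if $\phi(f,f)=0$ then $\int_{-1}^1|f'|^2=0$ forces $f'=0$ a.e., so $f$ is constant on $[-1,1]$ (here we use $f\in AC[-1,1]$), and then $\frac12|f(-1)|^2+\frac12|f(1)|^2=0$ forces that constant to be $0$. Thus $(W,\phi)$ is a genuine inner product space and $\|\cdot\|_\phi$ is a norm; the work is to show every $\phi$-Cauchy sequence converges in $W$.

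The key observation I would exploit is that for $f\in W$ and $x\in[-1,1]$,
\[
f(x)=f(-1)+\int_{-1}^{x}f'(t)\,dt,
\]
so by the Cauchy--Schwarz inequality
\[
|f(x)|\le |f(-1)|+\Bigl(\int_{-1}^1|f'(t)|^2\,dt\Bigr)^{1/2}\,\sqrt{2}\le C\,\|f\|_\phi
\]
for a fixed constant $C$ (absorbing the factors $\sqrt2$ from $\frac12|f(-1)|^2\le\|f\|_\phi^2$). Hence $\|f\|_\infty\le C\|f\|_\phi$ on $[-1,1]$. Given a $\phi$-Cauchy sequence $\{f_j\}\subset W$, this bound shows $\{f_j\}$ is uniformly Cauchy on $[-1,1]$, so it converges uniformly to some continuous $f$; simultaneously $\{f_j'\}$ is Cauchy in $L^2(-1,1)$, hence converges in $L^2$ to some $h$. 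I would then identify $h$ with $f'$ by passing to the limit in the integral relation $f_j(x)=f_j(-1)+\int_{-1}^{x}f_j'(t)\,dt$: the left side converges to $f(x)$ pointwise (indeed uniformly), $f_j(-1)\to f(-1)$, and $\int_{-1}^x f_j'\to\int_{-1}^x h$ since $L^2(-1,1)$-convergence implies $L^1(-1,x)$-convergence on the finite interval. Therefore $f(x)=f(-1)+\int_{-1}^x h(t)\,dt$, which shows $f\in AC[-1,1]$ with $f'=h\in L^2(-1,1)$, so $f\in W$. Finally $\|f_j-f\|_\phi^2=\frac12|f_j(-1)-f(-1)|^2+\frac12|f_j(1)-f(1)|^2+\|f_j'-h\|_{L^2}^2\to 0$, establishing completeness.

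I do not expect a serious obstacle here; this is the standard argument that a first-order Sobolev-type space with an inner product controlling both $f'$ in $L^2$ and point evaluations is complete. The one point that needs a little care is the interchange of limit and integral used to identify the limit function's derivative — but this is routine once one has both the uniform convergence of $\{f_j\}$ and the $L^2$-convergence of $\{f_j'\}$ in hand. An alternative, essentially equivalent, route would be to observe that the map $f\mapsto(f(-1),f(1),f')$ embeds $W$ isometrically (up to the harmless weighting) into $\mathbb{C}^2\oplus L^2(-1,1)$ and to check that the image is closed; the closedness check is exactly the identification argument just described.
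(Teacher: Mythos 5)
Your proof is correct and follows essentially the same route as the paper: both extract $L^{2}$-convergence of the derivatives and convergence of the boundary values from $\phi$-Cauchyness, build the limit function via the fundamental theorem of calculus, and verify membership in $W$ and convergence in $\|\cdot\|_{\phi}$. Your preliminary sup-norm bound $\|f\|_{\infty}\le C\|f\|_{\phi}$ and the positive-definiteness check are harmless additions; the paper instead defines the limit directly as $f(x)=A_{-1}+\int_{-1}^{x}g(t)\,dt$, which is the same identification argument in a slightly different order.
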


\begin{proof}
Let $\left\{  f_{n}\right\}  \subset W$ be a Cauchy sequence. Hence%
\begin{align*}
\left\Vert f_{n}-f_{m}\right\Vert _{\phi}^{2}  &  =\frac{1}{2}\left\vert
f_{n}(-1)-f_{m}(-1)\right\vert ^{2}+\frac{1}{2}\left\vert f_{n}(1)-f_{m}%
(1)\right\vert ^{2}+\int_{-1}^{1}\left\vert f_{n}^{\prime}(x)-f_{m}^{\prime
}(x)\right\vert ^{2}dx\\
&  \rightarrow0\text{ \ \ \ \ as }n,m\rightarrow\infty.
\end{align*}
In particular, since%
\[
\int_{-1}^{1}\left\vert f_{n}^{\prime}(x)-f_{m}^{\prime}(x)\right\vert
^{2}dx\leq\left\Vert f_{n}-f_{m}\right\Vert _{\phi}^{2},
\]
we see that $\left\{  f_{n}^{\prime}\right\}  $ is Cauchy in $L^{2}(-1,1)$.
Since $L^{2}(-1,1)$ is complete, there exists $g\in L^{2}(-1,1)$ such that%
\begin{equation}
f_{n}^{\prime}\rightarrow g\text{ \ \ \ \ as }n\rightarrow\infty\text{ \ in
}L^{2}(-1,1). \label{(1) Mendelssohn}%
\end{equation}
Also, since%
\[
\frac{1}{2}\left\vert f_{n}(-1)-f_{m}(-1)\right\vert ^{2}\leq\left\Vert
f_{n}-f_{m}\right\Vert _{\phi}^{2}\text{\quad and\quad}\frac{1}{2}\left\vert
f_{n}(1)-f_{m}(1)\right\vert ^{2}\leq\left\Vert f_{n}-f_{m}\right\Vert _{\phi
}^{2},
\]
we see that the sequences $\left\{  f_{n}(\pm1)\right\}  $ are both Cauchy in
$%
\mathbb{C}
$ and, hence, there exists $A_{\pm1}\in%
\mathbb{C}
$ such that%
\begin{align}
f_{n}(1)  &  \rightarrow A_{1}\text{ in }\mathbb{C}\label{(2) Mendelssohn}\\
f_{n}(-1)  &  \rightarrow A_{-1}\text{ in }\mathbb{C}. \label{(3) Mendelssohn}%
\end{align}
Furthermore, since $f_{n}\in AC\left[  -1,1\right]  $ $(n\in%
\mathbb{N}
)$, we see that%
\[
\int_{-1}^{1}g(t)dt\longleftarrow\int_{-1}^{1}f_{n}^{\prime}(t)dt=f_{n}%
(1)-f_{n}(-1)\rightarrow A_{1}-A_{-1};
\]
that is,
\begin{equation}
A_{1}=A_{-1}+\int_{-1}^{1}g(t)dt. \label{(4) Mendelssohn}%
\end{equation}
Define $f:\left[  -1,1\right]  \rightarrow%
\mathbb{C}
$ by%
\[
f(x)=A_{-1}+\int_{-1}^{x}g(t)dt.
\]
It is clear that $f\in AC\left[  -1,1\right]  $ and $f^{\prime}(x)=g(x)$ $\in
L^{2}(-1,1)$ for \ a.e. $x\in\left[  -1,1\right]  $, so $f\in W.$ Furthermore,
$f(-1)=A_{-1}$ and $f(1)=A_{-1}+\int_{-1}^{1}g(t)dt=A_{1}$ by
(\ref{(4) Mendelssohn}). Now%
\begin{align*}
\left\Vert f_{n}-f\right\Vert _{\phi}^{2}  &  =\frac{1}{2}\left\vert
f_{n}(-1)-f(-1)\right\vert ^{2}+\frac{1}{2}\left\vert f_{n}(1)-f(1)\right\vert
^{2}+\int_{-1}^{1}\left\vert f_{n}^{\prime}(t)-f^{\prime}(t)\right\vert
^{2}dt\\
&  =\frac{1}{2}\left\vert f_{n}(-1)-A_{-1}\right\vert ^{2}+\frac{1}%
{2}\left\vert f_{n}(1)-A_{1}\right\vert ^{2}+\int_{-1}^{1}\left\vert
f_{n}^{\prime}(t)-g(t)\right\vert ^{2}dt\\
&  \rightarrow0\text{ as }n\rightarrow\infty.
\end{align*}
Thus, $\left(  W,\phi(\cdot,\cdot)\right)  $ is complete.
\end{proof}

With $W$ and $\phi(\cdot,\cdot)$ as given above, define%
\begin{align*}
W_{1}  &  :=\left\{  f\in W\mid f(\pm1)=0\right\} \\
W_{2}  &  :=\left\{  f\in W\mid f^{\prime}(x)=c\text{ for some constant
}c=c(f)\right\}  .
\end{align*}

\begin{remark}
\label{Jacobi and W_1,2}It is clear, from the definition, that $W_{2}$ is
two-dimensional and, in fact, $W_{2}=\mathrm{span}\{\widetilde{P}%
_{0}^{(-1,-1)},$ $\widetilde{P}_{1}^{(-1,-1)}\}$.
\end{remark}

\begin{theorem}
\label{Fundamental Decomposition}The spaces $W_{1}$ and $W_{2}$ are closed,
orthogonal subspaces of $\left(  W,\phi\left(  \cdot,\cdot\right)  \right)  $
and%
\[
W=W_{1}\oplus W_{2}.
\]

\end{theorem}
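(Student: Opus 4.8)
The plan is to verify the three assertions---$W_1$ and $W_2$ are closed, they are orthogonal, and their (internal) sum is all of $W$---in that order, exploiting the very simple structure of $W_2$ noted in Remark \ref{Jacobi and W_1,2}. First I would dispose of the orthogonality. Take $f\in W_1$ and $g\in W_2$, so $f(\pm1)=0$ and $g'\equiv c$ for some constant $c$. Then in the defining formula (\ref{phi inner product}) the two boundary terms $\tfrac12 f(-1)\overline{g}(-1)$ and $\tfrac12 f(1)\overline{g}(1)$ vanish, and
\[
\int_{-1}^{1} f'(x)\overline{g}'(x)\,dx = \overline{c}\int_{-1}^{1} f'(x)\,dx = \overline{c}\,\bigl(f(1)-f(-1)\bigr)=0,
\]
using $f\in AC[-1,1]$ and $f(\pm1)=0$. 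Hence $\phi(f,g)=0$, so $W_1\perp W_2$.

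Next I would establish the decomposition $W=W_1\oplus W_2$ by exhibiting the projection explicitly. Given $f\in W$, set $c=c(f):=\tfrac12\bigl(f(1)-f(-1)\bigr)$ and define $g(x):=\tfrac12\bigl(f(1)+f(-1)\bigr)+c x$; this is an affine function with $g'\equiv c$, so $g\in W_2$, and a direct check gives $g(1)=f(1)$, $g(-1)=f(-1)$. Then $h:=f-g$ lies in $W$ (difference of elements of $W$) and satisfies $h(\pm1)=f(\pm1)-g(\pm1)=0$, so $h\in W_1$. Thus $f=h+g$ with $h\in W_1$, $g\in W_2$; since we have already shown $W_1\cap W_2=\{0\}$ (any element of the intersection is a multiple of $x$ plus a constant vanishing at $\pm1$, hence $0$), this expresses $W$ as the internal direct sum. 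Together with the orthogonality just proved, this is in fact an orthogonal direct sum.

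Finally, closedness. $W_2$ is finite-dimensional (two-dimensional by Remark \ref{Jacobi and W_1,2}), hence automatically closed in the Hilbert space $\bigl(W,\phi(\cdot,\cdot)\bigr)$. For $W_1$, the cleanest route is to observe that the evaluation functionals $\Lambda_{\pm1}\colon f\mapsto f(\pm1)$ are bounded on $W$: indeed from (\ref{phi inner product}), $|f(\pm1)|^2 \le 2\,\phi(f,f)=2\|f\|_\phi^2$, so $\Lambda_{\pm1}$ are continuous, and $W_1=\ker\Lambda_{-1}\cap\ker\Lambda_{+1}$ is an intersection of closed subspaces, hence closed. Alternatively, closedness of $W_1$ follows a posteriori: since $W_2$ is closed and $W=W_1\oplus W_2$ is an orthogonal decomposition, $W_1=W_2^{\perp}$ is closed. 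I expect no serious obstacle here; the only mild subtlety is making sure the pointwise-evaluation bound $|f(\pm1)|^2\le 2\|f\|_\phi^2$ is read off correctly from the definition of $\phi$, which is immediate.
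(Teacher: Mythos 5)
Your argument is correct, and it differs in a worthwhile way from the paper's. The orthogonality computation ($f(\pm1)=0$ kills the boundary terms and $\overline{c}\,(f(1)-f(-1))=0$ kills the integral) is the same in both. Where you diverge is in how the rest is obtained: the paper proves the set identity $W_{1}=W_{2}^{\perp}$ — the reverse inclusion by testing $\phi(f,Ax+B)=0$ against suitable choices of $A$ and $B$ to force $f(\pm1)=0$ — and then the closedness of $W_{1}$ and the decomposition $W=W_{2}\oplus W_{2}^{\perp}$ come for free from the projection theorem applied to the closed (two-dimensional) subspace $W_{2}$. You instead exhibit the decomposition constructively, writing $f=h+g$ with $g(x)=\frac{f(1)+f(-1)}{2}+\frac{f(1)-f(-1)}{2}x\in W_{2}$ and $h=f-g\in W_{1}$ (this is precisely the formula the paper records immediately after the theorem as (\ref{Representation1})), check $W_{1}\cap W_{2}=\{0\}$, and prove closedness of $W_{1}$ directly from the bound $\left\vert f(\pm1)\right\vert ^{2}\leq2\left\Vert f\right\Vert _{\phi}^{2}$, i.e.\ continuity of the evaluation functionals, so that $W_{1}$ is an intersection of kernels of bounded functionals. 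Your route is more elementary — it does not invoke the projection theorem — and the explicit continuity of point evaluation is a useful observation in its own right; the paper's route is marginally shorter and delivers the identity $W_{1}=W_{2}^{\perp}$ explicitly, which you recover only a posteriori. Both proofs are complete and correct.
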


\begin{proof}
Since $W_{2}$ is two-dimensional, it is a closed subspace of $W$. By
definition, the orthogonal complement of $W_{2}$ is given by%
\[
W_{2}^{\perp}=\left\{  f\in W\mid\phi(f,g)=0\text{ \ for all }g\in
W_{2}\right\}  .
\]
To see that $W_{1}\subset W_{2}^{\perp}$, let $f\in W_{1}$, $g\in W_{2}$ and
consider%
\[
\phi(f,g)=\frac{1}{2}f(-1)\overline{g}(-1)+\frac{1}{2}f(1)\overline{g}%
(1)+\int_{-1}^{1}f^{\prime}(x)\overline{g}^{\prime}(x)dx.
\]
The first two terms on the right hand side vanish since $f\in W_{1}$;
furthermore, $\overline{g}^{\prime}(x)=c$ for some constant $c\in\mathbb{C}$
since $g\in W_{2}$. Moreover,%
\begin{align*}
\phi\left(  f,g\right)   &  =\int_{-1}^{1}f^{\prime}(x)\overline{g}^{\prime
}(x)dx=c\int_{-1}^{1}f^{\prime}(x)dx\\
&  =c\left(  f(1)-f(-1)\right)  =0,
\end{align*}
so $f\in W_{2}^{\perp}.$ Conversely, let $f\in W_{2}^{\bot}.$ Then, for any
choice of constants $A,B\in\mathbb{C},$ it is the case that%
\begin{align*}
0 &  =\phi\left(  f(x),Ax+B\right)  \\
&  =\dfrac{1}{2}f(-1)(-\overline{A}+\overline{B})+\dfrac{1}{2}f(1)(\overline
{A}+\overline{B})+\overline{A}\int_{-1}^{1}f^{\prime}(x)dx\\
&  =-\frac{3}{2}\overline{A}f(-1)+\frac{3}{2}\overline{A}f(1)+\frac
{\overline{B}}{2}(f(-1)+f(1)).
\end{align*}
By choosing $A=0,$ $B\neq0$ and then $A\neq0$ and $B=0,$ we find that
$f(\pm1)=0$ so $f\in W_{1}.$
\end{proof}

We note that, given $f\in W,$ we can (uniquely) write%
\[
f=f_{1}+f_{2}\quad(f_{i}\in W_{i}\text{ }(i=1,2)),
\]
where
\begin{equation}
f_{1}(x):=f(x)-f_{2}(x)\text{ and }f_{2}(x):=\frac{f(1)-f(-1)}{2}%
x+\frac{f(1)+f(-1)}{2}\quad(x\in\lbrack-1,1]). \label{Representation1}%
\end{equation}

We now turn our attention to the construction of the self-adjoint operator $T$
in $(W,\phi(\cdot,\cdot)),$ generated by the Jacobi differential expression
$\ell\lbrack\cdot]$ given in (\ref{Jacobi DE}), that has the \textit{entire}
sequence of Jacobi polynomials $\left\{  \widetilde{P}_{n}^{(-1,-1)}%
(x)\right\}  _{n=0}^{\infty}$ as eigenfunctions. The main idea is to use the
decomposition in Theorem \ref{Fundamental Decomposition} to construct
self-adjoint operators $T_{1}$ in $W_{1}$ and $T_{2}$ in $W_{2},$ both
generated by $\ell\lbrack\cdot],$ that have, respectively, the Jacobi
polynomials $\left\{  \widetilde{P}_{n}^{(-1,-1)}(x)\right\}  _{n=2}^{\infty}$
and $\left\{  \widetilde{P}_{0}^{(-1,-1)}(x),\widetilde{P}_{1}^{(-1,-1)}%
(x)\right\}  $ as eigenfunctions. The operator $T$ is then specifically
defined to be the direct sum of $T_{1}$ and $T_{2}.$ The construction of
$T_{2}$ is straightforward, but constructing $T_{1}$ needs special attention.
Indeed, it requires the first left-definite operator $B_{1},$ defined in
Theorem \ref{Left-Definite Jacobi Theorem}, associated with the pair
$(A,L^{2}((-1,1);(1-x^{2})^{-1}),$ where $A$ is the self-adjoint operator
defined in (\ref{Jacobi operator}). The construction of $T_{1}$ begins with
the following remarkable, and surprising, identification of the function
spaces $W_{1}$ and $V_{1}.$

\begin{theorem}
\label{W1,1=V1}$W_{1}=V_{1},$ where $V_{1}$ is defined as in $($%
\ref{left-definite space definition -1,-1}$)$.
\end{theorem}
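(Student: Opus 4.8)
The plan is to prove the two set inclusions $W_1 \subseteq V_1$ and $V_1 \subseteq W_1$ separately, using the explicit descriptions of both spaces. Recall that
\[
V_1 = \{f:(-1,1)\to\mathbb{C} \mid f\in AC_{\mathrm{loc}}(-1,1);\ f\in L^2((-1,1);(1-x^2)^{-1});\ f'\in L^2(-1,1)\},
\]
while $W_1 = \{f\in W \mid f(\pm1)=0\}$ where $W$ consists of functions in $AC[-1,1]$ with $f'\in L^2(-1,1)$. Both spaces therefore require $f'\in L^2(-1,1)$ and differ only in the conditions involving the behavior near the endpoints: $W_1$ asks for $f\in AC[-1,1]$ with vanishing boundary values, whereas $V_1$ asks for local absolute continuity together with the weighted integrability $(1-x^2)^{-1/2}f\in L^2(-1,1)$.

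First I would show $W_1\subseteq V_1$. Let $f\in W_1$. Then $f\in AC[-1,1]\subseteq AC_{\mathrm{loc}}(-1,1)$ and $f'\in L^2(-1,1)$, so it only remains to verify $f\in L^2((-1,1);(1-x^2)^{-1})$. Since $f(\pm1)=0$ and $f\in AC[-1,1]$, I can write $f(x)=\int_{-1}^x f'(t)\,dt$ near $-1$ and $f(x)=-\int_x^1 f'(t)\,dt$ near $+1$; this is precisely the setup of the Chisholm--Everitt inequality (Theorem \ref{CHEL}), applied with weight $w\equiv 1$ on, say, $(0,1)$ with $\varphi(x)=(1-x^2)^{-1/2}$ near $x=1$. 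Near $x=1$, $|f(x)| \le \int_x^1 |f'(t)|\,dt$, and one checks that the relevant product $\big(\int_x^1 |f'|^2\big)^{1/2}$ type bound together with the boundedness of $K(x)$ (the same elementary computation $\tfrac12(1-x)\ln\tfrac{1+x}{1-x}$ bounded on $[0,1)$, as in the proof of Lemma \ref{Dirichlet -1,-1}) gives $(1-x^2)^{-1/2}f \in L^2$ near each endpoint. The symmetric argument at $x=-1$ completes this inclusion.

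Next I would show $V_1\subseteq W_1$. Let $f\in V_1$. The key point is that $f'\in L^2(-1,1)$ forces $f\in AC[-1,1]$: since $f$ is locally absolutely continuous and $f'\in L^1(-1,1)$ (because $L^2(-1,1)\subseteq L^1(-1,1)$ on a finite interval), the limits $f(\pm1)=\lim_{x\to\pm1}f(x)$ exist and are finite, so $f$ extends to a function in $AC[-1,1]$ and hence $f\in W$. It remains to prove $f(\pm1)=0$. This is exactly the argument in Lemma \ref{f(plusminus 1)=0}: if, say, $f(1)\ne 0$, then $|f(x)|\ge |f(1)|/2$ on some interval $[x^*,1)$, whence
\[
\int_{x^*}^1 |f(t)|^2(1-t^2)^{-1}\,dt \ge \frac{|f(1)|^2}{4}\int_{x^*}^1 (1-t^2)^{-1}\,dt = \infty,
\]
contradicting $f\in L^2((-1,1);(1-x^2)^{-1})$; similarly $f(-1)=0$. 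Therefore $f\in W_1$.

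I expect the main obstacle to be the direction $W_1\subseteq V_1$, specifically the careful bookkeeping in applying Theorem \ref{CHEL} to deduce the weighted square-integrability of $f$ from that of $f'$ — one must split the interval at an interior point, handle each endpoint with the correct choice of $\varphi$ and $\psi$, and verify the finiteness of the constant $K$ in each case. The reverse inclusion is essentially a repackaging of Lemmas \ref{Dirichlet -1,-1} and \ref{f(plusminus 1)=0} and should be routine. (It is worth noting that the analogous identification was presumably carried out in \cite{BruderLittlejohn} in the case $\alpha>-1,\ \beta=-1$; where the argument is parallel I would simply cite that paper.)
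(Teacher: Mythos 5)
Your proposal is correct and takes essentially the same route as the paper: for $V_1\subseteq W_1$ you extend $f$ to $AC[-1,1]$ (using $f'\in L^2(-1,1)\subset L^1(-1,1)$) and rule out nonzero boundary values by the divergence of $\int(1-x^2)^{-1}\,dx$, exactly as in the paper, and for $W_1\subseteq V_1$ you invoke Theorem \ref{CHEL} with the same choice of functions $\varphi,\psi\in\{1,(1-x^2)^{-1/2}\}$ and the boundedness of the resulting logarithmic $K(x)$. The only difference is cosmetic (you set up the inequality near $x=1$ where the paper works on $(-1,0]$), so no changes are needed.
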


\begin{proof}
For the sake of completeness, we note that%
\[
V_{1}=\{f:(-1,1)\rightarrow\mathbb{C}\mid f\in AC_{\mathrm{loc}}%
(-1,1);(1-x^{2})^{-1/2}f,f^{\prime}\in L^{2}(-1,1)\}.
\]
and observe that the condition $(1-x^{2})^{-1/2}f\in L^{2}(-1,1)$ is
equivalent to $f\in L_{-1,-1}^{2}(-1,1).\medskip$

\noindent\textbf{(1)} We first show that $V_{1}\subseteq W_{1}$. Let $f\in
V_{1}.$ In particular, $f\in AC[-1,1].$ For $0\leq x<1$,
\[
\int_{0}^{x}f^{\prime}(t)dt=f(x)-f(0);
\]
consequently, since $f^{\prime}\in L^{2}(-1,1)\subset L^{1}(-1,1),$ we see
that $\lim\limits_{x\rightarrow1^{-}}f(x)$ exists and is finite. Similarly,
$\lim\limits_{x\rightarrow-1^{+}}f(x)$ exists and is finite. Define%
\[
f(\pm1):=\lim\limits_{x\rightarrow\pm1^{\mp}}f(x),
\]
so $f\in AC\left[  -1,1\right]  .$ It suffices to show that $f(\pm1)=0.$
Suppose that $f(1)\neq0.$ Hence, for some $c>0,$ there exists $0<\delta<1$
such that%
\[
\left\vert f(x)\right\vert \geq c>0
\]
for all $x\in\left[  \delta,1\right]  .$ Since $f\in L_{-1,-1}^{2}(-1,1)$, we
see that%
\begin{align*}
\infty &  >\int_{0}^{1}\left\vert f(x)\right\vert ^{2}(1-x^{2})^{-1}dx\\
&  \geq\int_{0}^{\delta}\left\vert f(x)\right\vert ^{2}(1-x^{2})^{-1}dx\geq
c^{2}\int_{0}^{\delta}(1-x^{2})^{-1}dx=\infty,
\end{align*}
a contradiction. Hence, $f(1)=0$; similarly, $f(-1)=0$, so $f\in
W_{1}.\medskip$

\noindent\textbf{(2)} Let $f\in W_{1}.$ It suffices to show that $f\in
L^{2}\left(  (-1,1);(1-x^{2})^{-1}\right)  $. For $-1<x<0$,
\[
(1-x^{2})^{-1/2}\int_{-1}^{x}f^{\prime}(t)dt=(1-x^{2})^{-1/2}f(x)
\]
since $f(-1)=0$. We use Theorem \ref{CHEL} on $(-1,0]$ with%
\[
\psi(x)=(1-x^{2})^{-1/2},\text{ }\varphi(x)=1.
\]
Clearly, $\psi$ is square integrable near $0$ and $\varphi$ is square
integrable near $-1.$ Moreover,%
\[
\int_{-1}^{x}dt\int_{x}^{0}\frac{dt}{1-t^{2}}\leq\int_{-1}^{x}dt\int_{x}%
^{0}\frac{dt}{1+t}=-(x+1)\ln(1+x),
\]
which is a bounded function on $(-1,0].$ By Theorem \ref{CHEL}, it follows
that
\[
f\in L^{2}\left(  (-1,0];(1-x^{2})^{-1}\right)  ;
\]
a similar argument shows $f\in L^{2}\left(  [0,1);(1-x^{2})^{-1}\right)  $.
Hence $W_{1}\subset V_{1}$.
\end{proof}

\begin{theorem}
\label{Equivalent Inner Products}The inner products $\phi(\cdot,\cdot)$ and
$(\cdot,\cdot)_{1}$, where $(\cdot,\cdot)_{1}$ is defined in $($%
\ref{LD IP 1}$),$ are equivalent on $W_{1}=V_{1}.$
\end{theorem}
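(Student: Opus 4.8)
The plan is to show the two norms $\|\cdot\|_\phi$ and $\|\cdot\|_1$ are comparable on $W_1 = V_1$, i.e.\ that there exist constants $0 < c_1 \le c_2 < \infty$ with $c_1 \|f\|_1^2 \le \|f\|_\phi^2 \le c_2 \|f\|_1^2$ for all $f \in W_1$; since both expressions are quadratic forms coming from inner products, equivalence of norms is the same as equivalence of inner products, and by polarization it suffices to handle the norms. For $f \in W_1 = V_1$ we know $f(\pm 1) = 0$, so the boundary terms in $\phi$ vanish and
\[
\|f\|_\phi^2 = \int_{-1}^1 |f'(x)|^2\,dx,
\qquad
\|f\|_1^2 = \int_{-1}^1 |f'(x)|^2\,dx + k\int_{-1}^1 \frac{|f(x)|^2}{1-x^2}\,dx.
\]
Thus one inequality is immediate: $\|f\|_\phi^2 \le \|f\|_1^2$ (recall $k \ge 0$), so $c_2 = 1$ works. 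All the content is in the reverse inequality: I must bound $\int_{-1}^1 |f(x)|^2 (1-x^2)^{-1}\,dx$ by a constant times $\int_{-1}^1 |f'(x)|^2\,dx$, i.e.\ a Hardy-type inequality on $(-1,1)$ with the vanishing boundary conditions $f(\pm1)=0$.

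The natural tool is Theorem~\ref{CHEL} (the Chisholm--Everitt--Talenti--Tomaselli--Muckenhoupt inequality), applied on each half-interval. Concretely, on $[0,1)$, writing $f(x) = \int_x^1 (-f'(t))\,dt$ (legitimate since $f(1) = 0$ and $f' \in L^1$), I set $w \equiv 1$, $\varphi(x) = (1-x^2)^{-1/2}$ and $\psi(x) = 1$, so that $(1-x^2)^{-1/2} f(x) = \varphi(x)\int_x^1 \psi(t)(-f'(t))\,dt = (A(-f'))(x)$ in the notation of Theorem~\ref{CHEL}. The hypotheses (i)--(iii) hold: $\varphi$ is locally $L^2$ and integrable near $0$, $\psi$ is trivially fine, and the positivity conditions are clear. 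The key quantity is
\[
K(x) = \left(\int_0^x \frac{dt}{1-t^2}\right)^{1/2}\left(\int_x^1 dt\right)^{1/2}
= \left(\tfrac12\ln\tfrac{1+x}{1-x}\right)^{1/2}(1-x)^{1/2},
\]
which tends to $0$ as $x \to 1^-$ (the logarithmic singularity is killed by $(1-x)^{1/2}$) and as $x\to 0^+$, hence is bounded on $[0,1)$; so $K < \infty$ and Theorem~\ref{CHEL} gives $\|(1-x^2)^{-1/2}f\|_{L^2[0,1)} \le 2K \|f'\|_{L^2[0,1)}$. An entirely symmetric argument on $(-1,0]$, using $f(-1)=0$ and $\varphi(x)=(1-x^2)^{-1/2}$, $\psi(x)=1$ with the roles of the endpoints reversed (this is essentially the computation $\int_{-1}^x dt \int_x^0 (1-t^2)^{-1}\,dt \le -(x+1)\ln(1+x)$ already recorded in the proof of Theorem~\ref{W1,1=V1}), yields the corresponding bound on $(-1,0]$. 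Adding the two gives $\int_{-1}^1 |f(x)|^2(1-x^2)^{-1}\,dx \le (2K)^2 \|f'\|_{L^2(-1,1)}^2 = 4K^2 \|f\|_\phi^2$ for a finite constant $K$ (taking the larger of the two interval constants), and therefore
\[
\|f\|_1^2 = \|f\|_\phi^2 + k\int_{-1}^1 \frac{|f(x)|^2}{1-x^2}\,dx \le (1 + 4kK^2)\,\|f\|_\phi^2,
\]
so $c_1 = (1 + 4kK^2)^{-1}$ works. Combined with $\|f\|_\phi \le \|f\|_1$, this establishes the equivalence of the norms, hence of the inner products, on $W_1 = V_1$.

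The main obstacle — really the only nontrivial point — is verifying that the Muckenhoupt-type constant $K$ is finite near the endpoints $\pm 1$, where the weight $(1-x^2)^{-1}$ is singular; this is exactly the borderline case where the logarithmic blow-up of $\int_0^x (1-t^2)^{-1}\,dt$ is precisely compensated by the factor $(1-x)^{1/2}$ coming from $\int_x^1 dt$, and one should check this limit carefully. Everything else (the reduction to two half-intervals, the vanishing of boundary terms, the trivial direction of the inequality) is routine, and indeed the requisite boundedness computations are already present in the excerpt, in the proofs of Lemma~\ref{Dirichlet -1,-1} and Theorem~\ref{W1,1=V1}, so I would simply invoke or lightly adapt them.
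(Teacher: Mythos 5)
Your proof is correct, but it takes a genuinely different route from the paper. The paper's own proof is soft: it observes only the trivial inequality $\left\Vert f\right\Vert _{\phi}^{2}\leq\left\Vert f\right\Vert _{1}^{2}$ and then invokes the Open Mapping (bounded inverse) Theorem, using the facts that $(W_{1},\phi(\cdot,\cdot))$ is complete (as a closed subspace of $W$, by Theorem \ref{Fundamental Decomposition}) and that $(V_{1},(\cdot,\cdot)_{1})=H_{1}$ is complete (Theorem \ref{(2) Hermite}), together with the set equality $W_{1}=V_{1}$ of Theorem \ref{W1,1=V1}; the identity map is then a bounded bijection between Hilbert spaces, so its inverse is bounded. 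You instead prove the nontrivial inequality directly: a Hardy-type bound $\int_{-1}^{1}|f|^{2}(1-x^{2})^{-1}dx\leq4K^{2}\int_{-1}^{1}|f^{\prime}|^{2}dx$ for $f$ with $f(\pm1)=0$, obtained from Theorem \ref{CHEL} on each half-interval with $\varphi$ or $\psi$ equal to $(1-x^{2})^{-1/2}$ and the bounded quantity $\tfrac12(1-x)\ln\tfrac{1+x}{1-x}$ (resp. $-(x+1)\ln(1+x)$), exactly the computations already appearing in Lemma \ref{Dirichlet -1,-1} and Theorem \ref{W1,1=V1}; there, however, CHEL is used only qualitatively (to get membership in $L^{2}((-1,1);(1-x^{2})^{-1})$), whereas you exploit the quantitative bound $2K$. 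What your approach buys is an explicit constant $1+4kK^{2}$ and independence from the completeness of $H_{1}$ and from the Open Mapping Theorem; what the paper's approach buys is brevity, since all analytic work has already been absorbed into the completeness statements and Theorem \ref{W1,1=V1}. Your details check out: $f(\pm1)=0$ kills the boundary terms in $\phi$, the representation $f(x)=-\int_{x}^{1}f^{\prime}(t)\,dt$ is legitimate since $f\in AC[-1,1]$, and $K(x)^{2}=\tfrac12(1-x)\ln\tfrac{1+x}{1-x}\rightarrow0$ as $x\rightarrow1^{-}$, so $K<\infty$ as required.
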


\begin{proof}
First of all, we note that both $\left(  W_{1},\phi(\cdot,\cdot)\right)  $ and
$\left(  V_{1},(\cdot,\cdot)_{1}\right)  $ are Hilbert spaces. Let $f\in
W_{1}=V_{1}.$ Since%
\[
\left\Vert f\right\Vert _{\phi}^{2}=\int_{-1}^{1}\left\vert f^{\prime
}(x)\right\vert ^{2}dx\leq\int_{-1}^{1}\left[  \left\vert f^{\prime
}(x)\right\vert ^{2}+k\left\vert f(x)\right\vert ^{2}\left(  1-x^{2}\right)
^{-1}\right]  dx=\left\Vert f\right\Vert _{1}^{2},
\]
we see, by the Open Mapping Theorem (see \cite[Theorem 4.12-2 and Problem 9,
p. 291]{Kreyszig}), that these inner products are equivalent.
\end{proof}

\begin{remark}
\label{Jacobi polynomials and W_1}Since, by Theorem \ref{(2) Hermite}, the
Jacobi polynomials $\left\{  \widetilde{P}_{n}^{(-1,-1)}\right\}
_{n=2}^{\infty}$ form a complete orthogonal set in the first left-definite
space $H_{1}=(V_{1},(\cdot,\cdot)_{1}),$ it follows from Theorem
\ref{Equivalent Inner Products} that they are also a complete orthogonal set
in $(W_{1},\phi(\cdot,\cdot)).$ Together with Remark \ref{Jacobi and W_1,2},
we see that the full sequence of Jacobi polynomials $\left\{  \widetilde{P}%
_{n}^{(-1,-1)}\right\}  _{n=0}^{\infty}$ form a complete orthogonal set in
$W=W_{1}\oplus W_{2}.$
\end{remark}

We now construct a self-adjoint operator $T_{1}$ in the space $W_{1}$,
generated by the Jacobi expression $\ell\lbrack\cdot],$ defined in
(\ref{Jacobi DE}), having the sequence of Jacobi polynomials $\{P_{n}%
^{(-1,-1)}\}_{n=2}^{\infty}$ as eigenfunctions. Recall that the first
left-definite operator
\[
B_{1}:\mathcal{D}\left(  B_{1}\right)  :=V_{3}\subset H_{1}\rightarrow H_{1},
\]
associated with $\left(  A,L_{-1,-1}^{2}(-1,1)\right)  $, is self-adjoint in
the first left-definite space $H_{1}$ (see (\ref{H_n})) and given specifically
by%
\[
B_{1}[f](x):=\ell\lbrack f](x)=-(1-x^{2})f^{\prime\prime}(x)+kf(x),
\]
where $f\in\mathcal{D}\left(  B_{1}\right)  :=V_{3}$
\begin{align*}
&  =\left\{  f:(-1,1)\rightarrow%
\mathbb{C}
\mid f,f^{\prime},f^{\prime\prime}\in AC_{\mathrm{loc}}(-1,1);\right.
\quad\quad\quad\\
&  \qquad\left.  (1-x^{2})f^{\prime\prime\prime},(1-x^{2})^{1/2}%
f^{\prime\prime},f^{\prime},(1-x^{2})^{-1/2}f\in L^{2}(-1,1)\right\}  .
\end{align*}
More specifically, $B_{1}$ is self-adjoint with respect to the first
left-definite inner product $(\cdot,\cdot)_{1}.$ We now set out to prove that
the operator $T_{1}:\mathcal{D}(T_{1})\subset W_{1}\rightarrow W_{1}$ given by%
\begin{align*}
T_{1}f &  =B_{1}f=\ell\lbrack f]\\
f &  \in\mathcal{D}(T_{1}):=V_{3}%
\end{align*}
is self-adjoint in $\left(  W_{1},\phi(\cdot,\cdot)\right)  .$

\begin{theorem}
\label{(4) Hermite}Let $f,g\in V_{3}.$ Then%
\[
\lim_{x\rightarrow\pm1^{\mp}}(1-x^{2})f^{\prime\prime}(x)\overline{g}^{\prime
}(x)=0.
\]

\end{theorem}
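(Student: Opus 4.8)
The plan is to show that for $f,g\in V_3$, the boundary term $(1-x^2)f''(x)\overline{g}'(x)$ has a finite limit as $x\to\pm1$, and then rule out a nonzero value by an integrability-versus-singularity argument exactly parallel to the proofs of Lemmas~\ref{f(plusminus 1)=0} and the strong limit-point lemma in Section~\ref{Right Definite Analysis}. I will work only at $x=1$; the case $x=-1$ is entirely analogous. The starting observation is that, by the definition of $V_3$, we have $(1-x^2)f'''\in L^2(-1,1)$, $(1-x^2)^{1/2}f''\in L^2(-1,1)$, $f'\in L^2(-1,1)$, and $g\in V_3$ satisfies the same four conditions; in particular $g'\in AC_{\mathrm{loc}}(-1,1)$ and $(1-x^2)^{1/2}g''\in L^2(-1,1)$.

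First I would establish that $\lim_{x\to 1^-}(1-x^2)f''(x)\overline{g}'(x)$ exists and is finite. Write $h(x):=(1-x^2)f''(x)$, so that $h'(x)=(1-x^2)f'''(x)-2xf''(x)$. By Hölder's inequality, $h'$ is a sum of an $L^2$ function near $1$ (namely $(1-x^2)f'''$) and a function $-2xf''$ which, since $(1-x^2)^{1/2}f''\in L^2$, is $(1-x^2)^{-1/2}$ times an $L^2$ function — hence $h'\in L^1$ near $1$ by the Cauchy--Schwarz estimate $\int |f''|\,dx=\int (1-x^2)^{-1/2}\cdot(1-x^2)^{1/2}|f''|\,dx$ together with $\int_0^1 (1-x^2)^{-1}\,dx=\infty$ — wait, that diverges, so I must be more careful: instead I use that $(1-x^2)^{1/2}f''\in L^2$ already gives $f''\in L^1_{\mathrm{loc}}$ but not near $1$. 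The cleaner route is to note that by Green's/Dirichlet's formula and Theorem~\ref{SLP and D}, functions in $\Delta$, and a fortiori in $V_3\subset\Delta$, satisfy $f'\in L^2[-1,1]$ (Lemma~\ref{Dirichlet -1,-1}) and $f\in AC[-1,1]$ with $f(\pm1)=0$. Then integration by parts gives $(1-x^2)f''(x)\overline g'(x)=\int_0^x \big((1-t^2)f''(t)\big)'\overline g'(t)\,dt+\int_0^x (1-t^2)f''(t)\overline g''(t)\,dt+C$; the second integrand is $(1-t^2)^{1/2}f''\cdot(1-t^2)^{1/2}g''\in L^1$ by Cauchy--Schwarz, and for the first I pair $(1-t^2)f'''\in L^2$ with $g'\in L^2$ and handle the $-2tf''$ term by pairing $(1-t^2)^{1/2}f''\in L^2$ with $(1-t^2)^{-1/2}g'$; here I would invoke Theorem~\ref{(4) Hermite}'s own statement's cousin — actually I would instead simply apply the already-available fact that $\ell[f]\overline{g}$ is integrable and use Dirichlet's formula (\ref{Dirichlet's formula}) directly, which rearranges to show the boundary term $f'(x)\overline g(x)$ vanishes (that is Theorem~\ref{SLP and D}) and similarly controls $(1-x^2)f''\overline g'$.

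Once the limit is known to exist, call it $c$, I would suppose $c\neq0$ and derive a contradiction. Near $x=1$ we may assume $\mathrm{Re}\big((1-x^2)f''(x)\overline g'(x)\big)$ is bounded away from $0$, say $(1-x^2)|f''(x)||g'(x)|\geq c_0>0$. This forces $|f''(x)|\geq c_0/\big((1-x^2)|g'(x)|\big)$ on $[x^*,1)$. Since $g'\in L^2[-1,1]$, $|g'|$ cannot be too large on a set of positive measure, and one gets $\int_{x^*}^1 (1-x^2)^{-1}|g'(x)|^{-1}\,dx$ forced to be comparable to $\int |f''|$; combining with $(1-x^2)^{1/2}f''\in L^2$ via Cauchy--Schwarz leads to $\int_{x^*}^1 (1-x^2)^{-1}\,dx<\infty$, which is false. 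Alternatively, and more in the style of this paper, I would integrate: from $|f''(t)|\,|g'(t)|\geq c_0(1-t^2)^{-1}$ and the $L^2$ bounds I obtain a divergent lower bound for an integral that the membership conditions in $V_3$ force to be finite, the contradiction.

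The main obstacle I anticipate is the first step — proving the limit exists — because the naive pairing of $f''$ against $g'$ runs into the non-integrable singularity of $(1-x^2)^{-1}$; the resolution is to avoid ever integrating $|f''|$ alone and instead always keep the compensating factor $(1-x^2)^{1/2}$ attached, using $(1-x^2)^{1/2}f''\in L^2$ paired with $(1-x^2)^{1/2}g''\in L^2$ for the "$f''g''$" term and $(1-x^2)f'''\in L^2$ paired with $g'\in L^2$ for the "$f'''g'$" term, while the problematic "$xf''\cdot g'$" cross term is absorbed by noticing it equals $\frac{d}{dx}\big((1-x^2)f''\overline g'\big)$ minus the two good terms; so existence of the limit is equivalent to integrability of those two good terms, which holds. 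I would also remark, as the authors do elsewhere, that this is precisely the content needed to later show $T_1$ is symmetric on $V_3$ via a Dirichlet-type formula in the $(\cdot,\cdot)_1$ inner product.
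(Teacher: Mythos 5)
Your overall skeleton (first show $\lim_{x\to1^-}(1-x^2)f''(x)\overline{g}'(x)$ exists, then rule out a nonzero value by an integrability contradiction) is the same as the paper's, but the execution of the crucial first step has a genuine gap. After writing $(1-x^2)f''(x)\overline{g}'(x)=C+\int_0^x\big((1-t^2)f''\big)'\overline{g}'\,dt+\int_0^x(1-t^2)f''\overline{g}''\,dt$, you still must show the first integral converges, and expanding $\big((1-t^2)f''\big)'=(1-t^2)f'''-2tf''$ leaves the cross term $-2tf''\overline{g}'$ uncontrolled. None of your proposed fixes works: the pairing of $(1-t^2)^{1/2}f''\in L^2$ with $(1-t^2)^{-1/2}g'$ requires $(1-x^2)^{-1/2}g'\in L^2$, which is false in general for $g\in V_3$ (take $g(x)=1-x^2$, which satisfies all four conditions defining $V_3$ but has $g'(\pm1)\neq0$, so $\int_0^1(1-x^2)^{-1}|g'|^2\,dx=\infty$); the ``absorption'' remark is circular, since the cross term is exactly the part of $\frac{d}{dx}\big[(1-x^2)f''\overline{g}'\big]$ whose convergence is in question, so integrability of the two good terms alone is \emph{not} equivalent to existence of the limit; and Dirichlet's formula on $\Delta$ controls the boundary term $f'\overline{g}$, not $(1-x^2)f''\overline{g}'$, so invoking it here assumes what is to be proved. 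The paper never expands $\big((1-t^2)f''\big)'$ at all: it uses the left-definite fact (Theorem \ref{Left-Definite Jacobi Theorem}) that $T_1f=\ell[f]=B_1f\in V_1$ for $f\in V_3$, hence $(\ell[f])'\in L^2(-1,1)$, so that $\big((1-t^2)f''\big)'\overline{g}'=\big(kf'-(\ell[f])'\big)\overline{g}'\in L^1(-1,1)$; one integration by parts together with $(1-x^2)f''\overline{g}''\in L^1$ (Cauchy--Schwarz with the half-power weights, which you do have) then yields existence of the limit. This use of $\ell[f]\in V_1$ is the key ingredient missing from your argument.

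The contradiction step is also not closed as sketched. From $(1-x^2)f''g'\geq c_0$ together with only $g'\in L^2$ and $(1-x^2)^{1/2}f''\in L^2$ you obtain $\int(1-x^2)^{-1}(g')^{-2}\,dx<\infty$, which does not force $\int(1-x^2)^{-1}\,dx<\infty$: for instance $g'\sim(1-x)^{-1/2}\big(\log\frac{e}{1-x}\big)^{-1}$ is compatible with both of those constraints, so no contradiction can follow from them alone; and the alternative of integrating $|f''||g'|\geq c_0(1-t^2)^{-1}$ is unavailable because $f''g'\in L^1$ is precisely what you could not establish. The paper's proof needs the additional fact $(1-x^2)f''g''\in L^1$ to show, via the estimate $\int_{x_0}^x(1-t^2)f''|g''|\,dt\geq c\,|\ln g'(x)|-c_1$, that $g'$ is bounded above and below near the endpoint; only then does $(1-x^2)f''\geq\widetilde{c}>0$ follow, which contradicts $(1-x^2)^{1/2}f''\in L^2(-1,1)$. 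So both halves of your argument need the finer inputs ($\ell[f]\in V_1$ in step one, the $g''$ condition and boundedness of $g'$ in step two) that the paper's proof supplies.
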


\noindent

\begin{proof}
It suffices to prove this result for $x\rightarrow1^{-}.$ Let $f,g\in V_{3}.$
Without loss of generality, assume that $f,g$ are both real-valued. Since
$V_{3}\subset V_{1}$ and $T_{1}f\in V_{1},$ we see that%
\[
f^{\prime},(T_{1}f)^{\prime},g^{\prime}\in L^{2}(-1,1).
\]
Hence $(T_{1}f)^{\prime}g^{\prime},f^{\prime}g^{\prime}\in L^{1}(-1,1).$ For
$0\leq x<1$,
\[
\int_{0}^{x}(T_{1}f)^{\prime}(t)g^{\prime}(t)dt=-\int_{0}^{x}\left(
(1-t^{2})f^{\prime\prime}(t)\right)  ^{\prime}g^{\prime}(t)dt+k\int_{0}%
^{x}f^{\prime}(t)g^{\prime}(t)dt.
\]
It follows that
\begin{equation}
\lim_{x\rightarrow1^{-}}\int_{0}^{x}\left(  (1-t^{2})f^{\prime\prime
}(t)\right)  ^{\prime}g^{\prime}(t)dt \label{(1) Strauss}%
\end{equation}
exists and is finite. Integration by parts shows that%
\[
\int_{0}^{x}\left(  (1-t^{2})f^{\prime\prime}(t)\right)  ^{\prime}g^{\prime
}(t)dt=(1-t^{2})f^{\prime\prime}(t)g^{\prime}(t)\mid_{0}^{x}-\int_{0}%
^{x}(1-t^{2})f^{\prime\prime}(t)g^{\prime\prime}(t)dt.
\]
Since $(1-x^{2})^{1/2}f^{\prime\prime}(x)$ and $(1-x^{2})^{1/2}g^{\prime
\prime}(x)\in L^{2}(-1,1)$, we see that%
\begin{equation}
(1-x^{2})f^{\prime\prime}(x)g^{\prime\prime}(x)\in L^{1}(-1,1),
\label{L^1 condition}%
\end{equation}
so%
\[
\lim_{x\rightarrow1^{-}}\int_{0}^{x}(1-t^{2})f^{\prime\prime}(t)g^{\prime
\prime}(t)dt
\]
exists and is finite. It follows that%
\[
\lim_{x\rightarrow1^{-}}(1-x^{2})f^{\prime\prime}(x)g^{\prime}(x)
\]
exists and is finite. Suppose%
\[
\lim_{x\rightarrow1^{-}}(1-x^{2})f^{\prime\prime}(x)g^{\prime}(x)=:2c
\]
where we assume that $c\neq0.$ Without loss of generality, assume $c>0.$ Then
there exists $x_{0}\in\lbrack0,1)$ such that%
\begin{equation}
(1-x^{2})f^{\prime\prime}(x)g^{\prime}(x)\geq c\text{ and}
\label{dagger Strauss}%
\end{equation}%
\[
f^{\prime\prime}(x)>0,g^{\prime}(x)>0\text{ \ \ \ \ }(x\in\lbrack x_{0},1)),
\]
implying%
\[
(1-x^{2})f^{\prime\prime}(x)\left\vert g^{\prime\prime}(x)\right\vert \geq
c\frac{\left\vert g^{\prime\prime}(x)\right\vert }{g^{\prime}(x)}\text{
\ \ \ \ }(x\in\lbrack x_{0},1)).
\]
Hence,%
\begin{align}
\int_{x_{0}}^{x}(1-t^{2})f^{\prime\prime}(t)\left\vert g^{\prime\prime
}(t)\right\vert dt  &  \geq c\int_{x_{0}}^{x}\frac{\left\vert g^{\prime\prime
}(t)\right\vert }{g^{\prime}(t)}dt\nonumber\\
&  \geq c\left\vert \int_{x_{0}}^{x}\frac{g^{\prime\prime}(t)}{g^{\prime}%
(t)}dt\right\vert =c\left\vert \ln\left(  g^{\prime}(x)\right)  \right\vert
-c_{1}\text{ \ \ \ \ }(x\in\lbrack x_{0},1)). \label{star Strauss}%
\end{align}
Therefore,%
\[
\lim\sup_{x\rightarrow1^{-}}\left\vert \ln\left(  g^{\prime}(x)\right)
\right\vert <\infty.
\]
\underline{Claim}: There exist positive constants $M_{1},M_{2}$ such that%
\begin{equation}
M_{1}<g^{\prime}(x)<M_{2}\text{ \ \ \ \ }(x\in\lbrack x_{0},1)).
\label{bounded g}%
\end{equation}
Otherwise, if $g^{\prime}(x)$ is unbounded above, there exists a sequence
$\left\{  x_{n}\right\}  _{n\geq1}\subset\lbrack x_{0},1)$ such that%
\[
g^{\prime}(x_{n})\rightarrow\infty\text{ as }n\rightarrow\infty.
\]
It follows from (\ref{star Strauss}) that%
\[
(1-x^{2})f^{\prime\prime}(x)g^{\prime\prime}(x)\notin L^{1}(x_{0},1),
\]
contradicting (\ref{L^1 condition}); hence, $M_{2}>0$ exists as claimed. If an
$M_{1}$, satisfying (\ref{bounded g}), does not exist, then there exists a
sequence $\left\{  y_{n}\right\}  \subset\lbrack x_{0},1)$ such that%
\[
g^{\prime}(y_{n})\rightarrow0\text{ as }n\rightarrow\infty.
\]
Again, it follows from (\ref{star Strauss}) that
\[
(1-x^{2})f^{\prime\prime}(x)g^{\prime\prime}(x)\notin L^{1}(-1,1),
\]
again contradicting (\ref{L^1 condition}). From the claim, it now follows from
(\ref{dagger Strauss}) that%
\[
(1-x^{2})f^{\prime\prime}(x)\geq\frac{c}{g^{\prime}(x)}>\frac{c}{M_{2}%
}=:\widetilde{c}\text{ \ \ \ \ }(x\in\lbrack x_{0},1)).
\]
Consequently,
\[
(1-x^{2})\left(  f^{\prime\prime}(x)\right)  ^{2}>\frac{\widetilde{c}^{\text{
}2}}{1-x^{2}}\text{ \ \ \ \ }(x\in\lbrack x_{0},1)).
\]
Integrating over $[x_{0},1)$ and using the fact that $(1-x^{2})^{1/2}%
f^{\prime\prime}(x)\in L^{2}(-1,1),$ we see that%
\[
\infty>\int_{x_{0}}^{1}(1-t^{2})(f^{\prime\prime}(t))^{2}dt>\widetilde{c}%
^{\text{ }2}\int_{x_{0}}^{1}\frac{dt}{1-t^{2}}=\infty,
\]
a contradiction unless $\widetilde{c}=c=0.$ This completes the proof.
\end{proof}

\begin{theorem}
$T_{1}$ is symmetric in $\left(  W_{1},\phi(\cdot,\cdot)\right)  $.
\end{theorem}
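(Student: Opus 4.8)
The plan is to show that $\phi(T_1 f, g) = \phi(f, T_1 g)$ for all $f, g \in \mathcal{D}(T_1) = V_3$, using the Dirichlet-type formula for $\ell[\cdot]$ together with the boundary vanishing established in Lemma~\ref{f(plusminus 1)=0}, Theorem~\ref{SLP and D}, and especially Theorem~\ref{(4) Hermite}. First I would recall that since $V_3 \subset V_1 = W_1$, every $f \in V_3$ satisfies $f(\pm 1) = 0$, so the boundary terms $\tfrac12 f(\pm 1)\overline{g}(\pm 1)$ in $\phi(\cdot,\cdot)$ drop out entirely, and $\phi(f,g)$ reduces to $\int_{-1}^1 f'(x)\overline{g}'(x)\,dx$ on $W_1$. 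Thus it suffices to prove
\[
\int_{-1}^1 (T_1 f)'(x)\,\overline{g}'(x)\,dx = \int_{-1}^1 f'(x)\,\overline{(T_1 g)}'(x)\,dx.
\]

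The key computation is to integrate by parts on a subinterval $[a,b] \subset (-1,1)$. Writing $T_1 f = -(1-x^2)f'' + kf$, we have $(T_1 f)' = -\big((1-x^2)f''\big)' + kf'$, so
\[
\int_a^b (T_1 f)'\,\overline{g}'\,dx = -\int_a^b \big((1-x^2)f''\big)'\,\overline{g}'\,dx + k\int_a^b f'\,\overline{g}'\,dx.
\]
Integrating the first term by parts gives $-(1-x^2)f''\overline{g}'\big|_a^b + \int_a^b (1-x^2)f''\,\overline{g}''\,dx$. The middle integral $\int_a^b (1-x^2)f''\overline{g}''\,dx$ is symmetric in $f$ and $g$, as is $k\int_a^b f'\overline{g}'\,dx$; these are exactly the terms one obtains by the same manipulation applied to $\int_a^b f'\,\overline{(T_1 g)}'\,dx$. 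So the whole question reduces to the vanishing of the boundary term $(1-x^2)f''(x)\overline{g}'(x)$ (and its conjugate-symmetric partner $(1-x^2)\overline{g}''(x)f'(x)$) as $x \to \pm 1$. That is precisely the content of Theorem~\ref{(4) Hermite}: $\lim_{x\to\pm1^\mp}(1-x^2)f''(x)\overline{g}'(x) = 0$ for $f, g \in V_3$. One also needs to know the integrals converge as $a \to -1^+$, $b \to 1^-$; this follows from \eqref{L^1 condition} in the proof of Theorem~\ref{(4) Hermite} (giving $(1-x^2)f''\overline{g}'' \in L^1$) and from $f', g' \in L^2(-1,1)$ (giving $f'\overline{g}' \in L^1$).

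Putting these pieces together: let $a \to -1^+$ and $b \to 1^-$ in the integration-by-parts identity. The boundary terms vanish by Theorem~\ref{(4) Hermite}, and the remaining integrals converge by the integrability facts just noted, yielding
\[
\int_{-1}^1 (T_1 f)'\,\overline{g}'\,dx = \int_{-1}^1 (1-x^2)f''\,\overline{g}''\,dx + k\int_{-1}^1 f'\,\overline{g}'\,dx = \int_{-1}^1 f'\,\overline{(T_1 g)}'\,dx,
\]
where the last equality holds because the right-hand expression is manifestly symmetric under interchanging $f$ and $g$ (with the conjugate in the correct place). Hence $\phi(T_1 f, g) = \phi(f, T_1 g)$, and since $V_3 = \mathcal{D}(T_1)$ is dense in $W_1$ (it contains $\mathcal{P}_{-1}[-1,1]$, which is dense by Theorem~\ref{(3) Hermite} and Theorem~\ref{Equivalent Inner Products}), $T_1$ is symmetric. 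I expect the only genuine subtlety to be the careful bookkeeping of which limits exist — all of which has been front-loaded into Theorem~\ref{(4) Hermite} and its proof — so the argument here is essentially a clean integration by parts once that theorem is in hand.
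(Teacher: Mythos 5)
Your proposal is correct and follows essentially the same route as the paper: reduce $\phi(T_1f,g)$ to $\int_{-1}^{1}(T_1f)'\overline{g}'\,dx$ using $f(\pm1)=g(\pm1)=0$, integrate by parts, invoke Theorem~\ref{(4) Hermite} to kill the boundary term $(1-x^{2})f''\overline{g}'\big|_{-1}^{1}$, and observe that the surviving integrals $\int(1-x^{2})f''\overline{g}''\,dx + k\int f'\overline{g}'\,dx$ are symmetric in $f$ and $g$. Your extra remarks on the $L^{1}$ convergence of the improper integrals and on the density of $V_{3}$ are sound and only make explicit what the paper leaves implicit.
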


\begin{proof}
Since $T_{1}$ has the Jacobi polynomials $\left\{  P_{n}^{(-1,-1)}\right\}
_{n=2}^{\infty}$ as a complete set of eigenfunctions (see Remark
\ref{Jacobi polynomials and W_1}), it suffices to show that $T_{1}$ is
Hermitian. Let $f,g\in\mathcal{D}(T_{1})=V_{3}.$ Since $V_{3}\subset V_{1}$
and $T_{1}f,T_{1}g\in V_{1}$, we see that%
\[
f(\pm1)=g(\pm1)=0=T_{1}f(\pm1)=T_{1}g(\pm1).
\]
Hence,%
\begin{align*}
\phi\left(  T_{1}f,g\right)   &  =\int_{-1}^{1}\left(  T_{1}f\right)
^{\prime}(x)\overline{g}^{\prime}(x)dx\\
&  =\int_{-1}^{1}\left[  -\left(  (1-x^{2})f^{\prime\prime}(x)\right)
^{\prime}+kf^{\prime}(x)\right]  \overline{g}^{\prime}(x)dx\\
&  =-(1-x^{2})f^{\prime\prime}(x)\overline{g}^{\prime}(x)\mid_{-1}^{1}%
+\int_{-1}^{1}\left[  (1-x^{2})f^{\prime\prime}(x)\overline{g}^{\prime\prime
}(x)+kf^{\prime}(x)\overline{g}^{\prime}(x)\right]  dx\\
&  =\int_{-1}^{1}\left[  (1-x^{2})f^{\prime\prime}(x)\overline{g}%
^{\prime\prime}(x)+kf^{\prime}(x)\overline{g}^{\prime}(x)\right]  dx
\end{align*}
since $-(1-x^{2})f^{\prime\prime}(x)\overline{g}^{\prime}(x)\mid_{-1}^{1}=0$
by Theorem \ref{(4) Hermite}. A similar calculation shows that
\begin{align*}
\phi\left(  f,T_{1}g\right)   &  =\int_{-1}^{1}\left[  -\left(  (1-x^{2}%
)\overline{g}^{\prime\prime}(x)\right)  ^{\prime}+k\overline{g}^{\prime
}(x)\right]  f^{\prime}(x)dx\\
&  =\int_{-1}^{1}\left[  (1-x^{2})f^{\prime\prime}(x)\overline{g}%
^{\prime\prime}(x)+kf^{\prime}(x)\overline{g}^{\prime}(x)\right]  dx.
\end{align*}
Hence $\phi\left(  f,T_{1}g\right)  =\phi\left(  T_{1}f,g\right)  ;$ that is,
$T_{1}$ is symmetric in $\left(  W_{1},\phi(\cdot,\cdot)\right)  .$
\end{proof}

\begin{theorem}
The operator $T_{1}$ has the following properties:
\end{theorem}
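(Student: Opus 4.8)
The plan is to read off all properties of $T_{1}$ from the first left-definite operator $B_{1}$ together with the equivalence of the inner products $\phi(\cdot,\cdot)$ and $(\cdot,\cdot)_{1}$ on $W_{1}=V_{1}$ (Theorem~\ref{Equivalent Inner Products}). Recall $B_{1}$ is self-adjoint in $H_{1}=(V_{1},(\cdot,\cdot)_{1})$ with $\mathcal{D}(B_{1})=V_{3}$ and $B_{1}f=\ell[f]$, and that by Theorem~\ref{Left-Definite Jacobi Theorem} and Remark~\ref{Jacobi polynomials and W_1} the Jacobi polynomials $\{\widetilde{P}_{n}^{(-1,-1)}\}_{n=2}^{\infty}$ form a complete orthogonal set of eigenfunctions for $B_{1}$, with $\ell[\widetilde{P}_{n}^{(-1,-1)}]=(n(n-1)+k)\widetilde{P}_{n}^{(-1,-1)}$, and that the same polynomials form a complete orthogonal set in $(W_{1},\phi)$. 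Since each $\widetilde{P}_{n}^{(-1,-1)}$ $(n\geq 2)$ is a polynomial vanishing at $\pm1$, it lies in $\mathcal{P}_{-1}[-1,1]\subset V_{3}=\mathcal{D}(T_{1})$, so $T_{1}$ has $\{\widetilde{P}_{n}^{(-1,-1)}\}_{n=2}^{\infty}$ as a complete orthogonal set of eigenfunctions in $(W_{1},\phi)$ as well.

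The central step is self-adjointness of $T_{1}$ in $(W_{1},\phi(\cdot,\cdot))$. We already know $T_{1}$ is symmetric, and it has a complete orthogonal set of eigenvectors inside its domain; the standard spectral argument then shows $T_{1}$ is essentially self-adjoint, with $\mathcal{D}(\overline{T_{1}})=\{f\in W_{1}:\sum_{n\geq 2}\lambda_{n}^{2}|c_{n}(f)|^{2}\|\widetilde{P}_{n}^{(-1,-1)}\|_{\phi}^{2}<\infty\}$, where $\lambda_{n}=n(n-1)+k$ and $c_{n}(f)$ is the $n$-th coefficient in the expansion $f=\sum_{n\geq 2}c_{n}(f)\widetilde{P}_{n}^{(-1,-1)}$. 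The key observation is that $c_{n}(f)$ does not depend on which of the two equivalent inner products is used to compute it: the $\widetilde{P}_{n}^{(-1,-1)}$ are orthogonal with positive norms under both $\phi$ and $(\cdot,\cdot)_{1}$, and both expansions of $f$ converge in equivalent norms to $f$, so $c_{n}(f)=\phi(f,\widetilde{P}_{n}^{(-1,-1)})/\|\widetilde{P}_{n}^{(-1,-1)}\|_{\phi}^{2}=(f,\widetilde{P}_{n}^{(-1,-1)})_{1}/\|\widetilde{P}_{n}^{(-1,-1)}\|_{1}^{2}$. Because $B_{1}$ is self-adjoint in $H_{1}$ with this same orthogonal eigenbasis, $V_{3}=\mathcal{D}(B_{1})=\{f\in V_{1}:\sum_{n\geq 2}\lambda_{n}^{2}|c_{n}(f)|^{2}\|\widetilde{P}_{n}^{(-1,-1)}\|_{1}^{2}<\infty\}$, and feeding $f=\widetilde{P}_{n}^{(-1,-1)}$ into the norm-equivalence inequalities gives $\|\widetilde{P}_{n}^{(-1,-1)}\|_{1}^{2}$ and $\|\widetilde{P}_{n}^{(-1,-1)}\|_{\phi}^{2}$ comparable uniformly in $n$. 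Hence $V_{3}=\mathcal{D}(\overline{T_{1}})$, so $T_{1}$ is closed, and therefore self-adjoint in $(W_{1},\phi(\cdot,\cdot))$.

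The remaining assertions follow quickly. For the lower bound, the symmetry computation already carried out gives, for $f\in\mathcal{D}(T_{1})=V_{3}$ (using $f(\pm1)=0$ on $W_{1}$), $\phi(T_{1}f,f)=\int_{-1}^{1}[(1-x^{2})|f''(x)|^{2}+k|f'(x)|^{2}]\,dx\geq k\int_{-1}^{1}|f'(x)|^{2}\,dx=k\,\phi(f,f)$, so $T_{1}\geq kI$ on $W_{1}$. Finally, since $T_{1}$ is self-adjoint with the complete orthogonal eigenbasis $\{\widetilde{P}_{n}^{(-1,-1)}\}_{n\geq 2}$ and eigenvalues $\lambda_{n}=n(n-1)+k$ bounded below by $k$ and tending to $\infty$, its spectrum is purely discrete: $\sigma(T_{1})=\sigma_{p}(T_{1})=\{n(n-1)+k\mid n=2,3,\dots\}$.

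I expect the main obstacle to be precisely the domain identification $\mathcal{D}(T_{1})=V_{3}=\mathcal{D}(\overline{T_{1}})$; everything rests on noting that the Fourier coefficients relative to the single sequence $\{\widetilde{P}_{n}^{(-1,-1)}\}$ coincide for the $\phi$- and $(\cdot,\cdot)_{1}$-expansions and that the squared eigenfunction norms are comparable in the two inner products. An equivalent route, if one prefers to avoid the coefficient comparison, is to introduce the Gram operator $G$ on $V_{1}$ defined by $(f,g)_{1}=\phi(Gf,g)$: it is bounded, boundedly invertible, $\phi$-positive, and diagonalized by the $\widetilde{P}_{n}^{(-1,-1)}$, hence commutes with $B_{1}$ and maps $V_{3}$ onto $V_{3}$, from which the $\phi$-self-adjointness of $T_{1}=B_{1}$ is immediate.
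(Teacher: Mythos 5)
Your proposal is correct, and the ancillary facts you use (the coefficient identity between the two expansions, the uniform comparability of $\|\widetilde{P}_{n}^{(-1,-1)}\|_{1}$ and $\|\widetilde{P}_{n}^{(-1,-1)}\|_{\phi}$, and the diagonal descriptions of $\mathcal{D}(B_{1})$ and $\mathcal{D}(\overline{T_{1}})$) are all legitimate, since the polynomials are orthogonal with respect to both inner products, the finite spans lie in $V_{3}$, and both operators are symmetric with this complete eigenbasis. However, your mechanism for self-adjointness differs from the paper's. The paper quotes part (iii) from Remark \ref{Jacobi polynomials and W_1}, then invokes the classical fact (Hellwig) that a closed symmetric operator with a complete set of eigenfunctions is self-adjoint; closedness of $T_{1}$ in $(W_{1},\phi(\cdot,\cdot))$ is obtained almost for free, because closedness is a purely topological property of the graph: by Theorem \ref{Equivalent Inner Products} convergence in $\phi$ is convergence in $(\cdot,\cdot)_{1}$, where $T_{1}=B_{1}$ is self-adjoint and hence closed. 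The spectrum is then identified exactly as in your last step. You instead prove essential self-adjointness and pin down $\mathcal{D}(\overline{T_{1}})$ via the Fourier expansion in the common eigenbasis, using the norm equivalence quantitatively (comparability of the eigenfunction norms) rather than topologically, and conclude $\mathcal{D}(\overline{T_{1}})=\mathcal{D}(B_{1})=V_{3}$. What your route buys is an explicit spectral description of the closure and of the domain $V_{3}$ in terms of coefficients; what it costs is several spectral-theoretic verifications that the paper's closedness-transfer argument sidesteps. Your lower bound $\phi(T_{1}f,f)\geq k\,\phi(f,f)$ is extra and not part of the stated theorem, and your Gram-operator alternative is also viable (the $\widetilde{P}_{n}^{(-1,-1)}$ do diagonalize $G$ because they are orthogonal in both inner products), though its final step still relies on the same uniform comparability of the eigenvalues of $G$.
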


\begin{enumerate}
\item[(i)] $T_{1}$ is self-adjoint in $\left(  W_{1},\phi(\cdot,\cdot)\right)
;$

\item[(ii)] $\sigma(T_{1})=\left\{  n(n-1)+k\mid n\geq2\right\}  ;$

\item[(iii)] $\left\{  P_{n}^{(-1,-1)}\right\}  _{n=2}^{\infty}$ is a complete
orthonormal set of eigenfunctions of $T_{1}$ in $\left(  W_{1},\phi
(\cdot,\cdot)\right)  .$
\end{enumerate}

\begin{proof}
Part (iii) is established in Remark \ref{Jacobi polynomials and W_1}. Since it
is well known (for example, see \cite[Theorem 3, p. 373 and Theorem 6, p.
184]{Hellwig}) that a closed, symmetric operator with a complete set of
eigenfunctions is self-adjoint, it suffices, in order to establish (i), to
show $T_{1}$ is closed. To this end, let$\left\{  f_{n}\right\}
\subseteq\mathcal{D}(T_{1})=V_{3}$ such that
\begin{align*}
f_{n}  &  \rightarrow f\text{ \ \ \ \ in }\left(  W_{1},\phi(\cdot
,\cdot)\right) \\
T_{1}f_{n}  &  \rightarrow g\text{ \ \ \ \ in }\left(  W_{1},\phi(\cdot
,\cdot)\right)  .
\end{align*}
We show that $f\in\mathcal{D}(T_{1})$ and $T_{1}f=g.$ Since, by Theorem
\ref{Equivalent Inner Products}, $\phi(\cdot,\cdot)$ and $(\cdot,\cdot)_{1}$
are equivalent, there exist positive constants $c_{1}$ and $c_{2}$ such that%
\[
c_{1}\left\Vert f\right\Vert _{\phi}\leq\left\Vert f\right\Vert _{1}\leq
c_{2}\left\Vert f\right\Vert _{\phi}\text{ \ \ \ \ }(f\in W_{1}=V_{1}).
\]
Hence,%
\[
\left\Vert f_{n}-f\right\Vert _{1}\leq c_{2}\left\Vert f_{n}-f\right\Vert
_{\phi}\rightarrow0;
\]
in particular,
\[
f_{n}\rightarrow f\text{ \ \ \ \ in }\left(  W_{1},(\cdot,\cdot)_{1}\right)
.
\]
Similarly,
\[
\left\Vert T_{1}f_{n}-g\right\Vert _{1}\leq c_{2}\left\Vert T_{1}%
f_{n}-g\right\Vert _{\phi}\rightarrow0
\]
so%
\[
T_{1}f_{n}\rightarrow g\text{ \ \ \ \ in }\left(  W_{1},(\cdot,\cdot
)_{1}\right)  .
\]
Since $T_{1}$ is self-adjoint in $\left(  W_{1},(\cdot,\cdot)_{1}\right)  $,
it is closed implying that $f\in\mathcal{D}(T_{1})$ and $T_{1}f=g.$ Also, we
know that, for $n\geq2,$%
\[
T_{1}P_{n}^{(-1,-1)}=\ell\lbrack P_{n}^{(-1,-1)}]=(n(n-1)+k)P_{n}^{(-1,-1)}.
\]
This implies%
\[
\left\{  n(n-1)+k\mid n\geq2\right\}  \subseteq\sigma(T_{1}).
\]
However, from the completeness of $\left\{  P_{n}^{(-1,-1)}\right\}
_{n=2}^{\infty}$ and since $\lambda_{n}:=n(n-1)+k\rightarrow\infty$, it
follows from well-known results that
\[
\sigma(T_{1})=\left\{  n(n-1)+k\mid n\geq2\right\}  ,
\]
which proves (ii).
\end{proof}

Next, we define the operator $T_{2}:\mathcal{D}(T_{2})\subset W_{2}\rightarrow
W_{2}$ by%
\begin{align*}
(T_{2}f)(x)  &  =\ell\lbrack f](x)\\
\mathcal{D}(T_{2})  &  :=W_{2}.
\end{align*}
It is straightforward to check that $T_{2}$ is symmetric in $W_{2}$ and, since
$\mathcal{D}(T_{2})=W_{2}$, it follows that $T_{2}$ is self-adjoint.

We now construct the self-adjoint operator $T$ in $\left(  W,\phi(\cdot
,\cdot)\right)  $, generated by the Jacobi differential expression
$\ell\lbrack\cdot],$ which has the \textit{entire} set of Jacobi polynomials
$\left\{  P_{n}^{(-1,-1)}\right\}  _{n=0}^{\infty}$ as eigenfunctions and
spectrum $\sigma(T)=\left\{  n(n-1)+k\mid n\in%
\mathbb{N}
_{0}\right\}  .$

We define the domain of this operator $T$ to be
\[
\mathcal{D}(T):=\mathcal{D}(T_{1})\oplus\mathcal{D}(T_{2})=V_{3}\oplus W_{2}.
\]
Then each $f\in\mathcal{D}(T)$ can be written as $f=f_{1}+f_{2},$ where
$f_{i}\in\mathcal{D}(T_{i})$ ($i=1,2$). Define $T:\mathcal{D}(T)\subset
W\rightarrow W$ by%
\[
Tf:=T_{1}f_{1}+T_{2}f_{2}=\ell\lbrack f_{1}]+\ell\lbrack f_{2}]=\ell\lbrack
f].
\]
A proof that operators of this form are self-adjoint can be found in
\cite[Theorem 11.1]{Everitt-Littlejohn-Wellman}. Furthermore, since we know
explicitly the domains of $T_{1}$ and $T_{2}$, we can specifically determine
the domain $\mathcal{D}(T)$ of $T.$

\begin{theorem}
$T$ is self-adjoint in $\left(  W,\phi(\cdot,\cdot)\right)  $ and has domain%
\begin{align*}
\mathcal{D}(T) &  =\{f:[-1,1]\rightarrow%
\mathbb{C}
\mid f\in AC[-1,1];f^{\prime},f^{\prime\prime}\in AC_{\mathrm{loc}}(-1,1);\\
&  \hspace{1.5in}(1-x^{2})f^{\prime\prime\prime},(1-x^{2})^{1/2}%
f^{\prime\prime},f^{\prime}\in L^{2}(-1,1)\}.
\end{align*}
Furthermore, $\sigma(T)=\left\{  n(n-1)+k\mid n\in%
\mathbb{N}
_{0}\right\}  $ and has the Jacobi polynomials $\left\{  \widetilde{P}%
_{n}^{(-1,-1)}\right\}  _{n=0}^{\infty}$ as a complete set of eigenfunctions.
\end{theorem}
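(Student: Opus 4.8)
The plan is to assemble $T$ from the two pieces $T_{1}$ and $T_{2}$ already analyzed, using the orthogonal decomposition $W=W_{1}\oplus W_{2}$ of Theorem~\ref{Fundamental Decomposition}. Since $T_{1}$ is self-adjoint in $\left(W_{1},\phi(\cdot,\cdot)\right)$ with $\mathcal{D}(T_{1})=V_{3}$, since $T_{2}$ is self-adjoint in the two-dimensional space $W_{2}$ with $\mathcal{D}(T_{2})=W_{2}$, and since $W_{1}\perp W_{2}$ in $\left(W,\phi(\cdot,\cdot)\right)$, self-adjointness of $T=T_{1}\oplus T_{2}$ in $\left(W,\phi(\cdot,\cdot)\right)$ is immediate from the standard result on orthogonal direct sums of self-adjoint operators, \cite[Theorem 11.1]{Everitt-Littlejohn-Wellman}. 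So the first assertion requires no new work.

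The substantive part is the explicit form of $\mathcal{D}(T)=V_{3}\oplus W_{2}$; write $\mathcal{E}$ for the set displayed in the theorem. For the inclusion $V_{3}\oplus W_{2}\subseteq\mathcal{E}$, I would take $f=f_{1}+f_{2}$ with $f_{1}\in V_{3}$ and $f_{2}\in W_{2}$ a polynomial of degree $\leq 1$, so $f_{2}''\equiv f_{2}'''\equiv 0$; then $f_{1}\in V_{3}\subset V_{1}=W_{1}\subset W$ gives $f_{1}\in AC[-1,1]$, hence $f\in AC[-1,1]$, while $f',f''\in AC_{\mathrm{loc}}(-1,1)$, $(1-x^{2})f'''=(1-x^{2})f_{1}'''\in L^{2}(-1,1)$ and $(1-x^{2})^{1/2}f''=(1-x^{2})^{1/2}f_{1}''\in L^{2}(-1,1)$ come directly from membership in $V_{3}$, and $f'=f_{1}'+f_{2}'$ lies in $L^{2}(-1,1)$ since $f_{1}'\in L^{2}(-1,1)$ and $f_{2}'$ is constant on a finite interval. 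For the reverse inclusion $\mathcal{E}\subseteq V_{3}\oplus W_{2}$, I would decompose $f\in\mathcal{E}$ as $f=f_{1}+f_{2}$ via $(\ref{Representation1})$, so that $f_{2}\in W_{2}$ and $f_{1}(\pm 1)=0$; all of the defining conditions on $f_{1}\in V_{3}$ are then routine (again using $f_{2}''\equiv f_{2}'''\equiv 0$ and finiteness of the interval) except for the integrability of $(1-x^{2})^{-1/2}f_{1}$, and this holds precisely because $f_{1}\in W_{1}=V_{1}$ by Theorem~\ref{W1,1=V1}. I expect this last step to be the only genuinely delicate point of the proof, and it is exactly why Theorem~\ref{W1,1=V1} was isolated.

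For the spectrum and eigenfunctions I would use $\sigma(T)=\sigma(T_{1})\cup\sigma(T_{2})$, which is legitimate here since $\sigma(T_{1})=\{n(n-1)+k\mid n\geq 2\}$ is a closed discrete set tending to infinity and $\sigma(T_{2})$ is finite. We already have, from the preceding theorem on $T_{1}$ and Remark~\ref{Jacobi polynomials and W_1}, that $\{\widetilde{P}_{n}^{(-1,-1)}\}_{n=2}^{\infty}$ is a complete orthogonal set of eigenfunctions of $T_{1}$ in $W_{1}$. For $T_{2}$, a one-line computation from $\ell[y]=-(1-x^{2})y''+ky$ gives $\ell[\widetilde{P}_{0}^{(-1,-1)}]=k\widetilde{P}_{0}^{(-1,-1)}$ and $\ell[\widetilde{P}_{1}^{(-1,-1)}]=k\widetilde{P}_{1}^{(-1,-1)}$, so $\{\widetilde{P}_{0}^{(-1,-1)},\widetilde{P}_{1}^{(-1,-1)}\}$ is a basis of eigenfunctions of $T_{2}$ and $\sigma(T_{2})=\{k\}=\{0\cdot(0-1)+k\}=\{1\cdot(1-1)+k\}$; the union then yields $\sigma(T)=\{n(n-1)+k\mid n\in\mathbb{N}_{0}\}$. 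Finally, completeness of $\{\widetilde{P}_{n}^{(-1,-1)}\}_{n=0}^{\infty}$ in $\left(W,\phi(\cdot,\cdot)\right)$ follows by combining completeness of $\{\widetilde{P}_{n}^{(-1,-1)}\}_{n=2}^{\infty}$ in $W_{1}$, the fact that $\{\widetilde{P}_{0}^{(-1,-1)},\widetilde{P}_{1}^{(-1,-1)}\}$ spans $W_{2}$, and the orthogonal decomposition $W=W_{1}\oplus W_{2}$ of Theorem~\ref{Fundamental Decomposition}. Apart from the identification $W_{1}=V_{1}$ used in the domain computation, every step is bookkeeping on top of the self-adjointness of $T_{1}$ and $T_{2}$ and the decomposition of $W$.
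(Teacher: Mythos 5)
Your proposal is correct and follows essentially the same route as the paper: self-adjointness of $T=T_{1}\oplus T_{2}$ via \cite[Theorem 11.1]{Everitt-Littlejohn-Wellman}, the domain identification $\mathcal{D}(T)=V_{3}\oplus W_{2}$ using the decomposition $(\ref{Representation1})$, and the spectrum/completeness statements assembled from the results already proved for $T_{1}$ and $T_{2}$. The only difference is that you spell out the details the paper declares ``clear,'' in particular isolating the condition $(1-x^{2})^{-1/2}f_{1}\in L^{2}(-1,1)$ and correctly handling it through Theorem~\ref{W1,1=V1}, which is exactly the implicit crux of the paper's converse inclusion.
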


\begin{proof}
Define%
\begin{align*}
\mathcal{D}  &  :=\{f:[-1,1]\rightarrow%
\mathbb{C}
\mid f\in AC[-1,1];f^{\prime},f^{\prime\prime}\in AC_{\mathrm{loc}}(-1,1);\\
&  \hspace{1.5in}(1-x^{2})f^{\prime\prime\prime},(1-x^{2})^{1/2}%
f^{\prime\prime},f^{\prime}\in L^{2}(-1,1)\}.
\end{align*}
Since $\mathcal{D}(T_{i})\subset\mathcal{D}$ for $i=1,2,$ it is clear that
$\mathcal{D}(T)=\mathcal{D}(T_{1})\oplus\mathcal{D}(T_{2})\subset\mathcal{D}.$
Conversely, let $f\in\mathcal{D}$. Writing $f=f_{1}+f_{2}$ where each $f_{i}$
$(i=1,2)$ is given as in $($\ref{Representation1}$),$ we see that $f\in$
$\mathcal{D}(T).$ The proof of the last statement in the theorem is clear.
\end{proof}

Using Theorem \ref{CHEL}, we can further refine the domain of $T;$ we leave
the details to the reader.

\begin{corollary}
The domain of $T$ is given by
\[
\mathcal{D}(T)=\{f:[-1,1]\rightarrow%
\mathbb{C}
\mid f\in AC[-1,1];f^{\prime},f^{\prime\prime}\in AC_{\mathrm{loc}%
}(-1,1;);(1-x^{2})f^{\prime\prime\prime}\in L^{2}(-1,1)\}.
\]

\end{corollary}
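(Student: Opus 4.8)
The plan is to show that the set $\mathcal{D}$ displayed in the corollary coincides with the description of $\mathcal{D}(T)$ furnished by the preceding theorem. One inclusion costs nothing: that description is $\{f:f\in AC[-1,1],\ f^{\prime},f^{\prime\prime}\in AC_{\mathrm{loc}}(-1,1),\ (1-x^{2})f^{\prime\prime\prime}\in L^{2}(-1,1)\}$ intersected with the two extra requirements $(1-x^{2})^{1/2}f^{\prime\prime}\in L^{2}(-1,1)$ and $f^{\prime}\in L^{2}(-1,1)$, so $\mathcal{D}(T)\subseteq\mathcal{D}$ is trivial. The entire content is the reverse inclusion: for $f\in AC[-1,1]$ with $f^{\prime},f^{\prime\prime}\in AC_{\mathrm{loc}}(-1,1)$ and $(1-x^{2})f^{\prime\prime\prime}\in L^{2}(-1,1)$, one must derive first $(1-x^{2})^{1/2}f^{\prime\prime}\in L^{2}(-1,1)$ and then $f^{\prime}\in L^{2}(-1,1)$. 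By the symmetry $x\mapsto -x$ it is enough to argue on $[0,1)$, and the engine in both steps is the Hardy-type operator inequality of Theorem~\ref{CHEL}.

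\emph{Step 1 ($(1-x^{2})^{1/2}f^{\prime\prime}\in L^{2}$).} Put $g:=(1-x^{2})f^{\prime\prime\prime}\in L^{2}(-1,1)$. Since $f^{\prime\prime}\in AC_{\mathrm{loc}}(-1,1)$, for $x\in[0,1)$ one has $f^{\prime\prime}(x)=f^{\prime\prime}(0)+\int_{0}^{x}(1-t^{2})^{-1}g(t)\,dt$, hence
\[
(1-x^{2})^{1/2}f^{\prime\prime}(x)=(1-x^{2})^{1/2}f^{\prime\prime}(0)+\psi(x)\int_{0}^{x}\varphi(t)g(t)\,dt,\qquad \varphi(t):=(1-t^{2})^{-1},\ \psi(x):=(1-x^{2})^{1/2}.
\]
The first term is bounded, hence in $L^{2}[0,1)$. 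To the second I apply Theorem~\ref{CHEL} on $(0,1)$ with weight $w\equiv1$ and this $\varphi,\psi$; hypotheses (i)--(iii) are immediate (for any $c\in(0,1)$, $\varphi\in L^{2}((0,c])$, $\psi\in L^{2}([c,1))$, and the tail integrals are positive), and
\[
K(x)^{2}=\int_{0}^{x}(1-t^{2})^{-2}\,dt\cdot\int_{x}^{1}(1-t^{2})\,dt\le (1-x)^{-1}\cdot(1-x)^{2}=1-x\le 1
\]
for $x\in[\tfrac12,1)$, while $K$ is clearly bounded on $(0,\tfrac12]$. Thus $K=\sup_{x\in(0,1)}K(x)<\infty$, so the operator $B$ of Theorem~\ref{CHEL} is bounded on $L^{2}(0,1)$; in particular $\psi(x)\int_{0}^{x}\varphi g\,dt\in L^{2}(0,1)$, and therefore $(1-x^{2})^{1/2}f^{\prime\prime}\in L^{2}[0,1)$. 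The mirror argument gives the same on $(-1,0]$.

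\emph{Step 2 ($f^{\prime}\in L^{2}$).} With $(1-x^{2})^{1/2}f^{\prime\prime}\in L^{2}(-1,1)$ now available, the passage to $f^{\prime}\in L^{2}(-1,1)$ is word for word the argument in the proof of Lemma~\ref{Dirichlet -1,-1}: for $x\in[0,1)$ write $f^{\prime}(x)=f^{\prime}(0)+\int_{0}^{x}(1-t^{2})^{-1/2}\bigl((1-t^{2})^{1/2}f^{\prime\prime}(t)\bigr)\,dt$ and apply Theorem~\ref{CHEL} on $(0,1)$ with $w\equiv1$, $\varphi(t)=(1-t^{2})^{-1/2}$, $\psi\equiv1$; here $K(x)^{2}=\tfrac12(1-x)\ln\tfrac{1+x}{1-x}$, which is bounded on $[0,1)$, so $f^{\prime}\in L^{2}[0,1)$, and symmetrically on $(-1,0]$. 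Steps 1 and 2 show that every $f\in\mathcal{D}$ satisfies all the conditions defining $\mathcal{D}(T)$ in the preceding theorem, whence $\mathcal{D}\subseteq\mathcal{D}(T)$ and the two sets coincide.

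The only point that demands any care — and the one most easily mishandled — is the verification, in each invocation of Theorem~\ref{CHEL}, that the Muckenhoupt constant $K$ is finite, i.e.\ that the blow-up of $\int_{0}^{x}\varphi^{2}$ at the endpoint is exactly compensated by the decay of $\int_{x}^{1}\psi^{2}$; the elementary endpoint estimates recorded above do precisely this. Everything else is bookkeeping, which is no doubt why the details were left to the reader.
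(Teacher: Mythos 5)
Your argument is correct and is precisely the route the paper intends: the paper leaves the details to the reader but points to Theorem~\ref{CHEL}, and your two applications of that inequality (with $\varphi=(1-t^{2})^{-1}$, $\psi=(1-x^{2})^{1/2}$ to recover $(1-x^{2})^{1/2}f^{\prime\prime}\in L^{2}$, then the Lemma~\ref{Dirichlet -1,-1} argument to recover $f^{\prime}\in L^{2}$) are exactly the omitted details, with the endpoint estimates for $K$ verified correctly. Nothing to add.
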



\begin{thebibliography}{99}                                                                                               %


\bibitem {Akhiezer-Glazman}N. I. Akhiezer and I. M. Glazman, \emph{Theory of
Linear Operators in Hilbert Space, }Dover Publications, New York, 1993.

\bibitem {Albeverio and Kurasov}S. Albeverio and P. Kurasov, \emph{Singular
Perturbations of Differential Operators, }London Mathematical Society Lecture
Notes Series, vol. 271, Cambridge University Press, Cambridge, 2000.

\bibitem {Alfaro et al}M. Alfaro, M. \'{A}lvarez. de Morales, M. L. Rezola,
\emph{Orthogonality of the Jacobi polynomials with negative integer
parameters, }J. Comput. Appl. Math., 145 (2002), 379-386.

\bibitem {Alfaro et al II}M. Alfaro, M. L. Rezola, T. E. P\'{e}rez, M. A.
Pi\~{n}ar, \emph{Sobolev orthogonal polynomials: the discrete-continuous case,
}Meth. Appl. Anal. 6 (1999), 593-616.

\bibitem {Alvarez}M. \'{A}lvarez de Morales, T. E. P\'{e}rez, M. A. Pi\~{n}ar,
\emph{Sobolev orthogonality for the Gegenbauer polynomials }$\{C_{n}%
^{(-N+1/2)}\}_{n\geq0},$ J. Comput. Appl. Math. 100 (1998) 111-120.

\bibitem {Andrews-Littlejohn}G. E. Andrews and L. L. Littlejohn, \emph{A
combinatorial interpretation of the Legendre-Stirling numbers}$,$ Proc. Amer.
Math. Soc., 137(8) (2009), 2581-2590.

\bibitem {Andrews-Gawronski-Littlejohn}G. E. Andrews, W. Gawronski, L. L.
Littlejohn, \emph{The Legendre-Stirling numbers, }Discrete Math., 311(2011), 1255-1272.

\bibitem {Andrews-Egge-Gawronski-Littlejohn}G. E. Andrews, E. S. Egge, W.
Gawronski, L. L. Littlejohn, \emph{The Jacobi-Stirling numbers, }%
arXiv:1112.6111v1 [math.CO] 28 Dec 2011, submitted for publication.

\bibitem {Berezanski}J. M. Berezanski\u{\i}, \emph{Expansions in
Eigenfunctions of Selfadjoint Operators, }Translations of Mathematical
Monographs, vol. 17, American Mathematical Society, Providence, R. I., 1968.

\bibitem {Bruder}A. Bruder, \emph{Applied Left-Definite Theory: Jacobi
Polynomials, their Sobolev Orthogonality, and Self-Adjoint Operators, }Ph.D.
thesis, Baylor University, Waco, Texas, U.S.A., 2009.

\bibitem {BruderLittlejohn}A. Bruder and L. L. Littlejohn, \emph{Nonclassical
Jacobi polynomials and Sobolev orthogonality},\textit{ }Results in
Mathematics, Online: DOI 10.1007/s00025-011-0102-4, 2011, 31 pages.

\bibitem {Chisholm-Everitt}R. S. Chisholm and W. N. Everitt, \emph{On bounded
integral operators in the space of integrable-square functions, }Proc. Roy.
Soc. Edinb. (A), 69(1970/71), 199-204.

\bibitem {Chisholm-Everitt-Littlejohn}R. S. Chisholm, W. N. Everitt and L. L.
Littlejohn, \emph{An integral operator inequality with applications},\textit{
}J. of Inequal. \& Appl., 3, 1999, 245-266.

\bibitem {Chihara}T. S. Chihara, \emph{An Introduction to Orthogonal
Polynomials,} Gordon and Breach, New York, 1978.

\bibitem {Dunford}N. Dunford and J. T. Schwartz, \emph{Linear Operators, Part
II,} Wiley Interscience, New York, 1963.

\bibitem {Egge}E. S. Egge, \emph{Legendre-Stirling permutations, }European J.
Combin. 31 (2010), 1735-1750.

\bibitem {EKLWY}W. N. Everitt, K. H. Kwon, L. L. Littlejohn, R. Wellman, and
G. J. Yoon, \emph{Jacobi-Stirling numbers, Jacobi polynomials, and the
left-definite analysis of the classical Jacobi differential expression}, J.
Comput. Appl. Math., 208 (2007), 29-56.

\bibitem {Everitt-Littlejohn-Wellman}W. N. Everitt, L. L. Littlejohn, and R.
Wellman, \emph{The Sobolev orthogonality and spectral analysis of the Laguerre
polynomials }$\{L_{n}^{-k}\}$ \emph{for positive integers k, }J. Comput. Appl.
Math., 171 (2004), 199-234.

\bibitem {Gelineau-Zeng}Y. Gelineau and J. Zeng, \emph{Combinatorial
interpretations of the Jacobi-Stirling numbers, }Electron. J. Combin., 17 (2)
(2010), \#R70 (17 pages).

\bibitem {Hellwig}G. Hellwig, \emph{Differential Operators of Mathematical
Physics, }Addison-Wesley Publishers, Palo Alto, 1967.

\bibitem {Kreyszig}E. Kreyszig, \emph{Introductory Functional Analysis with
Applications, }Wiley Classics Library, John Wiley \& Sons, New York, 1989.

\bibitem {Kwon-Lee-Littlejohn}K. H. Kwon, D. W. Lee, and L. L. Littlejohn,
\emph{Sobolev orthogonal polynomials and second order differential equations
II}, Bull. Korean Math. Soc., 33(1), 1996, 135-170.

\bibitem {Kwon-Littlejohn}K. H. Kwon and L. L. Littlejohn, \emph{Sobolev
orthogonal polynomials and second-order differential equations, }Rocky
Mountain J. Math., 28(2), 1998, 547-594.

\bibitem {Littlejohn-Wellman}L. L. Littlejohn and R. Wellman, \emph{A general
left-definite theory for certain self-adjoint operators with applications to
differential equations,} J. Diff. Equations 181, 280-339, 2002.

\bibitem {Maz'ya}V. Maz'ya, \emph{Sobolev Spaces with Applications to Elliptic
Partial Differential Equations, }second revised and augmented edition,
Grundlehren der Mathematischen Wissenschaften [Fundamental Principles of
Mathematical Sciences], vol. 342, Springer, Heidelberg, 2011.

\bibitem {Muckenhoupt}B. Muckenhoupt, \emph{Hardy's inequality with weights,
}Studia Math., T. XLIV(1972), 31-38.

\bibitem {Naimark}M. A. Naimark, \emph{Linear Differential Operators II,}
Frederick Ungar Publishing Co., New York, 1968.

\bibitem {Rainville}E. D. Rainville, \emph{Special Functions, }Chelsea
Publishing Co., New York, 1960.

\bibitem {Simon}B. Simon, \emph{Trace Ideals and their Applications, }2nd
edition, Mathematical Surveys and Monographs, vol. 120, American Mathematical
Society, Providence, R.I., 2005.

\bibitem {Szego}G. Szeg\"{o}, \emph{Orthogonal polynomials, }American
Mathematical Society Colloquium Publications, vol. 23, Providence, Rhode
Island, 1978.

\bibitem {Talenti}G. Talenti, \emph{Osservazioni sopra una classe di
disuguaglianze, }Rend. Sem. Mat. Fis. Milano 39(1969), 171-185.

\bibitem {Tomaselli}G. Tomaselli, \emph{A class of inequalities, }Boll. Un.
Mat. Ital. (4)(1969), 622-631.


\end{thebibliography}
\end{document}